\documentclass[english,oneside]{smfart}
\PassOptionsToPackage{dvipsnames,svgnames}{xcolor}

\newcommand{\opn}[1]{\operatorname{#1}}

\usepackage{amsthm,mathtools,amssymb,textcomp,extarrows,bm,mleftright,graphicx,stmaryrd,scalerel,tikz,enumitem,float,booktabs,fontspec,mathspec}
\usepackage[dvipsnames,svgnames]{xcolor}
\DeclareMathAlphabet{\mathbbb}{U}{bbold}{m}{n}

\usetikzlibrary{cd}
\mleftright

\makeatletter
\AtBeginDocument{%
 \let\glb@currsize\relax
}
\makeatother

\usepackage{mathrsfs}

\ExplSyntaxOn
\NewDocumentCommand{\definealphabet}{mmmm}
 {%
  \int_step_inline:nnn { `#3 } { `#4 }
   {
    \cs_new_protected:cpx { #1 \char_generate:nn { ##1 }{ 11 } }
     {
      \exp_not:N #2 { \char_generate:nn { ##1 } { 11 } }
     }
   }
 }
\ExplSyntaxOff
\definealphabet{bb}{\mathbb}{A}{Z}
\definealphabet{cal}{\mathcal}{A}{Z}
\definealphabet{frak}{\mathfrak}{A}{z}
\definealphabet{rm}{\mathrm}{A}{z}
\definealphabet{bf}{\mathbf}{A}{z}
\definealphabet{scr}{\mathscr}{A}{Z}
\definealphabet{tt}{\mathtt}{A}{Z}
\definealphabet{ul}{\underline}{A}{z}

\newcommand{\lto}{\longrightarrow}

\newcommand{\wtilde}{\widetilde}

\newfontface\logix {logix.otf}[Scale=1.0,NFSSFamily=logix]
\DeclareSymbolFont{Logix}{TU}{logix}{m}{n}
\newcommand{\lgl}[1]{\ensuremath{\Udelimiter 4 \symLogix "#1}}

\newcommand{\OpnDblParn}{\lgl{E31B}\relax}
\newcommand{\ClsDblParn}{\lgl{E35B}\relax}

\newcommand{\pparen}[1]{\left\OpnDblParn #1\right\ClsDblParn}

\newcommand{\normd}{\lVert\uld\rVert}
\newcommand{\norm}[1]{\left\lVert#1\right\rVert}
\usepackage[pdfencoding=auto,psdextra]{hyperref}
\usepackage[nameinlink]{cleveref}
\Crefname{theorem}{Theorem}{Theorem}
\Crefname{conjecture}{Conjecture}{Conjectures}
\Crefname{lemma}{Lemma}{Lemmas}
\Crefname{definition}{Definition}{Definitions}
\Crefname{remark}{Remark}{Remarks}
\Crefname{question}{Question}{Questions}
\Crefname{proposition}{Proposition}{Propositions}
\Crefname{corollary}{Corollary}{Corollaries}
\Crefname{equation}{}{}
\Crefname{item}{}{}
\Crefname{example}{Example}{Examples}
\Crefname{proof}{Proof}{Proofs}
\crefname{introthm}{Theorem}{Theorems}
\Crefname{introthm}{Theorem}{Theorems}
\newlist{thmenum}{enumerate}{1}
\setlist[thmenum]{label=\arabic*., ref=\thetheorem~(\arabic*)}
\crefalias{thmenumi}{theorem}
\newlist{propenum}{enumerate}{1}
\setlist[propenum]{label=\arabic*., ref=\theproposition~(\arabic*)}
\crefalias{propenumi}{proposition}
\newlist{lemenum}{enumerate}{1}
\setlist[lemenum]{label=\arabic*., ref=\thelemma~(\arabic*)}
\crefalias{lemenumi}{lemma}

\AddToHook{env/theorem/before}{\par\addvspace{3pt}}
\AddToHook{env/theorem/after}{\par\addvspace{3pt}}
\AddToHook{env/example/before}{\par\addvspace{3pt}}
\AddToHook{env/example/after}{\par\addvspace{3pt}}
\AddToHook{env/definition/before}{\par\addvspace{3pt}}
\AddToHook{env/definition/after}{\par\addvspace{3pt}}
\AddToHook{env/proposition/before}{\par\addvspace{3pt}}
\AddToHook{env/proposition/after}{\par\addvspace{3pt}}
\AddToHook{env/lemma/before}{\par\addvspace{3pt}}
\AddToHook{env/lemma/after}{\par\addvspace{3pt}}
\AddToHook{env/remark/before}{\par\addvspace{3pt}}
\AddToHook{env/remark/after}{\par\addvspace{3pt}}
\AddToHook{env/corollary/before}{\par\addvspace{3pt}}
\AddToHook{env/corollary/after}{\par\addvspace{3pt}}

\usepackage[url=true,backend=biber,style=ext-alphabetic,hyperref=true,giveninits=true]{biblatex}
\makeatletter
\renewcommand*\subjclass[2][2020]{%
  \def\@subjclass{#2}%
  \@ifundefined{subjclassname@#1}{%
    \ClassWarning{\@classname}{Unknown edition (#1) of Mathematics
      Subject Classification; using '2020'.}%
  }{%
    \@xp\let\@xp\subjclassname\csname subjclassname@#1\endcsname
  }%
}
\@namedef{subjclassname@1991}{%
  \textup{1991} Mathematics Subject Classification}
\@namedef{subjclassname@2000}{%
  \textup{2000} Mathematics Subject Classification}
\@namedef{subjclassname@2010}{%
  \textup{2010} Mathematics Subject Classification}
\@namedef{subjclassname@2020}{%
  \textup{2020} Mathematics Subject Classification}
\@xp\let\@xp\subjclassname\csname subjclassname@2020\endcsname
\makeatother

\newtheorem{theorem}{Theorem}[section]
\newtheorem{example}[theorem]{Example}
\AddToHook{env/example/begin}{\crefalias{theorem}{example}}
\newtheorem{lemma}[theorem]{Lemma}
\AddToHook{env/lemma/begin}{\crefalias{theorem}{lemma}}
\newtheorem{remark}[theorem]{Remark}
\AddToHook{env/remark/begin}{\crefalias{theorem}{remark}}
\newtheorem{proposition}[theorem]{Proposition}
\AddToHook{env/proposition/begin}{\crefalias{theorem}{proposition}}
\newtheorem{definition}[theorem]{Definition}
\AddToHook{env/definition/begin}{\crefalias{theorem}{definition}}

\AddToHook{env/conjecture/begin}{\crefalias{theorem}{conjecture}}
\newtheorem{corollary}[theorem]{Corollary}
\AddToHook{env/corollary/begin}{\crefalias{theorem}{corollary}}

\AddToHook{env/question/begin}{\crefalias{theorem}{question}}
\newtheorem{assumption}[theorem]{Assumption}
\AddToHook{env/assumption/begin}{\crefalias{theorem}{assumption}}

\newtheorem{introthm}{Theorem}

\Crefformat{enumi}{#2\textup{(#1)}#3}
\usepackage[verbose=silent]{microtype}
\usepackage{geometry}

\numberwithin{equation}{section}

\usepackage{orcidlink}

\title{The order types of the supports of $p$-adic algebraic Hahn series}

\usepackage{orcidlink}
\author{Shanwen Wang\orcidlink{0000-0003-0228-1208}}
\address{School of Mathematics, Renmin University of China, No. 59 Zhongguancun Street, Haidian District, Beijing, 100872, China}
\address{Bigdata and Responsible Artificial Intelligence for National Governance, Renmin University of China, No. 59 Zhongguancun Street, Haidian District, Beijing, 100872, China}
\email{s\_wang@ruc.edu.cn}
\urladdr{https://shanwenwang.github.io/}
\author{Yijun Yuan\orcidlink{0000-0001-6571-6980}}
\address{Institute for Theoretical Sciences, Westlake University, No. 600 Dunyu Road, Sandun town, Xihu district, Hangzhou, Zhejiang Province, 310030, China}
\email{941201yuan@gmail.com}
\urladdr{https://yijunyuan.github.io/}
\begin{abstract} 
Let $p$ be a prime number. In 1986, David Lampert asked if one expands a $\bfQ_p$-algebraic number as a $p$-adic Hahn series, then what is the possible order type of its support? In 2001, Kedlaya made a vague conjecture that the possible ordinals can only be finite ordinals, $\omega$ and $\omega^\omega$, where $\omega$ is the first infinite ordinal. In this article, we solve Lampert's problem by confirming Kedlaya's intuition. Moreover, we completely characterize the $\bfQ_p$-algebraic numbers with order type of support at most $\omega$. As an application, we determine the bi-algebraic set of the $p$-adic shadow map.

\end{abstract}

\usepackage{letltxmacro}
\LetLtxMacro\oldttfamily\ttfamily
\DeclareRobustCommand{\ttfamily}{\oldttfamily\csname ttsize\endcsname}

\begin{document}\def\ttsize{\footnotesize}
\subjclass{11J61, 11J81, 11D88, 68V20}
\keywords{$p$-adic transcendence, $p$-adic Hahn series, order type, formalization}
\maketitle
\tableofcontents

\section{Introduction}

\subsection{Notations and conventions}

Let $p$ be a prime number. Let $\bfF_p$ be the finite field with $p$ elements, and $\overline{\bfF}_p$ be an algebraic closure of $\bfF_p$. Let $\bfQ_p$ be the field of $p$-adic numbers, $\overline{\bfQ}_p$ be an algebraic closure of $\bfQ_p$, $\breve{\bfZ}_p=W(\overline{\bfF}_p)$ be the ring of Witt vectors of $\overline{\bfF}_p$, and $\breve{\bfQ}_p=\breve{\bfZ}_p[p^{-1}]$ which is the completed maximal unramified extension of $\bfQ_p$ with ring of integers $\breve{\bfZ}_p$, ${\bf Q}_p^{\rm un}$ be the maximal unramified extension of ${\bf Q}_p$ and $\bfC_p=\widehat{\overline{\bfQ}}_p$ be the field of $p$-adic complex numbers. We normalize the $p$-adic valuation on $\bfC_p$ and its subfields by $v_p(p)=1$.

Let $\bfL_p\coloneqq\breve{\bfZ}_p\pparen{p^\bfQ}$ (resp. $\bfL_p^\flat\coloneqq\overline{\bfF}_p\pparen{t^\bfQ}$ ) be the field of \textbf{$p$-adic Hahn series} (resp.\textbf{equal-characteristic Hahn series}),
whose elements are the formal sums $f=\sum_{q\in\bfQ}[r_q]p^q$ (resp. $f=\sum_{q\in\bfQ}r_qt^q$) with $r_q\in\overline{\bfF}_p$, $[\cdot]\colon \overline{\bfF}_p\lto \breve{\bfQ}_p$ the Teichm\"uller lift, and well-ordered support $\opn{Supp}(f)=\{q\in\bfQ\colon r_q\neq0\}$; we refer to \Cref{sec:hahn-series} for the precise definitions. For any $f=\sum_{q\in\bfQ}[r_q]p^q\in\bfL_p$ (resp. $f=\sum_{q\in\bfQ}r_q t^q\in\bfL_p^\flat$), we define the coefficient function
$$\opn{Coeff}(f,\cdot)\colon \bfQ\lto\overline{\bfF}_p,\ q\mapsto r_q.$$ By a theorem of Poonen \cite{poonenMAXIMALLYCOMPLETEFIELDS1993}, the field $\bfC_p$ of $p$-adic complex numbers embeds isometrically into $\bfL_p$. We fix such an embedding, and for every rational number $q=a/b\in\bfQ$ with $(a,b)\in\bfZ\times\bfZ_{>0}$, we will use the notation $p^q$ to denote the unique root of the polynomial $X^b-p^a\in\bfQ_p[X]$ with image $p^q\in\bfL_p$ under this embedding. Besides that, for any $f\in\bfL_p$ and any rational number $R\in\bfQ$, write
    $$f_{\leq R}\coloneqq\sum_{q\in\bfQ\cap (-\infty,R]} [r_q]p^q$$
    to be the truncation of $f$ at $R$. 
    
    For a well-ordered subset $S$ of $\bfQ$, let $\operatorname{ot}(S)$ denote
its order type for the usual order on $\bfQ$. We write $\omega$ for the
first infinite ordinal; powers of $\omega$ are ordinal powers.

\subsection{Order types of the supports of $\bfQ_p$-algebraic Hahn series}
Since every $p$-adic algebraic number $f\in\overline{\bfQ}_p$ has a unique expansion as a $p$-adic Hahn series, it is natural to ask how the algebraicity of $f$ is reflected in the shape of its support. In 1986, Lampert (cf. \cite{lampertAlgebraicPadicExpansions1986}) asked which ordinals occur as the order type $\opn{ot}(\opn{Supp}(f))$ of the support of a $\bfQ_p$-algebraic element $f$. Kedlaya proved that this order type is at most $\omega^\omega$ (cf. \cite[Section 4]{kedlayaPowerSeriesPAdic2001}), and expressed the expectation that the only possible values are the finite ordinals, $\omega$ and $\omega^\omega$. The main result of this article confirms this expectation.

\begin{introthm}\label{thm:main}
    Let $f\in\bfL_p$ be algebraic over $\bfQ_p$. Then the order type of the support of $f$ is either finite, $\omega$, or $\omega^\omega$.
\end{introthm}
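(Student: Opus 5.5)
The argument splits into three steps, organised around Kedlaya's bound. By \cite[Section 4]{kedlayaPowerSeriesPAdic2001}, for $f$ algebraic over $\bfQ_p$ we have $\opn{ot}(\opn{Supp}(f))\le\omega^\omega$. It therefore suffices to show that the order type cannot lie strictly between $\omega$ and $\omega^\omega$, that is, it cannot lie in $[\omega+1,\omega^\omega)$. I would prove a dichotomy. Either $\opn{Supp}(f)$ has order type at most $\omega$; in that case it is a finite set or a sequence tending to $+\infty$ or to a finite limit, and this situation should be characterised separately. Or else $\opn{Supp}(f)$ contains, for every $n$, a subset of order type $\omega^n$ in a tail. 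In the second case, any tail of a well-ordered set of type $<\omega^\omega$ has type $\omega^{n_1}c_1+\dots$, and type at least $\omega^n$ for all $n$ inside such a set forces type exactly $\omega^\omega$. I would also use that the class of $f$ is stable under $f\mapsto af+b$ with $a,b\in\overline{\bfQ}_p$ of finite support. Such maps change the order type only by finite amounts and preserve the ``$\omega^\omega$-tail'' property, which lets me normalise $f$.

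\textbf{Step 1 (accumulation points).} Suppose $\opn{ot}(\opn{Supp}(f))>\omega$. Then the support has a first accumulation point $R\in\bfQ\cup\{\text{irrational}\}$, i.e.\ the first limit of an $\omega$-sequence in the support that is followed by further terms. I would show $R$ is rational and $f_{\le R}$ is algebraic. The idea is that $f_{<R}$ is an $\omega$-sum and should be obtained via a Kedlaya-type description by Artin--Schreier-like recursions $x^p-x=\ldots$ with exponents $p^{-k}$. The essential case is $\sum_k [c]p^{\,a-b p^{-k}}$-type series, which are algebraic only in controlled situations. Next, I would subtract the algebraic truncation, $g=f-f_{<R}$, and study $g$, whose support starts at or after $R$.

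\textbf{Step 2 (self-similarity / bootstrapping).} This is the key step. If $f$ satisfies $P(f)=0$ and the support of $f$ has type $>\omega$, I would consider the minimal polynomial over $\bfQ_p(p^{1/N})$ and apply a Newton-polygon/Frobenius argument. The tail $g$ satisfies a polynomial equation whose dominant part, after rescaling by $p^{-R}$ and reducing to the residue level, is of Artin--Schreier type $x^p-x=h$. Here $h$ has support of type $\omega$, arising from the $\omega$-sequence accumulating at $R$. Solving $x^p-x=h$ in the Hahn field by iterating $x=-\sum h^{p^k}$ or $x=\sum h^{p^{-k}}$ raises the order type from $\omega^m$ to $\omega^{m+1}$, because the $p^{-k}$-powers of an $\omega$-type support produce a new limit layer. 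Inducting, I would get $\omega^n$ inside the support for all $n$. The nonlinear terms in characteristic $0$ (carries among Teichmüller digits) must be shown not to cancel these layers; I would handle them by comparing with the tilt in $\bfL_p^\flat$, using that the order type of supports of truncations is controlled by the equal-characteristic picture modulo higher-valuation corrections.

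\textbf{Main obstacle.} The hardest point is in Step 2: showing that the layer cannot terminate, i.e.\ that an algebraic element cannot have type $\omega\cdot k$ or $\omega^2$. This requires proving the nonvanishing of the new accumulating coefficients at each layer. My first attempt would be a valuation argument: if the $(m+1)$-st layer vanished, then $f$ would coincide with an element of a finite extension generated by $p^{1/N}$ and finitely many Artin--Schreier roots of bounded depth. Such elements have supports of type at most $\omega$ by the characterisation in the first case. This contradicts the existence of the first accumulation point that is followed by further support.
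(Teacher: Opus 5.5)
Your opening reduction (Kedlaya's bound $\opn{ot}(\opn{Supp}(f))\leq\omega^\omega$, so it suffices to exclude $\omega<\opn{ot}(\opn{Supp}(f))<\omega^\omega$) matches the paper. The mechanism you propose for the exclusion, however, breaks down already in Step~1. If $R$ is the first accumulation point, then $\opn{Supp}(f_{<R})$ is exactly the set of the first $\omega$ points of $\opn{Supp}(f)$, so it is infinite and contained in $[v_p(f),R)$. A $\bfQ_p$-algebraic Hahn series with bounded support has finite support (\Cref{thm:20524}; the model case $\sum_{k\geq1}p^{-1/p^k}$ is the transcendental example recalled in the introduction). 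Hence $f_{<R}$ and $f_{\leq R}$ are \emph{always} transcendental over $\bfQ_p$. Then $g=f-f_{<R}$ satisfies no polynomial relation over $\bfQ_p$, and Step~2 has nothing to work with. The heuristic behind Step~1, that $f_{<R}$ arises from Artin--Schreier recursions, is Kedlaya's QTR picture in $\bfL_p^\flat$ transported through $\Theta$, and $\Theta$ does not preserve algebraicity.

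Step~2 fails for the same reason at a deeper level. The claim that solving $x^p-x=h$ with $h$ of type $\omega^m$ produces type $\omega^{m+1}$ relies on $x\mapsto x^p$ being additive, so that it acts by $\sum a_qt^q\mapsto\sum a_q^pt^{pq}$. In $\bfL_p$ the $p$-th power map is not additive and does not rescale supports. You also give no reason why a minimal polynomial over $\bfQ_p$ should ``reduce at the residue level'' to such an equation governing the support.

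More decisively, a layer-building argument of this shape would apply verbatim in $\bfL_p^\flat$, where its conclusion is false. For $p$ odd, $x^2$ with $x^p-x=t^{-1}$ is algebraic with support of type exactly $\omega^2$. Its truncation below the first accumulation point $-1/p$ is $t^{-1/p}(2x-t^{-1/p})$, which is algebraic, so your Step~1 even holds there. The characteristic-$0$ effect is therefore not a side issue of ``carries not cancelling the layers''. It is the whole content, and it acts in the opposite direction: it obstructs any finite positive rank rather than generating new layers.

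Your ``main obstacle'' paragraph is also circular. The claim that elements built from finitely many Artin--Schreier roots have support of type $\leq\omega$ is unproved, and it amounts to \Cref{thm:intro-omega}, which the paper deduces from the main theorem.

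The missing idea is the paper's approximation argument:
\begin{enumerate}
\item Assume the essential rank $r=\bfr(f)$ is at least $1$ and take $B>\bfB(f)$. The QTR structure of $g=f_{\leq B}$ lets one slide $r$ digit patterns along $nr$ widely spaced blocks.
\item For suitable such placements, a coset-aggregate of the coefficients of $P(g)$ equals the carry-free sum $\calS_n(\ulu_z)=\sum\prod_iA_{z|_{I_i}}$ over interleavings.
\item A Nullstellensatz argument produces some $z$ with $\calS_n(\ulu_z)\neq0$ and $v_p(\calS_n(\ulu_z))\leq n(\bfB(f)+1)+C(n,r)$, a bound independent of $B$. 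This is where characteristic $0$ enters: interleaving counts are positive integers.
\item This bounds $v_p(P(f_{\leq B}))$ uniformly in $B$, contradicting $v_p(P(f_{\leq B}))>B$ for large $B$.
\end{enumerate}
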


\Cref{thm:main} is a genuinely mixed-characteristic phenomenon. In the equal-characteristic field of Hahn series $\overline{\bfF}_p\pparen{t^\bfQ}$, Kedlaya \cite{kedlayaAlgebraicClosurePower2001} gave a complete description of the Hahn series which are algebraic over $\overline{\bfF}_p\pparen{t}$ (cf. \Cref{prop:54845}), and the intermediate order types do occur: for $p$ odd, the Hahn series $x=\sum_{k\geq1}t^{-1/p^k}$ is a solution of 
the Artin-Schreier equation $x^p-x=t^{-1}$, and the support of
the square $x^2$ has support of order type $\omega^2$: $\opn{Supp}(x^2)=\{-p^{-i}-p^{-j}\colon 1\leq i\leq j\}$.

The first consequence of \Cref{thm:main} is the following result, which was conjectured in \cite{wang2026padichahnseriessparse} as a weak version of \Cref{thm:main}.

\begin{introthm}[cf. {\Cref{prop:19985}}]\label{thm:20524}
    If $f\in\bfL_p$ is algebraic over $\bfQ_p$ and has bounded support, then its support is finite.
\end{introthm}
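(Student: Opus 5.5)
The plan is to deduce \Cref{thm:20524} from \Cref{thm:main} by a short order-theoretic argument that uses the special shape of $\omega^\omega$. The point is that a bounded support of order type $\omega^\omega$ (or $\omega$) forces infinitely many exponents, or a limit of exponents, below some bound. This is what we exploit, together with the fact that the conjugates, the truncations and the rescalings of $f$ all stay algebraic.

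\textbf{Step 1 (reduction).} Let $f$ be algebraic with $\opn{Supp}(f)\subseteq(-\infty,R]$ and suppose the support is infinite. By \Cref{thm:main}, $\opn{ot}(\opn{Supp}(f))\in\{\omega,\omega^\omega\}$. Both ordinals are limit ordinals, so the support has no maximum, and it has a supremum $s\le R$ that is not attained.

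\textbf{Step 2 (transfer to a series with an accumulation point).} I would use the Galois-theoretic structure of $\overline{\bfQ}_p$ inside $\bfL_p$. Consider $g=f-f_{\le q_0}$ for $q_0$ in the support. It is not obviously algebraic, since truncations need not be algebraic, so instead I would argue directly on $f$. Since $f$ is algebraic, it lies in a finite extension $K/\bfQ_p$. We may write $f=\sum_{i<e}c_i\pi^{i}$ with $c_i\in K_0$ unramified (complete), and $\pi$ a uniformizer of the totally ramified part. The key input here is a lemma, presumably proved in the body alongside \Cref{thm:main}, that the supports of elements of a finite extension have exponents with bounded denominators modulo the contributions coming from the finitely many ramification breaks. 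Failing that, I would directly use Kedlaya's description: the exponents in $\opn{Supp}(f)$ accumulating at $s$ come from $p$-power denominators of the form $s-\sum p^{-k}$-type sequences, and these arise in the Witt-vector carries of the expansion.

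\textbf{Step 3 (contradiction via valuation).} Choose the rescaling $h=p^{-s}\cdot(\text{suitable algebraic element})$ so that $s$ becomes $0$, and compare $h$ with its reduction. An algebraic element of $\overline{\bfQ}_p$ with all exponents in $(-\infty,0)$ accumulating to $0$ would have a residue field that is not finite over $\overline{\bfF}_p$ in the corresponding $t$-adic tilt. Alternatively, it would produce elements of arbitrarily small positive valuation in a fixed finite extension, because differences of the partial approximants have valuations tending to $0$. That contradicts the discreteness of the valuation on a finite extension of $\bfQ_p$.

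The main obstacle is Step 2–3: making rigorous that the approximants live in a fixed finite extension, since truncations of algebraic Hahn series are not algebraic in general. What I would try first is to prove by induction on the order type the following claim: if $\opn{Supp}(f)\cap(s-\epsilon,s)$ is infinite for every $\epsilon>0$, then the denominators of the exponents are unbounded. Then I would use the result, from the paper's Kedlaya-type analysis, that the support of an element of a finite extension $K$ of degree $d$ has denominators dividing $d\cdot p^N$ below any given height. This would reduce the problem to a discrete-valuation contradiction.
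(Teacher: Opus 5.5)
Your Step 1 is fine, but Steps 2 and 3 are not a proof, and the route they sketch cannot be completed as written.

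\begin{itemize}
\item The discreteness argument needs approximants of $f$ lying in a fixed finite extension $K$. As you note yourself, truncations of $f$ need not be algebraic, let alone lie in $K$.
\item The lemma on ``bounded denominators modulo ramification breaks'' is not in the paper.
\item $s=\sup\opn{Supp}(f)$ need not be rational, so $p^{-s}$ is not defined.
\item The remark about the residue field of a tilt has no precise meaning.
\item Most seriously, the result you finally want to import is circular. It says that elements of a degree-$d$ extension have exponents with denominators dividing $dp^N$ below any height. For $f$ with bounded support this gives $\opn{Supp}(f)\subseteq\frac{1}{dp^N}\bfZ$. A well-ordered subset of $\frac{1}{dp^N}\bfZ$ that is bounded above is finite, so this is just a restatement of the theorem. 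For general algebraic $f$ it would say that every truncation $f_{\leq B}$ has finite support, i.e.\ $\opn{ot}(\opn{Supp}(f))\leq\omega$ always. Nothing in the paper provides that, and \Cref{thm:main} leaves room for $\omega^\omega$.
\end{itemize}

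A bounded support accumulating at its supremum is exactly the shape of $\sum_{k\geq1}p^{-1/p^k}$, whose transcendence is a nontrivial theorem. Excluding such shapes for algebraic $f$ is the hard content of \Cref{lem:finite-rank-exclusion}. You already granted that content when you invoked \Cref{thm:main}, so you should not try to redo it.

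The missing idea is a one-liner, and your Step 1 already isolates it. An infinite order type in $\{\omega,\omega^\omega\}$ is a limit ordinal, meaning the support has no maximum, so \emph{create} a maximum.

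\begin{itemize}
\item Take an integer $R$ with $\opn{Supp}(f)\subseteq(-\infty,R]$. Then $f+p^{R+1}$ is still algebraic over $\bfQ_p$.
\item Since $R+1$ exceeds every exponent of $f$, the formal sum of the standard expansion of $f$ and the single term $[1]p^{R+1}$ already has Teichm\"uller coefficients and well-ordered support. By uniqueness it is the standard expansion of $f+p^{R+1}$; no carries occur.
\item Hence $\opn{Supp}(f+p^{R+1})=\opn{Supp}(f)\cup\{R+1\}$ has order type $\opn{ot}(\opn{Supp}(f))+1$, a successor ordinal.
\item If $\opn{Supp}(f)$ were infinite, this would be an infinite successor ordinal. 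It would then be neither finite, nor $\omega$, nor $\omega^\omega$, contradicting \Cref{thm:main}.
\end{itemize}

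This is exactly the paper's proof. The paper phrases the last step as follows: the successor ordinal is $\leq\omega^\omega$ by \Cref{prop:external-bounds}, hence $<\omega^\omega$, hence $\leq\omega$ by \Cref{lem:finite-rank-exclusion}, hence finite.
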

Secondly, Krasner's lemma allows us to identify the algebraic elements whose support has order type at most $\omega$.%

\begin{introthm}[cf. {\Cref{prop:57854}}]\label{thm:intro-omega}
    Let $f\in\bfL_p$ be algebraic over $\bfQ_p$. Then $\opn{ot}(\opn{Supp}(f))\leq\omega$ if and only if $f\in\bigcup_{T=1}^\infty\bfQ_p^{\opn{un}}\left(p^{1/T}\right)$.
\end{introthm}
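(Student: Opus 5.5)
We prove the two implications separately. The key ingredients are: ordered sums of Teichmüller lifts with bounded denominators can be controlled, and Krasner's lemma can be applied to truncations.

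\emph{The easy direction.} Suppose $f\in\bfQ_p^{\opn{un}}(p^{1/T})$ for some $T\geq1$. The extension $\bfQ_p^{\opn{un}}(p^{1/T})/\bfQ_p^{\opn{un}}$ is totally ramified. A uniformizer of its completion is $p^{1/T}$, and its residue field is $\overline{\bfF}_p$. Every element of the completion $\breve{\bfQ}_p(p^{1/T})$ can therefore be written uniquely as
\[
\sum_{n\geq n_0}[r_n]p^{n/T},\qquad r_n\in\overline{\bfF}_p,
\]
by successive approximation using Teichmüller representatives. Because the embedding into $\bfL_p$ is isometric and $p^{1/T}$ is sent to $p^{1/T}$, this convergent expansion is precisely the Hahn expansion, by uniqueness of Hahn expansions of convergent sums of this shape. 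Its support lies in $\frac1T\bfZ\cap[n_0/T,\infty)$, which has order type at most $\omega$.

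\emph{The converse.} Suppose $f$ is algebraic over $\bfQ_p$ and $\opn{ot}(\opn{Supp}(f))\leq\omega$. If the support is finite, then $f$ is a finite sum $\sum[r_q]p^q$. Each $[r_q]$ is algebraic over $\bfQ_p$: it is the Teichmüller lift of an element of a finite field, hence a root of unity. So $f$ lies in $\bfQ_p^{\opn{un}}(p^{1/T})$, where $T$ is the lcm of the denominators. If the support is infinite, write it as $q_0<q_1<\cdots$.

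\textbf{Step 1: the support is unbounded.} If it were bounded, \Cref{thm:20524} would make it finite. Hence $q_n\to\infty$.

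\textbf{Step 2: Krasner's lemma on a truncation.} Let $d=[\bfQ_p(f):\bfQ_p]$, and let $\delta$ be the Krasner radius of $f$, namely $\max_{\sigma f\neq f}v_p(\sigma f-f)$ over conjugates. By Step 1, we can choose $N$ with $q_{N+1}>\delta$. The truncation $g=f_{\leq q_N}$ satisfies $v_p(f-g)=q_{N+1}>\delta$. By Krasner's lemma, $\bfQ_p(f)\subseteq\bfQ_p(g)$, and $g\in\bfQ_p^{\opn{un}}(p^{1/T_N})$ by the finite-support case. So $f\in\bfQ_p^{\opn{un}}(p^{1/T_N})$, which gives the claimed inclusion.

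The subtle point is the application of Krasner over the base $\bfQ_p$. The lemma requires $f$ to be closer to $g$ than to all its own conjugates, which is exactly the choice of $N$. It then gives $\bfQ_p(f)\subseteq\bfQ_p(g)$. Here $g$ is algebraic, being a finite sum, and lies in a finite extension, so the standard Krasner statement applies.

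\textbf{Main obstacle.} The real difficulty is Step 1, and it is entirely absorbed by \Cref{thm:20524}. Without it, an order-type-$\omega$ support could accumulate at a finite point, with denominators growing unboundedly. In that case no truncation would approximate $f$ beyond its Krasner radius, and the argument would fail.
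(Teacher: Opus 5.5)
Your proof is correct and follows essentially the same route as the paper. The finite case is handled directly. Unboundedness of an infinite support comes from the bounded-implies-finite result (\Cref{prop:19985}). Krasner's lemma is then applied to a truncation $f_{\leq m}$ lying closer to $f$ than any conjugate of $f$. For the easy direction, the paper works in the finite extension $\opn{Frac}W(\bfF_{p^s})(p^{1/T})$ rather than in the completion $\breve{\bfQ}_p(p^{1/T})$, but this difference is immaterial.
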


\subsection{Application}

Our application of \Cref{thm:main}  concerns the \textbf{shadow map}
$$\Theta\colon\bfL_p^\flat\lto\bfL_p,\ \sum_{q\in\bfQ}a_qt^q\longmapsto\sum_{q\in\bfQ}[a_q]p^q.$$
It is a set-theoretic bijection which maps $\bfF_p\pparen{t}$ onto $\bfQ_p$. A variant (cf. \Cref{prop:25416}) of \cite[Theorem 13.4]{kedlayaAlgebraicityGeneralizedPower2017b} says that $\Theta$ induces a homeomorphism between the completed algebraic closure of $\bfF_p\pparen{t}$ in $\bfL_p^\flat$ and $\bfC_p$. Without completions, however, $\Theta$ does not preserve algebraicity: in \cite{wang2025padictranscendencesumk1inftyp1pk} we showed that $\sum_{k\geq1}p^{-1/p^k}$ is transcendental over $\bfQ_p$, although its preimage $\sum_{k\geq1}t^{-1/p^k}$ is algebraic over $\bfF_p\pparen{t}$. Combining \Cref{thm:main} and \Cref{thm:intro-omega} with a result of Lisinski (cf. \cite[Theorem 4.5]{lisinskiApproximationAlgebraicityPositive2023}), we determine exactly where $\Theta$ preserves algebraicity.

\begin{introthm}[cf. {\Cref{prop:bialgebraic}}]\label{thm:intro-bialg}
    Let $f\in\bfL_p^\flat$ be a Hahn series. Then the following are equivalent:
    \begin{enumerate}
        \item $f$ is algebraic over $\bfF_p\pparen{t}$ and $\Theta(f)$ is algebraic over $\bfQ_p$;
        \item $\Theta(f)$ belongs to the field $\bigcup_{T=1}^\infty\bfQ_p^{\opn{un}}\left(p^{1/T}\right)$;
        \item $\Theta(f)$ is algebraic over $\bfQ_p$ and $\opn{ot}(\opn{Supp}(\Theta(f)))\leq \omega$.
    \end{enumerate}
\end{introthm}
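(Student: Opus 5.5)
We prove \Cref{thm:intro-bialg} by establishing the cycle $(2)\Rightarrow(1)\Rightarrow(3)\Rightarrow(2)$. Two of the three links are immediate from the results stated above, and the real content is an explicit description of $\Theta^{-1}$ on the field $\bigcup_T\bfQ_p^{\opn{un}}(p^{1/T})$.

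\textbf{(3)$\Rightarrow$(2).} This is exactly \Cref{thm:intro-omega} applied to $\Theta(f)$, which is algebraic by hypothesis.

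\textbf{(1)$\Rightarrow$(3).} By \Cref{thm:main}, $\opn{ot}(\opn{Supp}(\Theta(f)))$ is finite, $\omega$, or $\omega^\omega$. Since $\Theta$ preserves supports, $\opn{Supp}(\Theta(f))=\opn{Supp}(f)$, so it suffices to exclude $\omega^\omega$. Here we invoke Kedlaya's description (\Cref{prop:54845}), or Lisinski's \cite[Theorem 4.5]{lisinskiApproximationAlgebraicityPositive2023}, to conclude that the support of an element of $\bfL_p^\flat$ algebraic over $\bfF_p\pparen{t}$ has order type strictly less than $\omega^\omega$. Kedlaya's twist-recurrent supports lie in sets of the form $\{(m-b_1p^{-1}-\dots-b_kp^{-k})/a\}$ with digit sums bounded, and sets of this shape have order type at most $\omega^{N}$ for some finite $N$. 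Combining the two facts leaves only finite or $\omega$, which is (3).

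\textbf{(2)$\Rightarrow$(1).} This is the main step: for $g=\Theta(f)\in\bfQ_p^{\opn{un}}(p^{1/T})$ we must show that $f=\Theta^{-1}(g)$ is algebraic over $\bfF_p\pparen{t}$; algebraicity of $g$ is clear. The plan is as follows.
\begin{enumerate}
\item Since $\bfQ_p^{\opn{un}}(p^{1/T})$ is complete-free but totally ramified of degree $T$ over a discretely valued unramified field, write $g$ uniquely as $g=\sum_{j=0}^{T-1}g_jp^{j/T}$ with $g_j\in K(\text{finite unramified})\subset \bfQ_{p^n}$.
\item Each $g_j$ has a Teichmüller expansion $\sum_{m\ge m_0}[c_{j,m}]p^m$ with $c_{j,m}\in\bfF_{p^n}$. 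Then the Hahn expansion of $g$ is $\sum_{j,m}[c_{j,m}]p^{m+j/T}$. This is a direct rewriting, since $\{m+j/T\}$ are distinct and we use the fixed embedding, in which $p^{j/T}$ maps to the monomial $p^{j/T}$.
\item Consequently $f=\sum_j t^{j/T}\sum_m c_{j,m}t^m=\sum_j t^{j/T}h_j(t)$ with $h_j\in\bfF_{p^n}\pparen{t}$, which is algebraic over $\bfF_p\pparen{t}$ because $\bfF_{p^n}$ and $t^{1/T}$ are.
\end{enumerate}

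The subtle point, and the expected main obstacle, is justifying step 2. We must check that the coefficients $c_{j,m}$ genuinely lie in a finite field; this holds because $g_j$ lies in a finite unramified extension, so its Teichmüller digits lie in its residue field. We must also check that the Hahn expansion is compatible with the chosen embedding of $p^{1/T}$, which we will take care to verify.
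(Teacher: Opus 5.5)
Your proposal is correct and follows the paper's proof. You prove (1)$\Rightarrow$(3) by combining the Lisinski/Kedlaya bound $\opn{ot}(\opn{Supp}(f))<\omega^\omega$ with the finite-rank exclusion (which you reach through \Cref{thm:main}), (3)$\Rightarrow$(2) by \Cref{thm:intro-omega}, and (2)$\Rightarrow$(1) by reading off the standard expansion $\sum_i[a_i]p^{i/T}$ with digits in a finite field $\bfF_{p^n}$, so that $f$ is a Laurent series in $t^{1/T}$ over $\bfF_{p^n}$. The only cosmetic differences are that you arrange the implications as a cycle, so you need only one direction of \Cref{thm:intro-omega}, and that you spell out the expansion via the basis $1,p^{1/T},\ldots,p^{(T-1)/T}$ where the paper simply cites the uniformizer and residue field.
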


\subsection{Organization of the paper}
In \Cref{sec:prelim}, we recall the fields of Hahn series, Kedlaya's algebraicity criterion in terms of QTR functions, Kedlaya's theorem on the shadow map and provide the first reduction of the proof of our main theorem by introducing the notion of essential rank and essential truncation point. In \Cref{sec:bounded-identity}, for a $\bfQ_p$-algebraic $p$-adic algebraic Hahn series $f$ with minimal polynomial $P(X)\in \bfZ_p[X]$, we provide a uniform bound for the $p$-adic valuation of $P(f_{\leq B})$ with $B$ greater than the essential truncation point (cf. \Cref{prop:uniform-polynomial-bound}). In \Cref{sec:main}, we deduce \Cref{thm:main} together with its refinement and its application, namely \Cref{thm:20524}(\Cref{prop:19985}), \Cref{thm:intro-omega} (\Cref{prop:57854}) and \Cref{thm:intro-bialg} (\Cref{prop:bialgebraic}).

\subsection{Lean 4 formalization}
All the main results of this paper (cf. \Cref{thm:main}, \Cref{thm:20524}, \Cref{thm:intro-omega} and \Cref{thm:intro-bialg}) have been formalized in Lean 4 (cf. \cite{mouraLean4TheoremProver2021}, \href{https://github.com/leanprover/lean4/releases/tag/v4.33.0}{version 4.33.0}) using the mathlib4 library (cf. \cite{mathlib2020}, \href{https://github.com/leanprover-community/mathlib4/commit/db584cd6d46c92f209a44c0f1c829460d327499d}{commit \texttt{db584cd}}). The code is available at \url{https://github.com/YijunYuan/PAdicOrderType/}.

The formalization is \texttt{sorry}-free and depends only on the standard axioms \texttt{Classical.choice}, \texttt{propext} and \texttt{Quot.sound} of Lean. Moreover, it passes the \href{https://github.com/leanprover/comparator}{Lean Comparator} challenge.
\begin{remark}
    This paper relies on three results (cf. \Cref{prop:54845}, \Cref{thm:47} and \Cref{prop:external-bounds}) taken from Kedlaya's papers \cite{kedlayaAlgebraicClosurePower2001}, \cite{kedlayaPowerSeriesPAdic2001} and \cite{kedlayaAlgebraicityGeneralizedPower2017b}. To keep the formalization entirely \texttt{sorry}-free, we have formalized these three results in Lean 4 as a standalone project, \texttt{TrustworthyKedlaya}, available at \url{https://github.com/YijunYuan/TrustworthyKedlaya}; the formalization of this paper uses \texttt{TrustworthyKedlaya} as a library.

    In addition, we would like to emphasize the following aspects of the formalization:
    \begin{enumerate}
        \item The formal definitions of $\bfL_p$, $\breve{\bfQ}_p$ and $\bfQ_p^{\opn{un}}$ are contained in \texttt{TrustworthyKedlaya}.
        \item The statements of the results from Kedlaya's papers are specialized to the cases of $\bfL_p$ and $\bfL_p^\flat$.
    \end{enumerate}
\end{remark}
\begin{remark}
    We also reuse a few results from our earlier work \cite{wang2026padichahnseriessparse}. To make the code more self-contained, we have copied and merged part of the formalization of \cite{wang2026padichahnseriessparse} into this project; the original formalization is available at \url{https://github.com/YijunYuan/FormalizedSparse/tree/4.33.0}.
\end{remark}

The formalization was carried out by \href{https://github.com/frenzymath/Archon-Horizon}{\texttt{Archon-Horizon}}, an agentic auto-formalization system, together with \href{https://github.com/frenzymath/SHEAF}{\texttt{SHEAF}} (Scalable Hierarchical Engine for Autonomous Formalization), a fully automated formalization framework for Lean 4. Both systems are developed by the AI4Math team at BICMR, Peking University. As base models for these systems, we used \texttt{GPT-6 Astra} (cf. \cite{GPT6Astra}) from OpenAI, as well as \texttt{Claude Fable 5} (cf. \cite{Fable5}) and \texttt{Claude Fable 5.1} (cf. \cite{Fable5-1}) from Anthropic.

\subsection*{Declaration of AI usage}
We use LLMs to polish the English writing of this paper, to check the mathematical statements and proofs, and to assist in the Lean 4 formalization. 

\subsection*{Acknowledgement}
The research is supported by the National Key R\&D Program of China (Grant No. 2024YFA1014000).

\section{Preliminaries}\label{sec:prelim}
\subsection{Hahn series and $p$-adic Hahn series}\label{sec:hahn-series}
Let $R$ be a commutative ring and $G$ be an ordered group. For any $f\in\opn{Hom}_{\opn{Set}}(G,R)$, we define the \textbf{support} of $f$ to be $$\opn{Supp}(f)=\{g\in G\colon f(g)\neq 0\}.$$
The set of \textbf{Hahn series} over $R$ with value group $G$ is defined to be
              $$R\pparen{G}\coloneqq \{f\in \opn{Hom}_{\opn{Set}}(G,R)\colon \opn{Supp}(f) \text{ is well-ordered}\}.$$
              By introducing a formal variable $t$, elements of $R\pparen{G}$ will also be written as $\sum_{g\in G}r_gt^g$, where $r_g\in R$ for all $g\in G$.
 With identity $1\cdot t^0$, and with addition and multiplication given by
              $$\sum_{g\in G}a_g t^g+\sum_{g\in G}b_g t^g\coloneqq\sum_{g\in G}(a_g+b_g)t^g,\ \sum_{g\in G}a_g t^g\cdot\sum_{g\in G}b_g t^g\coloneqq\sum_{g\in G}\left(\sum_{h\in G}a_h b_{g-h}\right)t^g,$$
              $R\pparen{G}$ forms a commutative ring.
If $R$ is a field, then so is $R\pparen{G}$. Moreover, with the map
              $$v\colon R\pparen{G}\lto G\cup\{\infty\},\ f\longmapsto \begin{cases}\min\opn{Supp}(f),& \text{ if }f\neq 0\\\infty,& \text{ if }f=0\end{cases},$$
              $R\pparen{G}$ becomes a valued field with value group $G$ and residue field $R$. Since $\opn{char}R\pparen{G}=\opn{char}R$, we call $R\pparen{G}$ the \textbf{equal-characteristic field of Hahn series} over $R$ with value group $G$, also denoted by $R\pparen{t^G}$ with respect to the formal variable $t$.

Let $k$ be a perfect field of characteristic $p$ and $G$ be an ordered group containing $\bfZ$ as a subgroup. In addition, let
    $$\calN\coloneqq \left\{\sum_{g\in G}r_g t^g\in W(k)\pparen{t^G}\colon \text{ for every } g\in G,\ \sum_{n\in\bfZ}r_{g+n}p^n=0\right\},$$
    where $W(k)$ is the ring of Witt vectors of $k$. We call the elements of $\calN$ the \textbf{null series} of $W(k)\pparen{t^G}$. Then $\calN$ is a maximal ideal of $W(k)\pparen{t^G}$, which makes $W(k)\pparen{p^G}\coloneqq W(k)\pparen{t^G}/\calN$ a field\footnote{Informally, $W(k)\pparen{p^G}$ is obtained by replacing the formal variable $t$ in elements of $W(k)\pparen{t^G}$ by the prime $p$.}, called the \textbf{field of $p$-adic Hahn series}. Every element $x$ in $W(k)\pparen{p^G}$ can be uniquely written as $$x=\sum_{g\in G}[r_g]p^g,$$
              where $r_g\in k$ for all $g\in G$ and $[\cdot]\colon k\lto W(k)$ is the Teichmüller lift. We call this the \textbf{standard expansion} of the element $x$.

        For $f=\sum_{g\in G}[r_g]p^g\in W(k)\pparen{p^G}$, define the \textbf{support} of $f$ to be
              $\opn{Supp}(f)=\{g\in G\colon r_g\neq 0\}$.
              Then the map
              $$v\colon W(k)\pparen{t^G}/\calN\lto G\cup\{\infty\},\ f\mapsto \begin{cases}
                      \min\opn{Supp}(f), & \text{ if }f\neq 0 \\\infty,& \text{ if }f=0
                  \end{cases}$$
              makes $W(k)\pparen{t^G}/\calN$ a mixed-characteristic valued field with value group $G$ and residue field $k$.
   
The most fundamental property of the field of Hahn series is the following:
\begin{theorem}[cf. {\cite[Theorem 1, Corollary 4, Corollary 6]{poonenMAXIMALLYCOMPLETEFIELDS1993}}]\label{prop:49318}
    Let $F$ be an equal-characteristic (resp. mixed-characteristic) valued field with divisible value group $G$ and algebraically closed residue field $k$. Then the equal-characteristic (resp. $p$-adic) field of Hahn series $k\pparen{t^G}$ (resp. $W(k)\pparen{p^G}$) is the unique (up to isomorphism of valued fields) minimal spherically complete extension of $F$. Moreover, it is algebraically closed and complete.
\end{theorem}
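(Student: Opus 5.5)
We follow Poonen's strategy, which rests on Kaplansky's theory of pseudo-Cauchy sequences and immediate extensions. We treat the equal- and mixed-characteristic cases in parallel, writing $H$ for $k\pparen{t^G}$ (resp. $W(k)\pparen{p^G}$).

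\textbf{Step 1: $H$ is spherically complete with value group $G$ and residue field $k$.} The value group and residue field can be read off from the definition of $v$ in \Cref{sec:hahn-series}. For spherical completeness, take a pseudo-Cauchy sequence $(x_\rho)$ in $H$ with eventually strictly increasing gaps $\gamma_\rho=v(x_{\rho+1}-x_\rho)$. Once $v(x_\sigma-x_\rho)>\gamma$, the coefficients of $x_\sigma$ at exponents $<\gamma$ no longer depend on $\sigma$. Gluing these stabilized coefficients gives a function $G\to k$ whose support is well ordered, since it is covered by initial segments of well-ordered sets. This function defines a pseudo-limit. In the mixed case one argues through standard expansions: the Teichm\"uller representatives are multiplicative, and the carries coming from Witt vector addition only affect exponents $\geq\gamma$, so the same gluing works. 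Completeness of $H$ follows, since Cauchy sequences are pseudo-Cauchy with gaps tending to $\infty$.

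\textbf{Step 2: $H$ has no proper immediate extensions, and is algebraically closed.} A spherically complete field is maximal in Kaplansky's sense. Indeed, if $L/H$ were a proper immediate extension and $a\in L\setminus H$, then the values $v(a-x)$ for $x\in H$ would have no maximum. Choosing a sequence of $x$'s realizing increasing values gives a pseudo-Cauchy sequence in $H$ with no limit in $H$, which is a contradiction.

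Now let $L/H$ be a finite extension of degree $n$. Since $G$ is divisible and $k$ is algebraically closed, we have $e=f=1$. By the Ostrowski defect formula, $n=d$ is a power of the residue characteristic and $L/H$ is immediate. Maximality then forces $n=1$, so $H$ is algebraically closed.

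\textbf{Step 3: embedding and minimality.} Let $F$ have value group $G$ and residue field $k$. Adjoining to $F$ the limits of its limitless pseudo-Cauchy sequences, transfinitely and using Zorn's lemma, produces a maximal immediate extension $M$ of $F$. We then need two facts.

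\begin{enumerate}
\item \emph{$F$ embeds into $H$ as valued fields.} We use Zorn's lemma on partial embeddings $F_0\hookrightarrow H$ of subfields of $F$. First embed the prime field. In the mixed case this means $\bfQ_p\hookrightarrow W(k)[p^{-1}]$ with $p\mapsto p$. Then extend one element at a time:
\begin{itemize}
\item if the new element enlarges the value group, match it with a monomial $t^g$ or $p^g$;
\item if it enlarges the residue field, use a Hensel lift of the residue;
\item if it adds neither, it is a limit of a pseudo-Cauchy sequence, and we send it to a pseudo-limit in $H$, which exists by Step 1.
\end{itemize}
Kaplansky's lemmas ensure that each such extension is a valued-field isomorphism onto its image.

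\item \emph{Uniqueness.} Kaplansky's Hypothesis A holds because $k$ is algebraically closed and $G$ is divisible: every additive polynomial over $k$ is surjective, and $G$ is $p$-divisible. Under this hypothesis, any two maximal immediate extensions of $F$ are isomorphic over $F$. Combined with Steps 1 and 2, this shows that $H$ is the unique minimal spherically complete extension of $F$. For minimality, note that any spherically complete extension of $F$ contains a maximal immediate extension of $F$, obtained as the closure of $F$ under pseudo-limits.
\end{enumerate}

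\textbf{Main obstacle.} We expect the hardest point to be the uniqueness statement under Hypothesis A in mixed characteristic, where Kaplansky's original argument needs Poonen's adaptation. A secondary difficulty is checking, in Step 1, that Witt vector carries do not destroy the well-orderedness of the glued support. For the uniqueness, we would first try to reduce to equal characteristic via the tilting-style correspondence between $k\pparen{t^G}$ and $W(k)\pparen{p^G}$ on standard expansions. If that fails, we would redo Kaplansky's argument directly, replacing additive polynomials by their Witt vector analogues.
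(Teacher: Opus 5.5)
The paper gives no proof of this statement; it quotes Poonen, whose argument is the Kaplansky-style route you outline. Your Steps 1 and 2 are essentially sound, with two small repairs. In Step 2, Ostrowski's formula presupposes that $H$ is henselian, but you do not need it: any extension of $v$ to a finite $L/H$ has $e=f=1$, so $L$ is immediate and maximality gives $L=H$. In Step 1, a Cauchy sequence need not itself be pseudo-Cauchy, so deduce completeness from spherical completeness via nested balls.

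\textbf{The gap: the embedding in Step 3(1).} This step is indispensable, since without it $H$ is not even an extension of $F$. Your three recipes only make sense for $a$ transcendental over $F_0$. If $a$ is algebraic over $F_0$, its image must be a root of the transported minimal polynomial, and none of your choices guarantees this.
\begin{enumerate}
\item Take $F_0=\mathbf{Q}$ ($p$-adically valued) and $a=\sqrt{-p}$. The recipe $a\mapsto p^{1/2}$ is not a ring homomorphism, because $(p^{1/2})^2=p\neq-p$.
\item Take $a$ a root of $X^p-X-t^{-1}$ over $F_0=\bigcup_n\overline{\mathbf{F}}_p(t^{1/n})$. Then $F_0(a)/F_0$ is immediate and $a$ is a limit of $x_k=\sum_{i\leq k}t^{-1/p^i}$. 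The pseudo-limits of $(x_k)$ in $H$ are all the elements $\sum_{k\geq1}t^{-1/p^k}+y$ with $v(y)\geq0$, but only the $p$ choices $y\in\mathbf{F}_p$ are roots.
\end{enumerate}
Kaplansky's theorem that an arbitrary pseudo-limit works holds only for sequences of transcendental type. For algebraic type you need a pseudo-limit that is a root of a minimal-degree polynomial. This is exactly where Hypothesis A, or the algebraic closedness of $H$, must be used, not only in the uniqueness step. Also, in your first bullet the relevant value is $v(a-x_0)$, where $x_0$ maximizes $v(a-x)$ over $x\in F_0$, not $v(a)$.

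\textbf{How to repair it.} Since $G$ is divisible and $k$ algebraically closed, $F^{\mathrm{alg}}$ still has value group $G$ and residue field $k$. So you may assume $F$ algebraically closed and run Zorn over algebraically closed subfields $F_0$. Keep the induced maps on value groups and residue fields equal to the identity; this is needed so that $H$ is \emph{immediate} over the image of $F$, which Step 3(2) uses. For $a$ transcendental over $F_0$, consider the set $\{v(a-x)\colon x\in F_0\}$. There are three cases.
\begin{enumerate}
\item It has a maximum $v(a-x_0)\notin v(F_0^\times)$: send $a-x_0$ to an element of $H$ of that value.
\item It has a maximum $v(a-x_0)=v(c)$ with $c\in F_0$: then the residue of $(a-x_0)/c$ is transcendental over the algebraically closed residue field of $F_0$, and you send $(a-x_0)/c$ to a unit of $H$ with that residue.
\item It has no maximum: the associated sequence is of transcendental type because $F_0$ is algebraically closed, so any pseudo-limit in $H$ works.
\end{enumerate}
Then extend to the algebraic closure of $F_0(a)$. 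Use Step 2, the conjugacy of extensions of a valuation to a normal algebraic extension, and surjectivity of the decomposition group onto residue automorphisms. Your claim in Step 3(2), that a spherically complete extension of $F$ contains a maximal immediate extension ``as the closure of $F$ under pseudo-limits'', needs the same care with algebraic-type sequences.

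\textbf{On the obstacle you single out.} It is misplaced. Kaplansky's uniqueness theorem under Hypothesis A is not restricted to equal characteristic. A reduction via $\Theta$ cannot work, since $\Theta$ is not a ring homomorphism. The genuinely mixed-characteristic input is the construction of $W(k)((p^G))$ itself:
\begin{enumerate}
\item the null series form a maximal ideal;
\item every class has a unique standard expansion with well-ordered support, which is where carries and well-orderedness really interact, not in your gluing;
\item $v(x-y)>\gamma$ exactly when the standard expansions of $x$ and $y$ agree at all exponents $\leq\gamma$.
\end{enumerate}
Your Step 1 relies on precisely these facts. They are what Poonen provides, and the paper records them as background in its definition of $p$-adic Hahn series.
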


The following are the fields of Hahn series used in this article:
\begin{example}\label{eg:44541}
    Let $F$ be the algebraic closure of $\overline{\bfF}_p\pparen{t}$ (resp. $\breve{\bfQ}_p$), which has value group $\bfQ$ and residue field $\overline{\bfF}_p$. Then the field of equal-characteristic (resp. $p$-adic) Hahn series $\bfL_p^\flat\coloneqq\overline{\bfF}_p\pparen{t^\bfQ}$ (resp. $\bfL_p\coloneqq\breve{\bfZ}_p\pparen{p^\bfQ}$) is the spherical completion of $F$ with the same residue field and value group, and is algebraically closed and complete.
\end{example}
In the rest of the paper, we refer to elements of $\bfL_p$ as $p$-adic Hahn series, without specifying the residue field and the value group, for brevity.

\subsection{QTR functions and $\overline{\bfF}_p\pparen{t}$-algebraicity criterion}
In \cite{kedlayaAlgebraicClosurePower2001}, Kedlaya gave a necessary and sufficient condition for a Hahn series to be algebraic over $\overline{\bfF}_p\pparen{t}$. In this subsection, we recall the definition of QTR functions and Kedlaya's algebraicity criterion in terms of the notion of words and digit vectors that are introduced in \cite{wang2026padichahnseriessparse}.

\begin{definition}\leavevmode
    \begin{enumerate}
        \item Let $\bbP\coloneqq \{\uld\in\bigoplus_{\bfZ_{\geq1}}\bfN\mid 0\leq \uld_i<p\text{ for every }i\in\bfZ_{>0}\}\subsetneq \bigoplus_{\bfZ_{\geq 1}}\bfN$. Elements of $\bbP$ are called \textbf{digit vectors}.
        \item For $\uld\in \bigoplus_{\bfZ_{\geq 1}}\bfN$ and $i\in\bfZ_{\geq 1}$, denote by $d_i=\uld_i$ the $i$-th component of $\uld$. The direct sum consists of maps $\bfZ_{>0}\to\bfN$ that vanish outside a finite set, with coordinatewise addition.
        \item For $\uld\in \bigoplus_{\bfZ_{\geq 1}}\bfN$, call $\Psi(\uld)\coloneqq \sum_{i=1}^\infty \uld_i\in\bfN$ the \textbf{digit sum} of $\uld$, $\normd\coloneqq \sum_{i=1}^\infty \uld_i p^{-i}\in\bfQ_{\geq 0}$ the \textbf{digit value} of $\uld$, and $\opn{supp}(\uld)=\{i\in\bfZ_{\geq 1}\mid \uld_i\neq 0\}$ the \textbf{support} of $\uld$.
    \end{enumerate}
\end{definition}
\begin{lemma}[{\cite[Section 3]{wang2026padichahnseriessparse}}]\label{lem:20585}
    For any $\uld\in\bigoplus_{\bfZ_{\geq 1}}\bfN$, there exists a unique element $\tau(\uld)\in\bbP$ such that $\normd-\lVert\tau(\uld)\rVert\in\bfZ$.
\end{lemma}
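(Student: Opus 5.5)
The plan is to build $\tau(\uld)$ by a carrying procedure, as in adding numbers written in base $p$. Existence and uniqueness are handled separately. The main point is that digit vectors with digits in $\{0,\dots,p-1\}$ represent each class of $\bfZ[1/p]\cap\bfQ_{\geq0}$ modulo $\bfZ$ exactly once, with the fractional part lying in $[0,1)$.

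\emph{Uniqueness.} For $\ule\in\bbP$ we have $0\leq\lVert\ule\rVert\leq\sum_{i=1}^N(p-1)p^{-i}=1-p^{-N}<1$, where $N$ bounds $\opn{supp}(\ule)$. Suppose $\ule,\ule'\in\bbP$ satisfy $\lVert\ule\rVert-\lVert\ule'\rVert\in\bfZ$. Both values lie in $[0,1)$, so they are equal. Multiply by $p^N$ with $N$ large, so that $p^N\lVert\ule\rVert=\sum_{i=1}^N e_ip^{N-i}$ is an integer in $[0,p^N)$ written in base $p$ with digits $e_N,\dots,e_1$. Uniqueness of the base-$p$ expansion of an integer then gives $\ule=\ule'$.

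\emph{Existence.} Let $N$ be such that $\opn{supp}(\uld)\subseteq\{1,\dots,N\}$. Then $m\coloneqq p^N\normd=\sum_{i=1}^N d_ip^{N-i}$ is a nonnegative integer. Let $r$ be the residue of $m$ modulo $p^N$, so $0\leq r<p^N$, and write $r=\sum_{i=1}^N e_ip^{N-i}$ in base $p$ with $0\leq e_i<p$. Set $\tau(\uld)\coloneqq(e_1,\dots,e_N,0,0,\dots)\in\bbP$. Then $\normd-\lVert\tau(\uld)\rVert=(m-r)/p^N\in\bfZ$, as required.

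No step is a genuine obstacle. The only point needing care is checking that the bound $\lVert\ule\rVert<1$ is strict, which holds because $\ule$ has finite support; this strict bound is exactly what makes uniqueness work. If this is formalized, the convenient route is to work with the integer $p^N\normd$ for a uniform $N$ covering all supports involved, so that everything reduces to the base-$p$ digit facts for natural numbers.
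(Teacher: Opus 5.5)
Your proof is correct and follows essentially the reasoning the paper relies on. The paper cites this lemma rather than proving it, and justifies it by noting that the fractional part of $\lVert\uld\rVert$ lies in $\bfZ[1/p]\cap[0,1)$ and has a unique terminating base-$p$ expansion. Your reduction of $p^N\lVert\uld\rVert$ modulo $p^N$ (existence) and your strict bound $\lVert\ule\rVert<1$ for $\ule\in\bbP$ (uniqueness) make exactly that precise.
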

We call $\tau(\uld)$ the \textbf{normalization} of $\uld$.
More generally, let $T\geq1$ be an integer and $x\in T^{-1}\bfZ[1/p]$. The fractional part of $-Tx$ lies in $\bfZ[1/p]\cap[0,1)$ and thus has a unique terminating base-$p$ expansion; in other words, there is a unique $\uld\in\bbP$ such that $Tx+\normd\in\bfZ$. We call it the digit vector \textbf{$T$-representing} $x$. Concretely, $\normd$ is the fractional part of $-Tx$, so that $x=(\lceil Tx\rceil-\normd)/T$. In particular, $\tau(\uld)$ is the digit vector $1$-representing $-\normd$: it records the fractional digits after carrying in a coordinatewise sum.

For integers $a\geq 1$ and $b,c\geq 0$, we set
$$S_{a,b,c} = \left\{\frac{1}{a}\left(m-\norm{\uld}\right)\middle\vert m\in\bfZ,\ m\geq -b,\ \uld\in\bbP,\ \Psi(\uld)\leq c \right\}\subset \frac{1}{a}\bfZ[1/p].$$
For $m\geq -b$, we define the $m$-th slice of $S_{a,b,c}$ by
$$S_{a,b,c,m}=\left\{\frac{1}{a}\left(m-\norm{\uld}\right)\middle\vert\uld\in\bbP,\ \Psi(\uld)\leq c\right\}\subset \frac{1}{a}\bfZ[1/p]\cap\left(\frac{m-1}{a},\frac{m}{a}\right].$$
\begin{definition}    
We say that a function $x\colon\bfQ\lto\overline{\bfF}_p$ is quasi-twist recurrent (QTR for short) with respect to the data $(a,b,c,M,N)\in \bfZ_{>0}\times \bfN\times \bfN\times \bfZ_{>0}\times \bfZ_{>0}$, if
\begin{enumerate}
  \item $\opn{Supp}(x)\coloneqq\{q\in \bfQ\vert x(q)\neq 0\}$ is well-ordered and contained in $S_{a,b,c}$;
  \item whenever $\uld\in\bbP$, $\Psi(\uld)\leq c$, and                      $d_{s+1}=\cdots=d_{s+M}=0$, for some integer $s\geq 0$, the insertion of $N$ further zeros immediately after this run yields a digit vector $\uld' $ satisfying
  $$x\left(\frac{1}{a}(m-\norm{\uld})\right)=x\left(\frac{1}{a}(m-\norm{\uld'})\right)$$ for every integer $m\geq -b$.
  \end{enumerate}
We omit the data $(a,b,c,M,N)$ when they are clear from the context, and we say that a subset of $\bfQ$ is QTR if it is the support of a QTR function.
\end{definition} 

\begin{theorem}[cf. {\cite[Theorem 15]{kedlayaAlgebraicClosurePower2001}, \cite[Theorem 11.11]{kedlayaAlgebraicityGeneralizedPower2017b}}]\label{prop:54845}
    A Hahn series $x\in\bfL_p^\flat$ is algebraic over $\overline{\bfF}_p\pparen{t}$ if and only if the function $\opn{Coeff}(x,\cdot)$ is QTR.
\end{theorem}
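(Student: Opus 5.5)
This is Kedlaya's theorem characterizing $\overline{\bfF}_p\pparen{t}$-algebraic Hahn series, and I would follow his strategy. The plan is to prove the two directions separately, passing in both through the dictionary between $p$-automatic sequences and algebraicity (Christol's theorem), extended to rational exponents.

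\emph{Sufficiency (QTR $\Rightarrow$ algebraic).}
\begin{enumerate}
\item Fix data $(a,b,c,M,N)$. Substituting $t\mapsto t^{a}$ and multiplying by $t^{b}$ are algebraic operations, so they reduce to the case $a=1$, $b=0$.
\item Decompose $x=\sum_{m\geq 0}\sum_{\uld} x(m-\normd)\,t^{m-\normd}$ according to the integer part $m$ and the digit vector $\uld$ with $\Psi(\uld)\leq c$.
\item Use the recurrence condition to show that $x$ lies in a finite-dimensional $\overline{\bfF}_p\pparen{t}$-vector space stable under a twisted Frobenius (Cartier-type) operator. Concretely:
\begin{itemize}
\item Group the digit vectors by their ``shape'': the positions and values of the nonzero digits, with runs of zeros of length $>M$ collapsed modulo $N$.
\item There are finitely many shapes, since at most $c$ nonzero digits occur and zero-run lengths are taken modulo $N$ beyond $M$.
\item For each shape, the generating series over all lengths of the long zero runs is a finite sum of products of geometric-type series in $t^{1/p^{k}}$. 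These satisfy equations of the form $y^{p^N}=t^{\cdot}y+\dots$, i.e. Artin--Schreier/Frobenius-linear equations, and so are algebraic.
\end{itemize}
\item Since at most $c$ digits are nonzero, there are at most $c$ long zero runs, so the nesting depth is bounded and induction on $c$ finishes the argument.
\item The dependence on $m$ must be handled as well: for fixed $\uld$, the sequence $m\mapsto x(m-\normd)$ should be $p$-automatic (after twisting), and Christol's theorem gives algebraicity over $\overline{\bfF}_p(t)$.
\end{enumerate}
This is precisely where Kedlaya's formulation of QTR builds in automaticity in $m$, and I would need to check that the condition as stated above, being uniform in $m$, suffices.

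\emph{Necessity (algebraic $\Rightarrow$ QTR).}
\begin{enumerate}
\item Start from the fact that $\overline{\bfF}_p\pparen{t}^{\mathrm{alg}}$ is generated by the perfect closure of $\overline{\bfF}_p\pparen{t}$ together with iterated Artin--Schreier extensions and tame radicals $t^{1/a}$ (Abhyankar).
\item Show that the class of QTR functions is closed under each of the following:
\begin{itemize}
\item addition and multiplication, where the constant $c$ grows additively (since $\Psi$ is subadditive up to the carries handled by $\tau$ in \Cref{lem:20585}) and $M,N$ are replaced by suitable combinations such as $\max$ and $\mathrm{lcm}$;
\item $p$-th roots;
\item substitution $t\mapsto t^{1/a}$;
\item solving Artin--Schreier equations $y^p-y=z$ with $z$ QTR.
\end{itemize}
\item For the Artin--Schreier step, write the solution explicitly as $y=\sum_{k\geq1} z^{1/p^k}$ after killing the nonnegative-valuation part. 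Taking $p^k$-th roots appends $k$ digits at the front of the digit vector. This produces exactly the long zero runs that QTR controls, and the recurrence in $k$ with period $N$ comes from the finiteness of the data.
\item Reciprocals also need to be handled, to get a field; I would obtain them via the norm/adjugate trick from the ring closure.
\end{enumerate}

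\emph{Main obstacle.} The hardest point is the closure of QTR under multiplication. The carries in $\tau(\uld+\uld')$ can interact with zero runs, destroying the insertion-invariance. My first attempt would be to enlarge $M$ so that any run of length $\geq M$ in the sum forces runs in both summands, and to show that carries propagate at most $c$ positions. Alternatively, I would cite \cite[Theorem 11.11]{kedlayaAlgebraicityGeneralizedPower2017b} directly, since the statement is quoted from there.
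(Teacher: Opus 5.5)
The paper does not prove this statement. It imports it from \cite[Theorem 15]{kedlayaAlgebraicClosurePower2001} and \cite[Theorem 11.11]{kedlayaAlgebraicityGeneralizedPower2017b}, restates it in the digit-vector language of \cite{wang2026padichahnseriessparse}, and formalizes it in a separate Lean project. Your fallback of citing Kedlaya is therefore exactly the paper's route, modulo the translation between Kedlaya's formulation and the condition stated here. Read as an independent proof, though, your sketch contains a wrong step in one direction and leaves the crux of the other unproved.

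\emph{Sufficiency.} Step 5 is mistaken. The base field is $\overline{\bfF}_p\pparen{t}$, not $\overline{\bfF}_p(t)$, so Christol's theorem is irrelevant. Moreover, QTR imposes no automaticity in $m$: $\sum_{m\geq0}c_mt^m$ with arbitrary $c_m\in\overline{\bfF}_p$ is QTR (only $\uld=0$ occurs) and lies in the base field, so trying to prove automaticity would fail.

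What uniformity in $m$ actually gives is a factorization. Sort the $\uld\in\bbP$ with $\Psi(\uld)\leq c$ into the finitely many classes $\Sigma$ determined by the nonzero digits, the lengths of the short gaps, and the lengths of the long gaps modulo $N$. Every member of a class arises from its minimal member by inserting blocks of $N$ zeros after runs of $M$ zeros. Since the QTR identity holds for all $m$ at once, this gives $x=\sum_\Sigma F_\Sigma(t^{1/a})\,G_\Sigma$ with $F_\Sigma\in\overline{\bfF}_p\pparen{t}$ and $G_\Sigma=\sum_{\uld\in\Sigma}t^{-\normd/a}$.

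The $G_\Sigma$ are nested sums, not products. Split off the digits before the first long gap to get $G_\Sigma=t^{-\lambda}\sum_{k\geq0}Y^{p^{-Nk}}$, where $Y$ is of the same type with one fewer varying gap. Then $Z=\sum_{k\geq0}Y^{p^{-Nk}}$ satisfies $Z^{p^N}-Z=Y^{p^N}$, and induction on the number of long gaps (at most $c$) finishes this direction.

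\emph{Necessity.} Your route (tame radicals, Artin--Schreier towers, perfect closure, plus closure of QTR under ring operations, $p$-th roots and Artin--Schreier equations) is viable in principle. However, the step carrying all the weight, closure under products, is only hoped for. It needs the following:
\begin{itemize}
\item a common $a$ and $N'=\opn{lcm}(N_1,N_2)$;
\item the fact that a carry chain in $\uld_1+\uld_2$ has a nonzero digit of $\uld_1$ or $\uld_2$ at each of its positions, hence has length at most $2c$;
\item consequently, for $M'\geq\max(M_1,M_2)+2c+1$, a run of $M'$ zeros of $\tau(\uld_1+\uld_2)$ ends in at least $\max(M_1,M_2)$ common zeros of both summands that no carry crosses, so insertion there is a bijection between decompositions $q_1+q_2=q$ and $q_1'+q_2'=q'$ preserving $m_1,m_2$.
\end{itemize}

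Your Artin--Schreier step is also inaccurate, for two reasons. First, the period does not survive: $y=\sum_{k\geq1}t^{-p^{-k}}$ is QTR with $N=1$, yet the solution $w=\sum_{k\geq1}y^{p^{-k}}=\sum_{n\geq2}(n-1)t^{-p^{-n}}$ of $w^p-w=y$ requires $p\mid N$. Second, the coefficients of $\sum_{k\geq1}z^{p^{-k}}$ carry the twist $c\mapsto c^{p^{-k}}$. Exact insertion invariance therefore forces $N$ to be a multiple of the degree over $\bfF_p$ of the field generated by the values of $z$ at negative exponents. This is available only because the negative part involves finitely many $m$, hence finitely many values.

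You can avoid products of two QTR functions altogether by linearizing, as in Christol-type arguments. Since $\overline{\bfF}_p\pparen{t}(x)$ is finite-dimensional over $\overline{\bfF}_p\pparen{t}$, $x$ satisfies $\sum_ic_ix^{p^i}=0$ with $c_i\in\overline{\bfF}_p\pparen{t}$ not all zero. This reduces necessity to studying roots of additive equations, where Frobenius only shifts digit vectors and multiplication by Laurent series only moves $m$.
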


\subsection{Shadow map and completed $p$-adic algebraic closure}
\begin{definition}\label{def:shadow-map}
    Define the \textbf{shadow map} to be the set-theoretic bijection
    $$\Theta\colon \bfL_p^\flat\lto\bfL_p,\ \sum_{q\in\bfQ}a_qt^q\longmapsto\sum_{q\in\bfQ}[a_q]p^q.$$
\end{definition}

The following result of Kedlaya links the algebraicity of $p$-adic Hahn series to that of equal-characteristic Hahn series:
\begin{theorem}[{cf. \cite[Theorem 13.4]{kedlayaAlgebraicityGeneralizedPower2017b}}]\label{thm:47}
   Let $L$ be the completed algebraic closure of $\overline{\bfF}_p\pparen{t}$ in $\overline{\bfF}_p\pparen{t^\bfQ}$. Then the completed integral closure of $\breve{\bfQ}_p$ in $\bfL_p$ is the completion of the following set:
    $$\left\{\sum_q[x_q]p^q\in\bfL_p\middle\vert\sum_q x_q t^q\in L\right\}.$$
\end{theorem}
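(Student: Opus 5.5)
The plan is to compare $\Theta$ with the tilting equivalence. Let $K$ denote the completed integral (equivalently, algebraic) closure of $\breve{\bfQ}_p$ in $\bfL_p$. This is a perfectoid field. Choose $p^\flat=(p,p^{1/p},p^{1/p^2},\dots)\in K^\flat$ using the fixed compatible roots $p^{1/p^n}$, and send $t\mapsto p^\flat$. This should identify $K^\flat$ with the completed perfection of the algebraic closure of $\overline{\bfF}_p\pparen{t}$. Since $L$ is complete and algebraically closed, hence perfect, that completed perfection is exactly $L$.

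I will also use the sharp map $\sharp\colon K^\flat\to K$, $x\mapsto \lim_n (\widehat{x^{1/p^n}})^{p^n}$, which is multiplicative, continuous, and satisfies $(t^q)^\sharp=p^q$ and $c^\sharp=[c]$ for $c\in\overline{\bfF}_p$. The proof then reduces to showing that $\Theta(L)$ and $K$ have the same closure in $\bfL_p$.

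\textbf{Step 1: $\Theta$ is an isometry.} For $x\neq y$ in $\bfL_p^\flat$, I first check that $v(\Theta(x)-\Theta(y))=v(x-y)$. At the smallest exponent $q$ where the coefficients differ, the difference $\Theta(x)-\Theta(y)$ has leading term $([a_q]-[b_q])p^q$, and $[a_q]-[b_q]$ is a unit because $a_q\neq b_q$ in $\overline{\bfF}_p$. Consequently $\Theta$ carries closures to closures, so it suffices to work with dense subsets on both sides.

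\textbf{Step 2: comparing $\Theta$ with $\sharp$.} For $x=\sum_q a_qt^q\in L$, both $\Theta(x)$ and $x^\sharp$ have the same leading term $[a_{v(x)}]p^{v(x)}$. The failure of additivity of $\sharp$ and of the Teichm\"uller lift is governed by Witt vector addition: $[a]+[b]=\sum_{n\ge0}[S_n(a,b)^{1/p^n}]p^n$, where the $S_n$ are polynomials over $\bfF_p$. This should give
$$\Theta(x)-x^\sharp=\sum_{n\geq1}p^n\,\Theta(y_n)^{\sharp\text{-corrected}}.$$
Here each $y_n$ is obtained from the coefficients of $x$ by applying the Witt polynomials $S_n$ and $p^n$-th roots, so $y_n$ should again lie in $L$, using that $L$ is a perfect field. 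Concretely, I aim for the approximation statement: for every $x\in L$ there is $x'\in L$ with
$$v\bigl(\Theta(x)-x^\sharp-p\,\Theta(x')\bigr)\geq v(x)+2.$$
Iterating this shows that $\Theta(x)$ is a convergent sum of terms $p^n y^\sharp$ with $y\in L$, so $\Theta(x)\in K$. The converse argument also shows that each $x^\sharp$ lies in the closure of $\Theta(L)$.

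\textbf{Step 3: density.} Every $z\in K$ can be written $z=\sum_{n\geq0}p^n y_n^\sharp$ with $y_n\in K^\flat=L$. This follows from repeated use of the fact that $\sharp$ is surjective modulo $p$, i.e. $\mathcal O_{K^\flat}/t\cong\mathcal O_K/p$. By Step 2 each $p^ny_n^\sharp$ lies in the closure of $\Theta(L)$. I also need $p\,\Theta(y)=\Theta(ty)$, which holds because $\Theta$ commutes with multiplication by $p^q$ versus $t^q$. Hence $K\subseteq\overline{\Theta(L)}$, and together with Step 2 this gives equality of closures.

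I expect the main obstacle to be Step 2, namely controlling the carries. A Hahn series has infinitely many, well-ordered exponents, so the Witt-vector carries from $\sum_q[a_q]p^q$ versus $(\sum_q a_qt^q)^\sharp$ propagate through transfinite sums. One must show that the correction series is again a well-defined Hahn series lying in $L$, with valuation gaining at least $1$ at each stage. My first attempt would be to prove the estimate for finite sums, where it is a direct Witt vector computation. I would then pass to $L$ by density of finite-support series in the completion, using the isometry of Step 1 and the continuity of $\sharp$, rather than handling infinite carries directly.
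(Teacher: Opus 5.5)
The paper does not prove this statement. It imports it from Kedlaya, and the formalization treats it as a separate library result, so your outline has to stand on its own. Step 1 is correct and the tilting framework is reasonable. There is, however, a genuine gap exactly where you expected the difficulty.

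\textbf{The density step fails.} Your remedy for the transfinite carries is to prove the estimate for finite-support series and then pass to $L$ by density. This cannot work, because finite-support series are not dense in $L$. A Hahn series is a limit of finite-support series if and only if every truncation $x_{\leq R}$ is finite. So their closure is only the completion of $\bigcup_N\overline{\bfF}_p\pparen{t^{1/N}}$. For example, $y=\sum_{k\geq1}t^{-1/p^k}$ satisfies $y^p-y=t^{-1}$, so $ty=\sum_{k\geq1}t^{1-1/p^k}$ lies in $L$. Yet $v(ty-z)<1$ for every finite-support $z$. Moreover, $\Theta(x)=\sum_q[a_q]p^q\in K$ is trivial when $x$ is a finite sum. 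So this reduction yields nothing beyond the tame part. Elements like $ty$, whose image $\sum_kp^{1-1/p^k}$ is transcendental, are never reached; that is where the transfinite carries occur and where the content of the statement lies. Neither continuity of $\sharp$ nor the isometry of Step 1 can bridge this.

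\textbf{The missing ingredient.} What is needed is a closure property of $L$ under Witt-vector carries. Your series $y_n$ are built from the individual monomials $a_qt^q$ of $x$, not from $x$ by field operations. For $p=2$, already $y_1^2=\sum_{q<q'}a_qa_{q'}t^{q+q'}$ is the second elementary symmetric function of the monomials of $x$. That such series lie in $L$ does not follow from $L$ being perfect, complete or algebraically closed. It requires real information about the elements of $L$, for instance an intrinsic description of their truncations in the spirit of Kedlaya's QTR criterion. This is the real content of the statement.

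\textbf{The same gap in Step 3.} Step 3 uses this property silently. Note that $\Theta(L)$ is already closed, since $\Theta$ is an isometry. Knowing $p^ny_n^\sharp\in\Theta(L)$ for each $n$ gives $\sum_np^ny_n^\sharp\in\Theta(L)$ only if $\Theta(L)$ is closed under addition. This is unclear, since $\Theta$ is not additive.

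\textbf{Two smaller points.}
\begin{enumerate}
\item Your inequality $v(\Theta(x)-x^\sharp-p\Theta(x'))\geq v(x)+2$ does not iterate, because the error is not again of the form $p^2\Theta(\cdot)$. You need an expansion valid to all orders, e.g.\ $\Theta(x)=\theta\bigl(\sum_q[a_qt^q]\bigr)$ for a suitably defined transfinite Witt sum in $W(\calO_{\bfL_p^\flat})$. You then have to show that this Witt vector lies in $W(\calO_L)+\ker\theta$, which is again the carry problem.
\item The identification $K^\flat\cong L$ must be the canonical one, induced by the ring isomorphism $\calO_{\bfL_p^\flat}/t\cong\calO_{\bfL_p}/p$ given by $\Theta$ modulo $p$. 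You must also check that it carries $K^\flat$ onto $L$. An isomorphism merely extending $t\mapsto p^\flat$ is determined only up to automorphisms of $L$ fixing $\overline{\bfF}_p$ and all $t^q$. If it differs from the canonical one by an automorphism with $y\mapsto y+1$, it violates the congruence $\Theta(y)\equiv y^\sharp\pmod{p^{v(y)+1}}$ on which Step 2 relies.
\end{enumerate}
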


This result can be reformulated as follows:
\begin{proposition}\label{prop:25416}
    The shadow map $\Theta$ gives a homeomorphism between the completed algebraic closure of $\bfF_p\pparen{t}$ and $\bfC_p$.
\end{proposition}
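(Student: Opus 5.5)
The plan is to deduce the statement from \Cref{thm:47} by showing three things: $\Theta$ is an isometry, the completed closures appearing in \Cref{thm:47} are the objects named in the proposition, and isometries carry complete subsets to closed subsets.

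\textbf{Step 1: $\Theta$ is an isometry.} Take $f=\sum a_qt^q$ and $g=\sum b_qt^q$ in $\bfL_p^\flat$ with $f\neq g$, and let $q_0=\min\opn{Supp}(f-g)$, the first index with $a_q\neq b_q$. Then
$$\Theta(f)-\Theta(g)=\bigl([a_{q_0}]-[b_{q_0}]\bigr)p^{q_0}+\sum_{q>q_0}\bigl([a_q]-[b_q]\bigr)p^q .$$
Since $a_{q_0}\neq b_{q_0}$, the residue of $[a_{q_0}]-[b_{q_0}]$ is $a_{q_0}-b_{q_0}\neq 0$, so this element is a unit of $\breve{\bfZ}_p$. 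The terms with $q>q_0$ have valuation $\geq q$. Any carries produced when rewriting the right-hand side in standard form only move mass to strictly larger exponents. Hence
$$v_p\bigl(\Theta(f)-\Theta(g)\bigr)=q_0=v\bigl(f-g\bigr).$$
So $\Theta$ is an isometric bijection $\bfL_p^\flat\to\bfL_p$. In particular it is a homeomorphism onto its image when restricted to any subset, and it sends complete subsets to closed subsets.

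\textbf{Step 2: identify the two completed closures.}
\begin{itemize}
\item On the equal-characteristic side, let $L$ be the completed algebraic closure of $\overline{\bfF}_p\pparen{t}$, as in \Cref{thm:47}. I claim $L$ equals the completed algebraic closure of $\bfF_p\pparen{t}$ in $\bfL_p^\flat$.
  \begin{itemize}
  \item One inclusion: the algebraic closure of $\bfF_p\pparen{t}$ lies in that of $\overline{\bfF}_p\pparen{t}$.
  \item Other inclusion: the algebraic closure of $\bfF_p\pparen{t}$ contains $\overline{\bfF}_p$, hence $\overline{\bfF}_p(t)$. The series field $\overline{\bfF}_p\pparen{t}$ lies in the closure of $\overline{\bfF}_p(t)$, since truncations of a Laurent series converge to it. So the completion of the algebraic closure of $\bfF_p\pparen{t}$ is a complete field containing $\overline{\bfF}_p\pparen{t}$. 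By continuity of roots (Krasner's lemma), the completion of an algebraically closed valued field is algebraically closed. This completion therefore contains the algebraic closure of $\overline{\bfF}_p\pparen{t}$, and hence $L$.
  \end{itemize}
\item On the mixed side, I claim the completed integral (= algebraic) closure of $\breve{\bfQ}_p$ in $\bfL_p$ is the image of the fixed embedding $\bfC_p\hookrightarrow\bfL_p$.
  \begin{itemize}
  \item The image is complete and algebraically closed. It contains $\bfQ_p$ and hence the algebraic extension $\bfQ_p^{\opn{un}}$.
  \item The closure of $\bfQ_p^{\opn{un}}$ in $\bfL_p$ is the canonical $\breve{\bfQ}_p$: Teichmüller lifts of $\overline{\bfF}_p$ are roots of unity, hence algebraic and uniquely determined. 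So $\breve{\bfQ}_p$ lies in the image, and so does its algebraic closure, together with that closure's completion.
  \item Conversely, $\overline{\bfQ}_p$ lies in the algebraic closure of $\breve{\bfQ}_p$ and is dense in $\bfC_p$. This gives equality.
  \end{itemize}
\end{itemize}

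\textbf{Step 3: conclude.} By \Cref{thm:47}, $\bfC_p$ is the closure of $\Theta(L)$. Since $L$ is complete and $\Theta$ is an isometry, $\Theta(L)$ is complete, hence closed in $\bfL_p$. Therefore $\Theta(L)=\bfC_p$, and by Step 1 the restriction $\Theta|_L\colon L\to\bfC_p$ is an isometric bijection, in particular a homeomorphism.

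The main obstacle is the carrying argument in Step 1: one has to check carefully, using the construction of $\bfL_p$ as a quotient by null series, that non-standard coefficients in $\breve{\bfZ}_p$ never lower the leading exponent. The identifications in Step 2 are routine density and Krasner arguments.
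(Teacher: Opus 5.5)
Your proposal is correct and follows the same route as the paper. Like the paper, you identify the completed algebraic closures of $\bfF_p\pparen{t}$ and $\overline{\bfF}_p\pparen{t}$ using density of $\overline{\bfF}_p(t)$ and continuity of roots, identify the completed algebraic closure of $\breve{\bfQ}_p$ with $\bfC_p$, and conclude from \Cref{thm:47} together with the isometry $v_t(x-y)=v_p(\Theta(x)-\Theta(y))$. Beyond that, you only fill in details the paper leaves implicit: the coset-wise check that carrying cannot lower the leading exponent, and the remark that $\Theta(L)$ is complete, hence closed, so the completion in \Cref{thm:47} is $\Theta(L)$ itself.
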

\begin{proof}
    Since $\bfC_p$ is algebraically closed, the completed algebraic closure of $\breve{\bfQ}_p$ identifies with $\bfC_p$. Let $\bfK$ (resp. $\bfK_0$) be the algebraic closure of $\overline{\bfF}_p\pparen{t}$ (resp. $\bfF_p\pparen{t}$) in $\overline{\bfF}_p\pparen{t^\bfQ}$. We claim that $\widehat{\bfK}$ and $\widehat{\bfK}_0$ coincide, where the completion is taken with respect to the $t$-adic valuation. The inclusion $\widehat{\bfK}_0\subseteq \widehat{\bfK}$ is clear. Conversely, the continuity of roots (cf. \cite[Chapter 3, Section 3.4.1]{boschNonArchimedeanAnalysisSystematic1984a}) reduces the claim to showing that $\overline{\bfF}_p\pparen{t}$ lies in $\widehat{\bfK}_0$. This follows from the fact that $\overline{\bfF}_p(t)$ is a dense subfield of $\overline{\bfF}_p\pparen{t}$, and is algebraic over $\bfF_p\pparen{t}$.

    The result follows from \Cref{thm:47} and the fact that $v_t(x-y)=v_p(\Theta(x)-\Theta(y))$ for every $x,y\in\bfL_p^\flat$.
    
\end{proof}

Note that, using \Cref{prop:25416}, we can transfer the $\overline{\bfF}_p\pparen{t}$-algebraicity criterion (cf. \Cref{prop:54845}) to a necessary condition for a $p$-adic Hahn series to be algebraic over $\bfQ_p$:
\begin{corollary}\label{lem:local-qtr-full}
Let $f\in \bfL_p$ be algebraic over $\bfQ_p$. For every $R\in \bfZ$, the function $\opn{Coeff}(f_{\leq R},\cdot)$
is QTR. Its data $(a,b,c,M,N)$ may depend on $f$ and $R$.
\end{corollary}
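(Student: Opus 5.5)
We pass to the equal-characteristic side, using the approximation statement in \Cref{thm:47} (equivalently \Cref{prop:25416}), and then apply Kedlaya's criterion \Cref{prop:54845}. Fix $R\in\bfZ$.

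First we find an equal-characteristic series that agrees with $\Theta^{-1}(f)$ up to valuation $R$. Since $f$ is algebraic over $\bfQ_p$, it lies in $\overline{\bfQ}_p\subset\bfC_p$. By \Cref{prop:25416}, the element $\Theta^{-1}(f)$ lies in the completed algebraic closure $\widehat{\bfK}_0$ of $\bfF_p\pparen{t}$ in $\bfL_p^\flat$. Because $\bfK_0$ is dense in $\widehat{\bfK}_0$, there is $y\in\bfL_p^\flat$ algebraic over $\bfF_p\pparen{t}$ with $v_t(y-\Theta^{-1}(f))>R$. In Hahn series, $v_t(y-x)>R$ means exactly that $y$ and $x$ have the same coefficients at every exponent $q\leq R$. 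Hence
$$\opn{Coeff}(y_{\leq R},\cdot)=\opn{Coeff}(\Theta^{-1}(f)_{\leq R},\cdot)=\opn{Coeff}(f_{\leq R},\cdot),$$
where the second equality holds because $\Theta$ preserves coefficients by definition.

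Next we show that $y_{\leq R}$ is algebraic over $\overline{\bfF}_p\pparen{t}$. The series $y$ is algebraic over $\bfF_p\pparen{t}$, hence over $\overline{\bfF}_p\pparen{t}$. By \Cref{prop:54845}, $\opn{Coeff}(y,\cdot)$ is QTR with some data $(a,b,c,M,N)$. It remains to see that truncating at $R$ keeps the QTR property.

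Two approaches are available. The quickest observes that the tail $y-y_{\leq R}$ consists of the terms with exponent $>R$. Write $y=\sum_j y_j$, where $y_j$ collects the slice of exponents lying in $((j-1)/a,j/a]$ for $j\geq -b$. There are only finitely many slices with $j\leq aR$, plus one partial slice at the boundary. One would then need each slice to be algebraic, which is not obvious.

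The more direct route checks the QTR axioms on the truncated function $x'=\opn{Coeff}(y,\cdot)\cdot\mathbf 1_{(-\infty,R]}$. Axiom (1) is clear, since a subset of a well-ordered set contained in $S_{a,b,c}$ is still well-ordered and contained in $S_{a,b,c}$. For axiom (2), write an exponent as $q=\frac1a(m-\normd)$. Inserting zeros into $\uld$ does not change $\Psi(\uld)$ and changes $\normd$ only slightly, so $q$ stays in the same slice $\left(\frac{m-1}{a},\frac{m}{a}\right]$. We take $R\in\bfZ$, so $aR$ is an integer and the cutoff $R$ coincides with a slice endpoint. Therefore $q\leq R$ if and only if $m\leq aR$, a condition that the zero insertion leaves unchanged. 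It follows that $x'$ satisfies axiom (2) with the same data $(a,b,c,M,N)$.

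This gives the corollary directly, with data depending on $f$ (through $y$) and on $R$. The main subtle point is this boundary check, and it is precisely where the hypothesis $R\in\bfZ$ is used. A further point worth confirming is that the data come from $y$, which depends on $R$; this matches the remark in the statement.
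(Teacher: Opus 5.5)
Your proof is correct and follows the same route as the paper: you use \Cref{prop:25416} to produce an $\bfF_p\pparen{t}$-algebraic series agreeing with $\Theta^{-1}(f)$ at all exponents $\leq R$, apply Kedlaya's criterion \Cref{prop:54845} to it, and then truncate. The only difference is that you check directly that truncation preserves the QTR property, using $q\leq R\iff m\leq aR$ for $R\in\bfZ$ (the same observation the paper makes in the proof of \Cref{lem:separated-placements}), whereas the paper cites \cite[Lemma 6.7]{wang2026padichahnseriessparse}; your abandoned slice-decomposition detour is not needed.
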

\begin{proof}By \Cref{prop:25416}, we know that there exists a $\bfF_p\pparen{t}$-algebraic Hahn series $g\in \bfL_p^\flat$ such that $\opn{Coeff}(g,q)=\opn{Coeff}(f,q)=\opn{Coeff}(f_{\leq R},q)$ for all $q\leq R$. By \Cref{prop:54845}, the function $\opn{Coeff}(g,\cdot)$ is QTR, and the result follows from \cite[Lemma 6.7]{wang2026padichahnseriessparse}.
\end{proof}
Recall that the following result of Kedlaya provides an upper bound for the order type of the support of a $\bfQ_p$-algebraic $p$-adic Hahn series:
\begin{proposition}[{\cite[Section 4]{kedlayaPowerSeriesPAdic2001}}]
\label{prop:external-bounds}
If $f\in\bfL_p$ is algebraic over $\bfQ_p$, then
$\operatorname{ot}(\operatorname{Supp}(f))\leq\omega^\omega$.
\end{proposition}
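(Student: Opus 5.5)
The plan is to derive the bound from \Cref{lem:local-qtr-full}: first bound the order type of every truncation of $\opn{Supp}(f)$ by an ordinal strictly below $\omega^\omega$, then pass to the supremum. The key observation is that every proper initial segment of $\opn{Supp}(f)$ is bounded above by some element $q\in\opn{Supp}(f)$. Hence it is contained in $\opn{Supp}(f)\cap(-\infty,R]=\opn{Supp}(f_{\leq R})$ for any integer $R\geq q$. So it suffices to show that $\opn{ot}(\opn{Supp}(f_{\leq R}))<\omega^\omega$ for each $R\in\bfZ$. Then every proper initial segment of $\opn{Supp}(f)$ has order type $<\omega^\omega$, and therefore $\opn{ot}(\opn{Supp}(f))\leq\omega^\omega$.

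Fix $R\in\bfZ$. By \Cref{lem:local-qtr-full}, $\opn{Coeff}(f_{\leq R},\cdot)$ is QTR with some data $(a,b,c,M,N)$, so $\opn{Supp}(f_{\leq R})\subseteq S_{a,b,c}\cap(-\infty,R]$. Recall that the slice $S_{a,b,c,m}$ lies in $\left(\frac{m-1}{a},\frac{m}{a}\right]$. Hence only the finitely many slices with $-b\leq m\leq aR+1$ meet $(-\infty,R]$, and they occur consecutively in increasing order of $m$. Each slice is order-isomorphic, via $x\mapsto ax-m$, to
$$D_c\coloneqq\{-\norm{\uld}\colon \uld\in\bbP,\ \Psi(\uld)\leq c\}.$$
So the truncation has order type at most $\opn{ot}(D_c)\cdot k$ for some finite $k$. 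It remains to prove $\opn{ot}(D_c)\leq\omega^c$, which gives $\omega^c\cdot k<\omega^\omega$.

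The step requiring care is this combinatorial bound, which I would prove by induction on $c$. For $c=0$, $D_0=\{0\}$. For $c\geq1$, split off the element $0$, which is the maximum. Every other $\uld$ has a smallest index $i$ with $\uld_i\neq0$, say $\uld_i=e\in\{1,\dots,p-1\}$. Then
$$-\norm{\uld}=-ep^{-i}-\norm{\uld'},$$
where $\uld'$ is supported in indices $>i$ and satisfies $\Psi(\uld')\leq c-e\leq c-1$. Since $\norm{\uld'}<p^{-i}$, the block $B_{i,e}$ of such values lies in $\left(-(e+1)p^{-i},-ep^{-i}\right]$. Ordering the pairs $(i,e)$ as $(1,p-1),(1,p-2),\dots,(1,1),(2,p-1),\dots$ arranges the blocks in increasing order in $\bfQ$, and this indexing sequence has order type $\omega$. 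Each block $B_{i,e}$ is order-isomorphic to a subset of $D_{c-1}$ (scale the tail by $p^{i}$), so it has order type $\leq\omega^{c-1}$ by induction. Hence $\opn{ot}(D_c)\leq\omega^{c-1}\cdot\omega+1$. To remove the $+1$, note that $0$ is the supremum of the blocks and is the only element above them; this is harmless, since $\omega^c+1<\omega^\omega$ in any case. Combining this with the reduction in the first paragraph completes the argument.
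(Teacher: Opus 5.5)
Your argument is correct in substance, and it takes a different route from the paper. The paper gives no proof of \Cref{prop:external-bounds}: it imports the bound from \cite[Section 4]{kedlayaPowerSeriesPAdic2001} and lists it among its three external inputs from Kedlaya. You instead derive it from \Cref{lem:local-qtr-full}, and you use only the first QTR condition, $\opn{Supp}(f_{\leq R})\subseteq S_{a,b,c}$. This is not circular, since the proof of \Cref{lem:local-qtr-full} rests only on \Cref{prop:25416} (hence \Cref{thm:47}), \Cref{prop:54845} and \cite[Lemma 6.7]{wang2026padichahnseriessparse}.

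What your route buys is economy of hypotheses: it shows that \Cref{prop:external-bounds} follows from the other two Kedlaya inputs. This is the $p$-adic counterpart, via \Cref{thm:47}, of the paper's footnote that Lisinski's equal-characteristic bound is a quick corollary of Kedlaya's Theorem 15. Your passage through integer truncations also shows transparently why one gets only $\leq\omega^\omega$ and not $<\omega^\omega$. The citation, in turn, is simply shorter.

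You could even skip your induction on $D_c$. A point of $S_{a,b,c}$ has at most $\Psi(\uld)\leq c$ gaps. So \Cref{lem:long-gap-criterion} with $k=c+1$ gives $\opn{ot}(\opn{Supp}(f_{\leq R}))<\omega^{c+1}$ directly, and that lemma does not use \Cref{prop:external-bounds} either.

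One step needs repair. Your inductive claim $\opn{ot}(D_c)\leq\omega^c$ is false: already $D_1=\{0\}\cup\{-p^{-i}\colon i\geq1\}$ has order type $\omega+1$. Your remark that the $+1$ is harmless at the top level does not fix this, because your inductive step feeds the bound $\omega^{c-1}$ for $D_{c-1}$ into the blocks.

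Induct instead on $\opn{ot}(D_c)\leq\omega^c+1$:
\begin{enumerate}
\item each block then has order type at most $\omega^{c-1}+1$;
\item the union of the $\omega$ consecutive blocks has order type at most $(\omega^{c-1}+1)\cdot\omega=\omega^c$, because the extra point is absorbed ($1+\omega^{c-1}=\omega^{c-1}$ for $c\geq2$, and $2\cdot\omega=\omega$ for $c=1$);
\item adding the maximum $0$ gives $\omega^c+1$.
\end{enumerate}
The truncation then has order type at most $(\omega^c+1)\cdot k<\omega^{c+1}<\omega^\omega$, and the rest of your argument goes through unchanged.
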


We finish this section by giving a reduction of the proof of \Cref{thm:main} by introducing the notion of essential rank and essential truncation point of a $p$-adic algebraic Hahn series. The following lemma relates the order type of the support to those of
its bounded truncations.
\begin{lemma}\label{lem:14033}
For $f\in\bfL_p\setminus\{0\}$ with
$\operatorname{ot}(\operatorname{Supp}(f))<\omega^\omega$, put
\[
 W_f=\{r\in\bfN:\text{there exists }B\in\bfZ\text{ with }
       \operatorname{ot}(\operatorname{Supp}(f_{\leq B}))\geq\omega^r\}.
\]
Then:
\begin{enumerate}
\item $\bfr(f)\coloneqq\max W_f$ is a nonnegative integer, called the
\textbf{essential rank} of $f$;
\item for every $B\in\bfZ$,
$\operatorname{ot}(\operatorname{Supp}(f_{\leq B}))<\omega^{\bfr(f)+1}$;
\item if $\bfr(f)=0$, then
$\operatorname{ot}(\operatorname{Supp}(f))\leq\omega$.
\end{enumerate}
\end{lemma}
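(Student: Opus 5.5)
The plan is to use only elementary ordinal arithmetic together with the hypothesis $\operatorname{ot}(\operatorname{Supp}(f))<\omega^\omega$. Write $S=\operatorname{Supp}(f)$ and $S_B=S\cap(-\infty,B]$, so that $\operatorname{Supp}(f_{\leq B})=S_B$.

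\emph{Step (1): $W_f$ is a nonempty bounded subset of $\bfN$.} Since $f\neq0$, the set $S$ is nonempty. Let $q_0=\min S$ and pick $B\in\bfZ$ with $B\geq q_0$. Then $S_B\neq\emptyset$, so $\operatorname{ot}(S_B)\geq1=\omega^0$, and therefore $0\in W_f$. For boundedness, note that $S_B$ is an initial segment of $S$. Hence $\operatorname{ot}(S_B)\leq\operatorname{ot}(S)<\omega^\omega$. Any ordinal below $\omega^\omega$ lies below $\omega^n$ for some $n\in\bfN$. Fix $n$ with $\operatorname{ot}(S)<\omega^n$. Then every $r\in W_f$ satisfies $\omega^r\leq\operatorname{ot}(S_B)<\omega^n$ for some $B$, so $r<n$. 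Thus $W_f$ is finite and nonempty, and $\bfr(f)=\max W_f$ is a well-defined nonnegative integer.

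\emph{Step (2): the bound on truncations.} Suppose that for some $B$ we had $\operatorname{ot}(S_B)\geq\omega^{\bfr(f)+1}$. Then by definition $\bfr(f)+1\in W_f$, which contradicts maximality. Hence $\operatorname{ot}(S_B)<\omega^{\bfr(f)+1}$ for every $B\in\bfZ$.

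\emph{Step (3): the case $\bfr(f)=0$.} By (2), every $S_B$ has order type $<\omega$, so every $S_B$ is finite. Now $S\subseteq\bfQ$ and $S=\bigcup_{B\in\bfZ_{\geq0}}S_B$ is an increasing union of finite sets. So every proper initial segment $\{s\in S: s<s_0\}$ with $s_0\in S$ is contained in $S_B$ for $B=\lceil s_0\rceil$, and is therefore finite. A well-ordered set all of whose proper initial segments are finite has order type $\leq\omega$. Indeed, if $\operatorname{ot}(S)>\omega$, then the element of $S$ at position $\omega$ would have an infinite initial segment. Hence $\operatorname{ot}(S)\leq\omega$.

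There is no real obstacle here. The only points needing care are the standard ordinal facts: an initial segment has order type at most that of the whole set, and $\omega^\omega=\sup_n\omega^n$. Both are immediate from the definitions. Step (3) is the only place where the cofinality of the truncations in $S$ is used, through the fact that every rational lies below some integer $B$.
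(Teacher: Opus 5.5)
Your proof is correct and follows the paper's argument essentially step for step. You get $0\in W_f$ from a nonempty truncation, bound $W_f$ by choosing $s$ with $\operatorname{ot}(\operatorname{Supp}(f))<\omega^s$, deduce (2) from maximality, and derive (3) from the fact that all integer truncations are finite, so every point has only finitely many predecessors. You also spell out a fact the paper leaves implicit: each truncation is an initial segment, so its order type is at most that of the whole support.
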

\begin{proof}
Since $f\neq0$, some integer truncation has nonempty support, so
$0\in W_f$. The hypothesis on the order type gives an integer $s\geq1$
such that $\operatorname{ot}(\operatorname{Supp}(f))<\omega^s$.
Hence $W_f\subseteq\{0,\cdots,s-1\}$, proving (1).
Assertion (2) follows from the maximality of $\bfr(f)$.
If $\bfr(f)=0$, (2) says that every integer truncation has finite
support. Thus every point of $\operatorname{Supp}(f)$ has only finitely
many predecessors, and its order type is at most $\omega$.
\end{proof}

\begin{definition}\label{def:essential-truncation}
For $f$ as in \Cref{lem:14033}, the \textbf{essential truncation point}
$\bfB(f)$ is the least integer $B$ such that
\[
 \operatorname{ot}(\operatorname{Supp}(f_{\leq B}))\geq\omega^{\bfr(f)}.
\]
Such integers exist by the definition of $\bfr(f)$ and are bounded
below by $v_p(f)$, so the least one exists.
\end{definition}

By \Cref{lem:14033}, to prove \Cref{thm:main}, it suffices to show that a nonzero $\bfQ_p$-algebraic Hahn series whose support has order type less than $\omega^\omega$ cannot have positive essential rank.

\section{Essential truncation and uniform polynomial estimation}
\label{sec:bounded-identity}

Let $P\in\bfQ_p[X]$ be the monic minimal polynomial of a nonzero $\bfQ_p$-algebraic Hahn series $f$. Since $P(f)=0$, the $p$-adic valuation $v_p(P(f_{\leq B}))$ tends to infinity as $B$ increases. The aim of this section is to prove the following proposition, which gives a uniform upper bound of $v_p(P(f_{\leq B}))$ with $B>\bfB(f)$ when the essential rank of $f$ is positive, leading to the desired contradiction.

\begin{proposition}
\label{prop:uniform-polynomial-bound}
For the fixed prime $p$, there exists a function
$C\colon\bfZ_{\geq1}^2\to\bfQ_{\geq0}$ with the following property.
For any $f\in\calO_{\overline{\bfQ}_p}\setminus\{0\}$ with
$\operatorname{ot}(\operatorname{Supp}(f))<\omega^\omega$ and $\bfr(f)>0$,
any monic polynomial $P(X)=X^n+\sum_{i=0}^{n-1}a_iX^i\in\bfZ_p[X]$
with $\deg(P)\geq1$, and an integer $B>\bfB(f)$,
one has
\begin{equation}\label{eq:uniform-polynomial-bound}
 v_p(P(f_{\leq B}))\leq \deg(P)\cdot(\bfB(f)+1)+C(\deg(P),\bfr(f)).
\end{equation}
\end{proposition}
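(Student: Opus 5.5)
The plan is to factor $P$ over $\bfC_p\subset\bfL_p$ and bound the contribution of each root separately. Write $g\coloneqq f_{\leq B}$ and $P(X)=\prod_{i=1}^n(X-\alpha_i)$ with $\alpha_i\in\calO_{\bfC_p}$, which holds since $P$ is monic with coefficients in $\bfZ_p$. Then
\[
v_p(P(g))=\sum_{i=1}^n v_p(g-\alpha_i).
\]
So it suffices to produce a function $C_0(n,r)$ such that
\[
v_p(g-\alpha)\leq\bfB(f)+1+C_0(n,r)
\]
for every $\alpha\in\calO_{\overline{\bfQ}_p}$ of degree at most $n=\deg(P)$ over $\bfQ_p$, where $r=\bfr(f)$. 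One then takes $C(n,r)=nC_0(n,r)$. I stress that $\alpha$ may equal $f$ itself, and this is exactly the case that yields the later contradiction. The bound therefore cannot come from a Liouville-type separation of $f$ from its conjugates. It has to come from the shape of the support of $g$ just below $\bfB(f)$, which is the same for every $B>\bfB(f)$.

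The first step is combinatorial. Put $\beta=\bfB(f)$. Minimality of $\beta$ in \Cref{def:essential-truncation} gives $\operatorname{ot}(\operatorname{Supp}(f_{\leq\beta-1}))<\omega^{r}$, while $\operatorname{ot}(\operatorname{Supp}(f_{\leq\beta}))\geq\omega^r$. By ordinal additivity, the piece $\operatorname{Supp}(f)\cap(\beta-1,\beta]$ then has order type $\geq\omega^r\geq\omega$. In particular it contains infinitely many points accumulating to some $q^\ast\leq\beta$. For $B>\beta$ these points all lie in $\operatorname{Supp}(g)$, and they are determined by $f$ alone. By \Cref{lem:local-qtr-full} applied with $R=\beta$, this piece is QTR with some data $(a,b,c,M,N)$, and $r\geq1$ forces genuine use of arbitrarily long zero-runs in the digit vectors.

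The second step compares this with the support of $\alpha$. If $v_p(g-\alpha)>\beta+1+C_0$, then $\alpha_{\leq\beta}=g_{\leq\beta}=f_{\leq\beta}$. Hence $\alpha$ also carries this rank-$r$ piece in $(\beta-1,\beta]$ and has $\bfr(\alpha)\geq r$ and $\bfB(\alpha)=\beta$. The point is that $\alpha$ agrees with $g$ well beyond $\beta$, whereas $g$ has empty support in $(B,\infty)$. I would exploit this gap via Krasner's lemma. Let $\alpha'$ be the element obtained by truncating $\alpha$ at a level $\beta+1+C_0'$. If $C_0'$ exceeds the Krasner radius of $\alpha$, then $\alpha\in\bfQ_p(\alpha')$ and the fields coincide. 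That radius is bounded in terms of $n$ and the discriminant, and the discriminant valuation of a degree-$n$ extension of $\bfQ_p$ is bounded by a function of $n$ alone. This is the source of the dependence of $C$ on $\deg(P)$.

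One would then iterate this truncation. Each iteration forces a field of degree $\leq n$ to contain elements whose supports in windows of length one have order type $\geq\omega^r$ but with controlled QTR data. Kedlaya's estimate \Cref{prop:external-bounds}, run for elements of fixed degree $\leq n$, should bound the number of such windows before $\alpha$'s expansion must terminate or become periodic, at a level $\beta+1+C_0(n,r)$. Termination or periodicity contradicts $\alpha_{\leq\beta}=f_{\leq\beta}$ having order type $\geq\omega^r$ with $r\geq1$; this is in the spirit of \Cref{thm:intro-omega}.

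The main obstacle is this second step: extracting a constant depending only on $(n,r)$ and not on $f$ or on the QTR data of $f$. I would first attempt it by induction on $r$, with the inductive step removing the top-rank accumulation in $(\beta-1,\beta]$ via a Frobenius/$p$-th-root substitution. In the equal-characteristic model this substitution shifts the digit vectors, and it should lower $r$ while changing $n$ and the level only by bounded amounts.
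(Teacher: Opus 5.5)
\textbf{There is a genuine gap.} Your reduction to the per-root bound $v_p(g-\alpha)\leq\bfB(f)+1+C_0(n,r)$ is formally valid, but the proof stops there. Your Step 2 is the whole content of the statement, and none of the tools you propose for it works.

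(i) The Krasner radius $\max_{j\neq1}v_p(\alpha-\alpha_j)$ is controlled by the discriminant of the minimal polynomial of $\alpha$, not by that of the field $\bfQ_p(\alpha)$. Only the latter is bounded in terms of $n$. For instance, with $p$ odd, $\alpha=p^k\sqrt p$ has its conjugate at distance $k+\frac12$.

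(ii) Nothing makes $\alpha'=\alpha_{\leq\beta+1+C_0'}$ algebraic of degree $\leq n$. In fact it has bounded infinite support, so it is transcendental by \Cref{thm:20524}, which is a consequence of the very statement you are proving. Krasner then gives at most $\alpha\in\widehat{\bfQ_p(\alpha')}$, and ``the fields coincide'' has no content.

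(iii) \Cref{prop:external-bounds} is purely qualitative. It gives no count of windows depending on the degree.

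(iv) Algebraic expansions need not terminate or become periodic: every element of $\bfQ_p$ has degree $1$, and its Teichm\"uller digits are arbitrary. Moreover, an algebraic $\alpha$ with $\alpha_{\leq\beta}=f_{\leq\beta}$ would simply have support of type $\omega^\omega$ by \Cref{thm:main}, so no contradiction arises there.

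(v) The substitution $\sum[r_q]p^q\mapsto\sum[r_q^{1/p}]p^{q/p}$ equals $\Theta\circ\varphi^{-1}\circ\Theta^{-1}$. It is not additive (it sends $p=1+\cdots+1$ to $p^{1/p}$), so it preserves neither algebraicity nor degree. Your induction on $r$ has no footing.

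Note also that your per-root target is stronger than what the paper's method yields. That method gives only $v_p(g-\alpha)\leq d(\bfB(f)+1)+C(d,r)$ for $\alpha$ of degree $d$. So the reduction is a strengthening, not a simplification.

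\textbf{The missing idea.} Factoring $P$ over $\calO_{\bfC_p}$ discards the one property of $P$ the argument needs. Its coefficients lie in $\bfZ_p$, so they carry integer exponents and do not disturb the bookkeeping of exponents modulo $T^{-1}\bfZ$. The roots $\alpha_i$, by contrast, have expansions unrelated to the digit structure of $g$.

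The paper works with $P(g)$ directly, in these steps:
\begin{enumerate}
\item It groups the coefficients of $g$ by the digit vectors $T$-representing their exponents, giving elements $A_{\uld}$.
\item For a rigid $\ulu$ placed on $nr$ widely separated blocks, it shows that the component of $P(g)$ in the coset $-\norm{\ulu}/T+T^{-1}\bfZ$ equals the carry-free sum $\calS_n(\ulu)$ coming from $g^n$ alone. Neither the terms $a_kg^k$ with $k<n$ nor carries can reach that coset (\Cref{lem:top-block-carry-free}). Hence $v_p(P(g))\leq v_p(\calS_n(\ulu))$ (\Cref{lem:truncation-coset}).
\item The QTR zero-insertion invariance makes $A_{\uld}$ depend only on the word of patterns (\Cref{lem:block-position-invariance}). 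Thus $\calS_n(\ulu_z)=\Sigma_z\bigl((A_w)_w\bigr)$ is a shuffle polynomial (\Cref{lem:block-interleaving-identity}).
\item A Nullstellensatz identity $DX_w^e=\sum_zQ_{w,z}\Sigma_z$ produces some $z$ with $\calS_n(\ulu_z)\neq0$ and $v_p\leq n\min v_p(A_w)+v_p(D)$. Here $D$ depends only on $(n,r)$, and $\min v_p(A_w)<\bfB(f)+1$ by the choice of the essential truncation point.
\end{enumerate}
These three ingredients are absent from your plan: the coset-isolated carry-free coefficient, the block-translation invariance, and the shuffle/Nullstellensatz bound on cancellation. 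They are exactly what yields a constant independent of $B$ and of the QTR data of $f_{\leq B}$.
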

\begin{remark}
    The right-hand side of the inequality (\ref{eq:uniform-polynomial-bound}) is independent of the truncation point $B>\bfB(f)$ and of the QTR data of $f_{\leq B}$. Let us explain the strategy of the proof of this proposition. Put $g=f_{\leq B}$ and $n=\deg(P)$. In \Cref{sec:coefficient-reduction}, we sort the coefficients of $g$ according to the digit vectors of their exponents and show that a suitable coefficient of the \textbf{carry-free part} of $g^n$ bounds $v_p(P(g))$ from above (cf. \Cref{lem:truncation-coset}). We construct such a coefficient in the remaining subsections: we translate the order-type hypotheses of the support of $g$ into statements about digit vectors in \Cref{sec:digit-blocks}, identify the coefficient with a sum indexed by interleavings of words in \Cref{sec:block-coefficients}, and bound the cancellation in such a sum by a constant depending only on $n$ and $\bfr(f)$ in \Cref{sec:bounded-coefficient}.
\end{remark}

\subsection{Reduction to a carry-free coefficient}
\label{sec:coefficient-reduction}

Throughout this subsection, we fix integers $R\geq0$ and $T\geq1$, and an element $g\in\bfL_p$ whose support $V\coloneqq\opn{Supp}(g)$ is contained in $[0,R]\cap T^{-1}\bfZ[1/p]$. For $\uld\in\bbP$, we put
\begin{equation}\label{eq:aggregate-valuation}
 A_{\uld}\coloneqq\sum_{l=0}^{TR}\left[\opn{Coeff}\left(g,\frac{l-\normd}{T}\right)\right]p^{l/T}\in\breve{\bfZ}_{p,T}\coloneqq\breve{\bfZ}_p[p^{1/T}].
\end{equation}
If $q\in V$ is $T$-represented by $\uld$, then $l\coloneqq Tq+\normd$ is an integer with $0\leq l\leq TR$, because $0\leq q\leq R$ and $0\leq\normd<1$; hence every point of $V$, $T$-represented by $\uld$, contributes a term to the sum \eqref{eq:aggregate-valuation}, and all other terms vanish. Since the nonzero terms of the sum \eqref{eq:aggregate-valuation} have pairwise distinct valuations $l/T$, we see that $A_{\uld}\neq0$ if and only if some point of $V$ is $T$-represented by the digit vector $\uld$, in which case
$$v_p(A_{\uld})=\min\left\{q+\frac{\normd}{T}\colon q\in V\text{ is $T$-represented by }\uld\right\}.$$
In particular, if a point of $V\cap[0,R_*]$ for some integer $R_*$ is $T$-represented by $\uld\in\bbP$, then
\begin{equation}\label{eq:small-truncation-coefficient}
 \  v_p(A_{\uld})<R_*+1.
\end{equation}

For an integer $n\geq1$ and $\ulu\in\bbP$, we set
\begin{equation}\label{eq:carry-free-coefficient}
 \calS_n(\ulu)\coloneqq\sum_{\substack{\uld_1,\cdots,\uld_n\in\bbP\\ \uld_1+\cdots+\uld_n=\ulu}}A_{\uld_1}\cdots A_{\uld_n},
\end{equation}
where the coordinatewise sum is taken over the $n$-tuples of digit vectors. This is a finite sum, as each $\uld_i$ is bounded coordinatewise by $\ulu$. Informally, $\calS_n(\ulu)$ collects those products of $n$ coefficients of $g$ for which the digit vectors of the exponents add up to $\ulu$ without any carry. 

\begin{definition}\label{def:grtn}
We say that $\ulu\in\bbP$ satisfies the $(g,R,T,n)$-\textbf{rigid} condition, if for every integer $0\leq k\leq n$ and all $\uld_1,\cdots,\uld_k\in\bbP$ with $A_{\uld_1},\cdots,A_{\uld_k}\neq0$,
\begin{equation}\label{eq:collapse-condition}
 \tau(\uld_1+\cdots+\uld_k)=\ulu\quad\Longrightarrow\quad k=n\ \text{ and }\ \uld_1+\cdots+\uld_k=\ulu,
\end{equation}
where for $k=0$ the empty sum is the zero vector.
\end{definition}

\begin{proposition}\label{lem:truncation-coset}
Let $g,T,R$ and $(A_{\uld})_{\uld\in\bbP}$ be as above, and let $$P(X)=X^n+\sum_{k=0}^{n-1}a_k X^k\in\bfZ_p[X]$$ be a monic polynomial of degree $n\geq1$. If $\ulu\in\bbP$ satisfies the $(g,R,T,n)$-rigid condition, then $v_p(P(g))\leq v_p(\calS_n(\ulu))$.
\end{proposition}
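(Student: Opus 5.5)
The plan is to expand $g$ in terms of the aggregated coefficients $A_{\uld}$, expand $P(g)$ accordingly, and read off $\calS_n(\ulu)$ as the component of $P(g)$ in a single coset of $\frac1T\bfZ$ in $\bfQ$.

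\textbf{Step 1: rewriting $g$.} Every $q\in V$ is $T$-represented by a unique $\uld\in\bbP$ and can be written as $q=(l-\normd)/T$ with $0\leq l\leq TR$. Hence $p^q=p^{l/T}p^{-\normd/T}$, and grouping the standard expansion of $g$ by digit vectors gives
\[
g=\sum_{\uld\in\bbP}A_{\uld}\,p^{-\normd/T},
\]
a convergent (Hahn-type) sum.

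\textbf{Step 2: expanding $P(g)$.} Raising to the $k$-th power gives
\[
g^k=\sum_{\uld_1,\cdots,\uld_k}A_{\uld_1}\cdots A_{\uld_k}\,p^{-\lVert\uld_1+\cdots+\uld_k\rVert/T}.
\]
By \Cref{lem:20585}, $\lVert\uld_1+\cdots+\uld_k\rVert=\lVert\tau(\uld_1+\cdots+\uld_k)\rVert+m$ for some integer $m$. In fact $m\geq0$, since the left side is $\geq0$ and $\lVert\tau(\cdot)\rVert\in[0,1)$. Grouping by $\ulw=\tau(\uld_1+\cdots+\uld_k)$ gives
\[
P(g)=\sum_{\ulw\in\bbP}B_{\ulw}\,p^{-\lVert\ulw\rVert/T},\qquad B_{\ulw}=\sum_{k=0}^{n}a_k\!\!\sum_{\tau(\uld_1+\cdots+\uld_k)=\ulw}\!\!A_{\uld_1}\cdots A_{\uld_k}\,p^{-m/T},
\]
with $a_n=1$. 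Each $B_{\ulw}$ lies in $\breve{\bfQ}_p(p^{1/T})$, so its valuation lies in $\frac1T\bfZ$.

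\textbf{Step 3: using rigidity.} Under the $(g,R,T,n)$-rigid condition, the only tuples contributing to $B_{\ulu}$ have $k=n$ and $\uld_1+\cdots+\uld_n=\ulu$ exactly, so $m=0$. Terms with some $A_{\uld_i}=0$ vanish anyway. Therefore $B_{\ulu}=\calS_n(\ulu)$.

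\textbf{Step 4: separating cosets.} The digit values $\lVert\ulw\rVert\in[0,1)$ of distinct $\ulw\in\bbP$ are distinct, so the numbers $-\lVert\ulw\rVert/T$ lie in pairwise distinct cosets of $\frac1T\bfZ$. Consequently the nonzero terms $B_{\ulw}p^{-\lVert\ulw\rVert/T}$ have valuations in pairwise distinct cosets of $\frac1T\bfZ$ in $\bfQ$. I would then show that
\[
v_p(P(g))=\min_{\ulw}\Bigl(v_p(B_{\ulw})-\lVert\ulw\rVert/T\Bigr).
\]
Taking $\ulw=\ulu$ yields
\[
v_p(P(g))\leq v_p(\calS_n(\ulu))-\lVert\ulu\rVert/T\leq v_p(\calS_n(\ulu)).
\]

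\textbf{Main obstacle.} The delicate step is justifying the minimum formula for a possibly infinite sum, since $V$ may be infinite and infinitely many $A_{\uld}$ may be nonzero. My first attempt is a coset-projection argument rather than a term-by-term minimum. For $c\in\bfQ/\frac1T\bfZ$, let $\pi_c\colon\bfL_p\to\bfL_p$ keep only the standard-expansion terms whose exponents lie in $c$.

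Multiplication by an element of $\breve{\bfQ}_p(p^{1/T})$ only shifts exponents by elements of $\frac1T\bfZ$. Teichmüller carries only shift exponents by integers, which also lie in $\frac1T\bfZ$. Hence $\pi_c$ should be additive and continuous, and it satisfies $v_p(\pi_c(h))\geq v_p(h)$. To check additivity, I would use the description $\bfL_p=\breve{\bfZ}_p\pparen{t^\bfQ}/\calN$: the coset decomposition of $\breve{\bfZ}_p\pparen{t^\bfQ}$ is obviously additive, and $\calN$ is stable under it, because the defining null condition only involves exponents $g+n$ with $n\in\bfZ$.

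Applying $\pi_c$ with $c=-\lVert\ulu\rVert/T+\frac1T\bfZ$ to the expansion of $P(g)$ gives $\pi_c(P(g))=\calS_n(\ulu)\,p^{-\lVert\ulu\rVert/T}$. Combined with $v_p(\pi_c(P(g)))\geq v_p(P(g))$, this yields the claimed inequality without needing any finiteness of the sum.
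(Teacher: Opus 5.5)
Your strategy is the paper's. Up to multiplication by $p^{\lVert\ulu\rVert/T}$, your identity $\pi_c(P(g))=\calS_n(\ulu)\,p^{-\lVert\ulu\rVert/T}$ is exactly \eqref{eq:coset-collapse}. The paper also proves it through $\breve{\bfZ}_p\pparen{t^\bfQ}/\calN$, using, as you do, that the null condition only couples exponents differing by integers.

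There is a gap in how you propose to evaluate $\pi_c(P(g))$. Additivity and continuity of $\pi_c$ do not let you apply it term by term to the expansions of Steps~1--2. Those are not convergent sums in $\bfL_p$: each term $A_{\uld}p^{-\lVert\uld\rVert/T}$ has valuation in $[0,R]$, and infinitely many are nonzero as soon as $V$ is infinite. Moreover, multiplying such infinite sums out inside $\bfL_p$ involves Teichm\"uller carries. So Step~2, read in $\bfL_p$, is an unproved assertion about the coset components of $P(g)$, and that assertion is the whole content of the proposition. Relatedly, your remark that no finiteness is needed is misleading: what makes the argument work is that the relevant coset component involves only finitely many tuples.

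The repair uses only tools already in your proposal, but you must apply the projection in the lift rather than in $\bfL_p$. Put $\wtilde g=\sum_{q\in V}[\opn{Coeff}(g,q)]t^q\in\breve{\bfZ}_p\pparen{t^\bfQ}$. The quotient map is a ring homomorphism, so $P(\wtilde g)$ is a lift of $P(g)$. By the coset-stability of $\calN$, $\pi_c(P(g))$ is therefore the image of the $c$-part of $P(\wtilde g)$. In $\breve{\bfZ}_p\pparen{t^\bfQ}$, the coefficient of $t^q$ in $\wtilde g^{\,k}$ is a finite sum over $(q_1,\cdots,q_k)\in V^k$ with $q_1+\cdots+q_k=q$. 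Hence your Step~3 rigidity argument applies coefficient by coefficient: for $q\in c$, only $k=n$ and tuples whose $T$-representing digit vectors satisfy $\uld_1+\cdots+\uld_n=\ulu$ contribute. So the $c$-part of $P(\wtilde g)$ is a finite sum, over these finitely many $(\uld_1,\cdots,\uld_n)$, of products of the coset pieces of $\wtilde g$. Its image is $\calS_n(\ulu)\,p^{-\lVert\ulu\rVert/T}$. This is precisely the paper's argument, which phrases the projection as the $\breve{\bfQ}_{p,T}$-valued map $\sum_q c_qt^q\mapsto\sum_l c_{(l-\lVert\ulu\rVert)/T}\,p^{l/T}$, shown to vanish on the null series $P(\wtilde g)-\wtilde h$. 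Once this is in place, your final two lines conclude correctly, and the minimum formula of Step~4 is not needed.
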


\begin{proof}
We first prove the identity
\begin{equation}\label{eq:coset-collapse}
 \sum_{l\in\bfZ}\left[\opn{Coeff}\left(P(g),\frac{l-\norm{\ulu}}{T}\right)\right]p^{l/T}=\calS_n(\ulu).
\end{equation}
Let us first make sense of the left-hand side. Since $g$ and $P$ have integral coefficients, so does $P(g)$; thus $\opn{Supp}(P(g))$ is a well-ordered subset of $[0,\infty)$, and it meets the set $-\norm{\ulu}/T+T^{-1}\bfZ$ in a well-ordered set of points $(l-\norm{\ulu})/T$ with $l$ bounded below. Therefore, the nonzero terms of the left-hand side $\sum_{l\in\bfZ}\left[\opn{Coeff}\left(P(g),\frac{l-\norm{\ulu}}{T}\right)\right]p^{l/T}$ of the equality \eqref{eq:coset-collapse} have valuations $l/T$ which are bounded below and tend to infinity, so this series converges in the complete field $\breve{\bfQ}_{p,T}\coloneqq\breve{\bfQ}_p(p^{1/T})$. %

Consider the Hahn series $\wtilde g\coloneqq\sum_{q\in V}[\opn{Coeff}(g,q)]t^q$ and $\wtilde h\coloneqq\sum_{q\in\bfQ}[\opn{Coeff}(P(g),q)]t^q$ in $\breve{\bfZ}_p\pparen{t^\bfQ}$. Both $P(\wtilde g)$ and $\wtilde h$ have image $P(g)$ in $\bfL_p=\breve{\bfZ}_p\pparen{t^\bfQ}/\calN$, so their difference is a null series. As a result, if we write $P(\wtilde g)=\sum_{q\in\bfQ}c_qt^q$ with $c_q\in\breve{\bfZ}_p$, then for every $q_0\in\bfQ$, we have
\begin{equation}\label{eq:648653}
\sum_{i\in\bfZ}c_{q_0+i}\,p^{i}=\sum_{i\in\bfZ}\left[\opn{Coeff}(P(g),q_0+i)\right]p^{i}.
\end{equation}
Note that the left-hand side of the above equality is a finite sum, because $\opn{Supp}(P(\wtilde g))\subseteq[0,nR]$, while the right-hand side is a convergent series in $\breve{\bfQ}_p$. 

Now let $0\leq j<T$. Applying the identity \eqref{eq:648653} to $q_0=(j-\norm{\ulu})/T$ and multiplying by $p^{j/T}$, we get
$$\sum_{i\in\bfZ}c_{(j+Ti-\norm{\ulu})/T}\,p^{(j+Ti)/T}=\sum_{i\in\bfZ}\left[\opn{Coeff}\left(P(g),\frac{j+Ti-\norm{\ulu}}{T}\right)\right]p^{(j+Ti)/T}.$$
Summing these $T$ identities over $j$, we obtain: 
\[\sum_{0\leq j< T}\sum_{i\in\bfZ}c_{(j+Ti-\norm{\ulu})/T}\,p^{(j+Ti)/T}=\sum_{0\leq j<T}\sum_{i\in\bfZ}\left[\opn{Coeff}\left(P(g),\frac{j+Ti-\norm{\ulu}}{T}\right)\right]p^{(j+Ti)/T}.\] Since every $l\in\bfZ$ can be written uniquely as $l=j+Ti$ with $0\leq j<T$ and $i\in\bfZ$, the left-hand sides add up to the finite sum $\sum_{l\in\bfZ}c_{(l-\norm{\ulu})/T}\,p^{l/T}$. The right-hand sides for $0\leq j< T$ add up to the left-hand side of the equality \eqref{eq:coset-collapse}: indeed, the terms of that series tend to zero, so it converges unconditionally, and grouping its terms according to the residue of $l$ modulo $T$ does not change its sum. Hence
$$\sum_{l\in\bfZ}\left[\opn{Coeff}\left(P(g),\frac{l-\norm{\ulu}}{T}\right)\right]p^{l/T}=\sum_{l\in\bfZ}c_{(l-\norm{\ulu})/T}\,p^{l/T}.$$

We now compute the coefficients $c_q$ by expanding $P(\wtilde g)=\wtilde g^{\,n}+\sum_{k=0}^{n-1}a_k\wtilde g^{\,k}$. For $0\leq k\leq n$, the coefficient of $t^q$ in $\wtilde g^{\,k}$ is the sum of $\prod_{i=1}^k[\opn{Coeff}(g,q_i)]$ over the finitely many $(q_1,\cdots,q_k)\in V^k$ with $q_1+\cdots+q_k=q$; for $k=0$ the only such tuple is the empty one, with $q=0$. Let $(q_1,\cdots,q_k)$ be such a tuple, and let $\uld_i$ be the digit vector $T$-representing $q_i$. As a result, $A_{\uld_i}\neq0$ and $q_i=(l_i-\norm{\uld_i})/T$ with $0\leq l_i\leq TR$. Since $\norm{\uld_1+\cdots+\uld_k}=\norm{\uld_1}+\cdots+\norm{\uld_k}$, the exponent $q_1+\cdots+q_k$ lies in $-\norm{\ulu}/T+T^{-1}\bfZ$ if and only if $\tau(\uld_1+\cdots+\uld_k)=\ulu$ by \Cref{lem:20585}. Since $\ulu$ satisfies the $(g,R,T,n)$-rigid condition, we have $k=n$ and $\uld_1+\cdots+\uld_n=\ulu$. As a result, only the term $\wtilde g^{\,n}$ contributes to $\sum_{l\in \bfZ}c_{(l-\norm{\ulu})/T}\,p^{l/T}$, and this sum equals
\begin{align*}
&\sum_{\substack{\uld_1,\cdots,\uld_n\in\bbP\\ \uld_1+\cdots+\uld_n=\ulu}}\sum_{0\leq l_1,\cdots,l_n\leq TR}\prod_{i=1}^n\left[\opn{Coeff}\left(g,\frac{l_i-\norm{\uld_i}}{T}\right)\right]p^{l_i/T}\\
=&\sum_{\substack{\uld_1,\cdots,\uld_n\in\bbP\\ \uld_1+\cdots+\uld_n=\ulu}}\prod_{i=1}^n\sum_{l_i=0}^{TR}\left[\opn{Coeff}\left(g,\frac{l_i-\norm{\uld_i}}{T}\right)\right]p^{l_i/T}\\
=&\sum_{\substack{\uld_1,\cdots,\uld_n\in\bbP\\ \uld_1+\cdots+\uld_n=\ulu}}\prod_{i=1}^nA_{\uld_i}=\calS_n(\ulu),
\end{align*}
where we have used that the terms with $(l_i-\norm{\uld_i})/T\notin V$ vanish. This proves the equality \eqref{eq:coset-collapse}.

Finally, if $P(g)=0$, then $\calS_n(\ulu)=0$. Otherwise, a term $\left[\opn{Coeff}\left(P(g),\frac{l-\norm{\ulu}}{T}\right)\right]p^{l/T}$ on the left-hand side of the equality \eqref{eq:coset-collapse} is nonzero only if $(l-\norm{\ulu})/T\in\opn{Supp}(P(g))$. As a result, we have $l/T\geq v_p(P(g))+\norm{\ulu}/T\geq v_p(P(g))$. Hence, by the ultrametric inequality, which passes to convergent series, the left-hand side of the equality \eqref{eq:coset-collapse} has valuation at least $v_p(P(g))$, and so does $\calS_n(\ulu)$.
\end{proof}

By \Cref{lem:truncation-coset}, in order to prove \Cref{prop:uniform-polynomial-bound} it suffices to find, for $g=f_{\leq B}$ and a suitable $T$, a digit vector $\ulu$ satisfying the condition \eqref{eq:collapse-condition} such that $\calS_n(\ulu)$ is nonzero of valuation at most $n(\bfB(f)+1)+C$, with $C$ independent of $B$. 

\subsection{From order type to separated digit blocks}
\label{sec:long-gaps}\label{sec:digit-blocks}
Let $V$ be a bounded QTR set with QTR data $(a,b,c,M,N)$ and of order type $<\omega^{r+1}$ with $r\in \bfN_{>0}$. This subsection is purely combinatorial: we relate the order type of a bounded QTR set $V$ to the digit vectors of certain points in $V$, which are possible candidates required in the previous subsection.

\begin{definition}\label{def:zero-gaps}
Let $\uld\in\bbP$ be a nonzero digit vector with nonzero positions $\xi_1<\cdots<\xi_\rho$, and put $\xi_0=0$. The \textbf{zero gaps} of $\uld$ are the $\rho$ (possibly empty) strings of zeros occupying the positions $\xi_{j-1}+1,\cdots,\xi_j-1$, for $1\leq j\leq\rho$; the $j$-th gap has length $\xi_j-\xi_{j-1}-1$, and it is called \textbf{long} if its length is at least $M$. Trailing zeros are not counted, and the zero vector has no gaps.
\end{definition}

Every $q\in a^{-1}\bfZ[1/p]$ can be written uniquely as $q=(m-\normd)/a$ with $m\in\bfZ$ and $\uld\in\bbP$, namely with $m=\lceil aq\rceil$ and $\uld$ the digit vector $a$-representing $q$. If $q\in S_{a,b,c}$, then $m\geq-b$ and $\Psi(\uld)\leq c$ by the definition of $S_{a,b,c}$. The zero gaps of $q$ are, by definition, those of this $\uld$.

\begin{lemma}\label{lem:long-gap-criterion}
Let $U\subseteq\bfQ$ be a bounded QTR set with data $(a,b,c,M,N)$, and let $k\geq1$ be an integer. Then $\opn{ot}(U)<\omega^k$ if and only if every point of $U$ has fewer than $k$ long gaps. Consequently, if $r\geq1$ is an integer with $\omega^r\leq\opn{ot}(U)<\omega^{r+1}$, then every point of $U$ has at most $r$ long gaps, and some point of $U$ has exactly $r$ long gaps.
\end{lemma}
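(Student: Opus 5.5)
We prove both directions by induction on $k$. The tool is a map sending points of $U$ to their digit vectors, under which ordinal structure turns into digit structure. Since $U$ is bounded, $m=\lceil aq\rceil$ ranges over a finite set $\{-b,\dots,m_{\max}\}$. The set $U$ is therefore a finite union of its slices $U_m=U\cap S_{a,b,c,m}$, and each slice lies in an interval $((m-1)/a,m/a]$, these intervals being ordered by $m$. Thus $\opn{ot}(U)$ is a finite ordinal sum of the $\opn{ot}(U_m)$. Because $\omega^k$ is additively indecomposable, it suffices to prove the equivalence slice by slice. Within a slice, $q\mapsto\norm{\uld}$ reverses order. So we must understand well-ordered (for the order of $q$) sets of digit vectors with $\Psi\leq c$, ordered by decreasing digit value.

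\emph{Direction ``a point with $k$ long gaps forces $\opn{ot}(U)\geq\omega^k$''.} Let $q=(m-\norm{\uld})/a\in U$ have long gaps $j_1<\dots<j_k$. Condition (2) of the QTR definition applies to each long gap, each being a run of at least $M$ zeros. Inserting $N$ zeros repeatedly into gap $j_i$ (say $e_i$ times, with $e_i\geq0$) keeps the coefficient nonzero. Hence every point $q(e_1,\dots,e_k)$ lies in $U$. Inserting zeros before a nonzero digit shifts that digit to a deeper position, which decreases $\norm{\uld}$ and so increases $q$. Enlarging a gap that sits earlier in the vector dominates any change to later gaps, because it shifts all subsequent digits. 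I would check that $(e_1,\dots,e_k)\mapsto q(e_1,\dots,e_k)$ is an order embedding of $\bfN^k$ with the lexicographic order, with $e_1$ most significant. This order has type $\omega^k$, so $\opn{ot}(U)\geq\omega^k$. One must verify that inserting zeros keeps the gaps long and the digit sum unchanged; both are clear.

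\emph{Direction ``every point has fewer than $k$ long gaps implies $\opn{ot}(U)<\omega^k$''.} This is the harder direction. I would bound the order type of the whole set
\[
\Sigma_{c,k}=\{\uld\in\bbP:\Psi(\uld)\leq c,\ \text{fewer than }k\text{ long gaps}\},
\]
ordered by decreasing $\norm{\uld}$. The proof is a double induction on $c$ and $k$. Split a digit vector at its first nonzero digit. The first gap is either short, with length less than $M$, or long. If it is short, there are finitely many choices for the first position and the first digit. The remaining tail has digit sum at most $c-1$ and the same number of long gaps. If it is long, the remaining tail has at most $k-2$ long gaps. In the short case, vectors sharing the same first position and digit form a block of type $<\omega^k$ by induction on $c$, and these finitely many blocks are ordered one after another. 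In the long case the first position $\xi_1$ is unbounded. The vectors with a given $\xi_1$ form a block of type $<\omega^{k-1}$ by induction on $k$, and blocks with smaller $\xi_1$, which have larger value, come first in the order. So the long case contributes a type of at most $\omega\cdot\omega^{k-2}$-type sums, giving at most $\omega^{k-1}$ in total, which is $<\omega^k$. Hence the whole set has type $<\omega^k$. The base case $k=1$ asks for finitely many vectors with no long gap and $\Psi\leq c$; this holds because all positions are then bounded by $c(M+1)$. The subtle step, which I expect to be the main obstacle, is making the order arithmetic rigorous. Decreasing value must correspond to the block decomposition: blocks with larger first contributions precede others, and the tail order within a block must match the tail's own order. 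The ambiguity in where a gap ends within a block with a fixed prefix needs care.

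The consequence follows from the equivalence. Applying it with $k=r+1$ shows every point has at most $r$ long gaps. Applying it with $k=r$ shows that, since $\opn{ot}(U)\not<\omega^r$, some point has at least $r$, hence exactly $r$, long gaps.
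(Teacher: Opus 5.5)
Your proposal is correct. The forward direction is exactly the paper's: a point with $k$ long gaps gives an order embedding of $\bfN^k$, ordered lexicographically, into $U$, obtained by inserting multiples of $N$ zeros into the chosen gaps. For the converse you take a genuinely different route.

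\emph{The paper's route.} It partitions $U$ at once into finitely many ``shape'' classes, recording the integer $m$, the sequence of nonzero digits, which gaps are long, and the lengths of the short gaps. Each class is identified with a subset of $\bfN^j$, $j<k$, through the lengths of its long gaps, by the same mechanism as the forward direction. Because these classes interleave in $\bfQ$, the paper then needs a cited fact (from Altman): a finite union of sets of type $<\omega^k$ has type $<\omega^k$.

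\emph{Your route.} Recursion on the first nonzero digit cuts $\Sigma_{c,k}$ into consecutive intervals. The vectors with first nonzero digit $d$ at position $\xi_1$ have values in $[dp^{-\xi_1},(d+1)p^{-\xi_1})$, and these intervals are pairwise disjoint. The identity $\norm{\uld}=dp^{-\xi_1}+p^{-\xi_1}\norm{\underline{t}}$, with $\underline{t}$ the shifted tail, makes the tail map order-preserving. This settles the obstacle you flagged; there is no ambiguity about where gaps end.

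\emph{What each approach buys.} You need only ordinal sums, the additive indecomposability of $\omega^k$, and the fact that an $\omega$-indexed sum of ordinals below $\omega^{k-1}$ is at most $\omega^{k-1}$. That last fact is what your phrase ``$\omega\cdot\omega^{k-2}$-type sums'' should say. The cost is an induction. Induction on $c$ alone suffices, with base case $c=0$ and the long case vacuous when $k=1$. The paper's argument is non-inductive and symmetric with the forward direction, at the price of the non-trivial finite-union lemma. In both arguments the QTR recurrence is used only in the forward direction, and the converse is really a statement about all digit vectors with $\Psi\leq c$.

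A minor point: since short gaps have length at most $M-1$, positions are bounded by $cM$. Your bound $c(M+1)$ is weaker but harmless.
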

\begin{proof}
In this proof, tuples of integers of the same length are compared lexicographically.%

Suppose first that some point $q=(m-\normd)/a$ of $U$ has at least $k$ long gaps, and choose $k$ of them. For any $\uln=(n_1,\cdots,n_k)\in\bfN^k$, let $q_{\uln}\in U$ be the point obtained from $q$ by inserting $Nn_j$ zeros into the $j$-th chosen gap for $1\leq j\leq k$, keeping $m$, and let $\uld^{(\uln)}$ be its digit vector. If $\uln<\uln'$ lexicographically, say $n_j<n'_j$ at the first differing coordinate $j$, then $\uld^{(\uln)}$ and $\uld^{(\uln')}$ agree in all positions up to the end of the $j$-th chosen gap of $\uld^{(\uln)}$, and at the following position $\uld^{(\uln)}$ has a nonzero digit while $\uld^{(\uln')}$ still has a zero. As a result, one has $\norm{\uld^{(\uln)}}>\norm{\uld^{(\uln')}}$, i.e. $q_{\uln}<q_{\uln'}$. This shows that the map $\uln\longmapsto q_{\uln}$ is an order embedding of $\bfN^k$, ordered lexicographically, into $U$. Since $\bfN^k$ has order type $\omega^k$, we get $\opn{ot}(U)\geq\omega^k$.

Conversely, suppose that every point of $U$ has fewer than $k$ long gaps. Partition $U$ into classes by attaching to each point $q=(m-\normd)/a$ the following data: the integer $m$, the sequence of nonzero digits of $\uld$ in their order, the set of indices $j$ for which the $j$-th gap is long, and the lengths of the remaining gaps. There are only finitely many classes: $m$ ranges over the integers between $-b$ and $\lceil a\sup U\rceil$, the number of nonzero digits is at most $\Psi(\uld)\leq c$, and the gaps that are not long have length less than $M$. Consider a class with $j<k$ long gaps. A point of this class is determined by the lengths of its long gaps, and subtracting $M$ from each of these lengths identifies the class with a subset of $\bfN^j$. By similar discussion as above, this identification is an order embedding for the lexicographic order, and the class has order type at most $\omega^j<\omega^k$. Since $U$ is a finite union of such classes, one has $\opn{ot}(U)<\omega^k$ by \cite[Proposition 5.3(1)]{altman-order}.

Applying the equivalence with $k=r+1$ and with $k=r$ gives the last assertion.
\end{proof}

\begin{definition}\label{def:digit-patterns}
Let $s\geq1$ be an integer. 
\begin{enumerate}
\item A \textbf{digit pattern} of length $s$ is a tuple $\alpha=(\alpha_1,\cdots,\alpha_s)\in\{0,\cdots,p-1\}^s$; it is \textbf{nonzero} if some $\alpha_i$ is nonzero. 
\item For integers $A\leq B$, we write $[A,B]_{\bfZ}\coloneqq\{k\in\bfZ\colon A\leq k\leq B\}$. A \textbf{block} of length $s$ is a set $[e,e+s-1]_{\bfZ}$ of $s$ consecutive positive integers. 
\item \textbf{Placing} a digit pattern $\alpha$ on the block $[e,e+s-1]_{\bfZ}$ means assigning the digit $\alpha_i$ to the position $e+i-1$ for $1\leq i\leq s$.
\end{enumerate}
\end{definition}

\begin{lemma}\label{lem:tail-clusters}
Let $h\geq cM$ be an integer and $q\in V$. Then the support of the digit vector $ap^h$-representing $q$ is the union of at most $r$ nonempty sets of diameter at most $cM$.
\end{lemma}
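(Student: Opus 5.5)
The plan is to identify the digit vector $ap^h$-representing $q$ as a shift of the digit vector $a$-representing $q$. Then split the support of the latter into clusters at its long gaps, and use \Cref{lem:long-gap-criterion} to bound the number of clusters that survive the shift.

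\emph{Step 1 (the shift).} Write $q=(m-\normd)/a$ with $m\in\bfZ$ and $\uld\in\bbP$ the digit vector $a$-representing $q$. Since $q\in V\subseteq S_{a,b,c}$, we have $\Psi(\uld)\leq c$, so $\uld$ has at most $c$ nonzero positions $\xi_1<\cdots<\xi_\rho$. We compute
\[
ap^hq=p^hm-\sum_{i\leq h}\uld_ip^{h-i}-\sum_{i>h}\uld_ip^{h-i}.
\]
The first two terms are integers. The last sum equals $\lVert\uld'\rVert$ with $\uld'_j\coloneqq\uld_{j+h}$, and $\uld'\in\bbP$. By uniqueness of representing digit vectors, $\uld'$ is the digit vector $ap^h$-representing $q$. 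Hence
\[
\opn{supp}(\uld')=\{\xi_j-h\colon \xi_j>h\},
\]
i.e. it is a translate of $\opn{supp}(\uld)\cap(h,\infty)$.

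\emph{Step 2 (clusters).} Partition $\{\xi_1,\cdots,\xi_\rho\}$ into maximal runs of consecutive nonzero positions in which successive positions differ by at most $M$. A new run starts at $\xi_j$ with $j\geq2$ exactly when the $j$-th zero gap is long (length $\geq M$). Each run has at most $c$ elements with consecutive differences at most $M$, so its diameter is at most $(c-1)M\leq cM$. Intersecting with $(h,\infty)$ and translating preserves this bound, and we discard the runs that become empty.

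\emph{Step 3 (counting).} Since $\opn{ot}(V)<\omega^{r+1}$, \Cref{lem:long-gap-criterion} with $k=r+1$ shows that $\uld$ has at most $r$ long gaps. The number of runs is $1+\#\{j\geq2\colon\text{the $j$-th gap is long}\}$, and we split into two cases according to the first gap.
\begin{enumerate}
\item If the first gap (positions $1,\cdots,\xi_1-1$) is long, it is itself counted among the long gaps. Then the number of runs is at most the total number of long gaps, hence at most $r$.
\item If the first gap is not long, then $\xi_1\leq M$. The first run is then contained in $[1,\xi_1+(c-1)M]\subseteq[1,cM]\subseteq[1,h]$, using $h\geq cM$, so it disappears after the shift. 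The remaining runs number at most $\#\{j\geq2\colon\text{the $j$-th gap is long}\}\leq r$.
\end{enumerate}
In either case $\opn{supp}(\uld')$ is a union of at most $r$ nonempty sets of diameter at most $cM$.

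The main point requiring care is Step 3(2): exactly where the hypothesis $h\geq cM$ enters, namely to kill the leading cluster, which would otherwise give $r+1$ clusters.
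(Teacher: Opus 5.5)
Your proof is correct and follows essentially the paper's own argument. You use the same shift, identifying the $ap^h$-representing vector as $(d_{h+1},d_{h+2},\cdots)$, the same cutting of the nonzero positions at the long gaps, and the same use of $h\geq cM$ to discard the leading cluster; the paper merges your two cases into one initial segment, which is empty exactly when the first gap is long. The only omission is the trivial case $\uld=0$: there your count $1+\#\{\cdots\}$ of runs does not apply, but the support is empty, so the statement holds.
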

\begin{proof}
Write $q=(m-\normd)/a$ as in \Cref{def:zero-gaps}, so that $ap^hq=p^hm-p^h\normd$. Since
$$p^h\normd=\sum_{i=1}^hd_ip^{h-i}+\sum_{i\geq1}d_{h+i}p^{-i},$$
we have $ap^hq+\sum_{i\geq1}d_{h+i}p^{-i}\in\bfZ$, so the digit vector $ap^h$-representing $q$ is $(d_{h+1},d_{h+2},\cdots)$, which is obtained from $\uld$ by deleting the first $h$ digits. If $\uld=0$ there is nothing to prove. Otherwise, let $\xi_1<\cdots<\xi_\rho$ be the nonzero positions of $\uld$; then $\rho\leq\Psi(\uld)\leq c$, and $\uld$ has at most $r$ long gaps by \Cref{lem:long-gap-criterion}. Cut the sequence $\xi_1,\cdots,\xi_\rho$ into consecutive segments immediately after each long gap. This gives at most $r+1$ segments: an initial segment, consisting of the nonzero positions not preceded by any long gap, which may be empty (this happens precisely when the first gap is long); and one nonempty segment beginning right after each long gap. Within a segment, consecutive positions are at distance at most $M$, so each segment has diameter at most $(\rho-1)M<cM$. Moreover, the $k$-th position of the initial segment is at most $kM$, so this segment is contained in $[1,\rho M]_{\bfZ}\subseteq[1,h]_{\bfZ}$ and disappears when the first $h$ digits are deleted. Hence the support of $(d_{h+1},d_{h+2},\cdots)$ is the union of the sets $\{\xi-h\colon\xi\in S,\ \xi>h\}$, where $S$ runs over the at most $r$ remaining segments; discarding the empty ones, these are at most $r$ nonempty sets of diameter at most $cM$.
\end{proof}

The next lemma constructs points whose digit vectors, after a suitable rescaling, consist of fixed nonzero patterns that can be moved along a widely spaced grid of blocks while preserving their order.

\begin{lemma}\label{lem:separated-placements}
Let $R_*$ be an integer such that $U\coloneqq V\cap(-\infty,R_*]$ has order type at least $\omega^r$, and let $n\geq1$ be an integer. Then there exist 
\begin{enumerate}
    \item integers $h\geq cM$, $s\geq1$, $\beta\geq1$;
    \item a positive multiple $\Delta$ of $N$ with
\begin{equation}\label{eq:block-spacing}
 \Delta>(s-1)+n+2cM+M,
\end{equation}
\item nonzero digit patterns $\alpha^{(1)},\cdots,\alpha^{(r)}$ of length $s$,
\end{enumerate}
such that if we set $T\coloneqq ap^h$ and $B_j\coloneqq[\beta+j\Delta,\beta+j\Delta+s-1]_{\bfZ}$ for $j\geq1$, then for all integers $1\leq j_1<\cdots<j_r$, there exists a point of $U$ which is $T$-represented by the digit vector obtained by the placement of $\alpha^{(i)}$ on $B_{j_i}$ for $1\leq i\leq r$ and zero elsewhere.
\end{lemma}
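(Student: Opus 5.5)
By \Cref{lem:long-gap-criterion}, applied to the bounded QTR set $U$ (with the same data $(a,b,c,M,N)$), some point $q_0=(m_0-\norm{\uld})/a\in U$ has exactly $r$ long gaps. Indeed, $\omega^r\leq\opn{ot}(U)\leq\opn{ot}(V)<\omega^{r+1}$, and every point of $V$ has at most $r$ long gaps. The idea is to pump these $r$ long gaps by multiples of $N$ using the QTR property, keeping $m_0$ fixed so that the resulting points stay in $U$. We then rescale by $T=ap^h$ so that the initial segment of digits is deleted, as in the proof of \Cref{lem:tail-clusters}.

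\textbf{Step 1: the data.} Let $\xi_1<\cdots<\xi_\rho$ be the nonzero positions of $\uld$. Cut them, as in \Cref{lem:tail-clusters}, into an initial segment together with $r$ nonempty segments $S_1,\dots,S_r$, each beginning right after a long gap. Each $S_i$ has diameter less than $cM$. Let $s$ be a common length, say $s=cM$, and let $\alpha^{(i)}$ be the digits of $\uld$ on $[\min S_i,\min S_i+s-1]_{\bfZ}$. This window contains only the digits of $S_i$, after enlarging the long gaps if necessary so that the next segment starts beyond it. Each $\alpha^{(i)}$ is nonzero. Choose $\Delta$ a multiple of $N$ satisfying \eqref{eq:block-spacing}. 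Choose $h\geq cM$ large enough that $h\geq\rho M$, which kills the initial segment, and $h\geq\xi_\rho$. Set $\beta$ accordingly (see Step 2).

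\textbf{Step 2: pumping.} By the QTR condition (2), repeated insertion of $N$ zeros into a run of at least $M$ zeros preserves the value of the coefficient function, with $m$ fixed. So for any $n_1,\dots,n_r\geq0$, inserting $Nn_i$ zeros into the $i$-th long gap gives a point $q'$ of $V$. Since the digit value decreases, $q'\leq q_0$ up to the same $m$, hence $q'\leq q_0\leq R_*$ and $q'\in U$. Before pumping, first insert a large number of multiples of $N$ zeros into each long gap, and also into the first gap if it is long. This places $S_i$ starting at positions congruent to fixed residues modulo $\Delta$ and aligned to $\beta+j\Delta$ after the shift by $h$. Concretely, first pad each long gap by a multiple of $N$ so that the start of $S_i$, minus $h$, has the form $\beta+j^{(0)}_i\Delta$ for a single $\beta$. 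This is possible modulo $\Delta$ only if the residues can be adjusted. Since $\Delta$ is a multiple of $N$ and pumping moves positions only by multiples of $N$, residues mod $N$ cannot be changed by pumping.

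\textbf{Main obstacle.} The main difficulty is exactly this alignment: making all $r$ patterns sit on one common grid $\beta+j\Delta$. My fix is to absorb the residues into the patterns. Take $\beta$ to be the start of the leftmost segment's window after deleting $h$ digits, reduced modulo $N$, and enlarge each window leftward by at most $N-1$ zeros so that every window starts at a position $\equiv\beta\pmod N$. This requires the gap preceding each segment to have length at least $N+M$, which can be arranged by a preliminary pumping. Take $s=cM+N$ as the window length; it still fits inside \eqref{eq:block-spacing} once $\Delta$ is chosen after $s$. Then choose $\Delta$ a multiple of $N$; all positions $\beta+j\Delta$ with $j$ large lie in the right residue class, and pumping by multiples of $N$ reaches any such configuration with $j_1<\cdots<j_r$. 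Because consecutive blocks are $\Delta>s$ apart, each gap stays long. Finally, choose $h$ so that the starting blocks correspond to $j\geq1$ minimal. Reindex $j$ by subtracting a constant, absorbing it into $\beta$, so that every $1\leq j_1<\cdots<j_r$ is realized. The $T$-representing digit vector is then exactly the placement of the $\alpha^{(i)}$ on the blocks $B_{j_i}$, by the digit-deletion computation in \Cref{lem:tail-clusters}.
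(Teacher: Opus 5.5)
\textbf{The gap: the alignment fix handles residues modulo $N$ but not distances.} Matching residues modulo $N$ is only half of the alignment problem. Pumping only inserts zeros, so it can never decrease the distance between the windows of two consecutive segments. Write $w_i$ for the starting position of the $i$-th window, after your preliminary pumping and leftward padding. To send the $i$-th window to $h+\beta+j_i\Delta$ for every $i$, you must insert $h+\beta+j_1\Delta-w_1$ zeros into the first long gap. For $i\geq2$ you must insert $(j_i-j_{i-1})\Delta-(w_i-w_{i-1})$ zeros into the $i$-th long gap, and this is negative as soon as $j_i=j_{i-1}+1$ and $w_i-w_{i-1}>\Delta$.

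Nothing in your construction rules this out. You only require $\Delta$ to satisfy \eqref{eq:block-spacing} with $s=cM+N$. The distances $w_i-w_{i-1}$, however, depend on the gap lengths of the particular point $q_0$, which can be arbitrarily large, and your preliminary pumping only makes them larger. Reindexing $j$ by a constant takes care of the first gap, but it leaves the differences $j_i-j_{i-1}$ unchanged. So configurations with consecutive blocks are never reached, for instance $j_i=i$, which is exactly what \Cref{lem:bounded-separated-coefficient} uses.

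The repair is to choose $h\geq w_1$, then $\beta$, and only at the end $\Delta$. Take $\Delta$ to be a multiple of $N$ satisfying \eqref{eq:block-spacing} and also $\Delta\geq\max_i(w_i-w_{i-1})$.

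\textbf{How the paper avoids the issue.} The paper sidesteps both halves of the obstacle by referring all patterns to a common origin. Let $\xi_*$ be the first nonzero position after the first long gap. The paper takes $s=\xi_\rho-\xi_*+1$ and lets $\alpha^{(i)}$ record the digits of the $i$-th segment at their positions relative to $\xi_*$, with zeros elsewhere. Then every window starts at $\xi_*$, so a single $\beta\equiv\xi_*-h\pmod N$ fixes the residues, with no padding or preliminary pumping. Since $\Delta>s-1$ is chosen after $s$, the insertions $h+\beta+j_1\Delta-\xi_*$, $(j_2-j_1)\Delta,\dots,(j_r-j_{r-1})\Delta$ are automatically nonnegative multiples of $N$.

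\textbf{A reversed inequality in Step 2.} Inserting zeros decreases $\norm{\uld}$, so $q'\geq q_0$, not $q'\leq q_0$. The membership $q'\in U$ follows instead from $R_*$ being an integer: $q_0\leq R_*$ forces $m_0\leq aR_*$, hence $q'\leq m_0/a\leq R_*$.
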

\begin{proof}
For $q=(m-\normd)/a\in V$, we have $(m-1)/a<q\leq m/a$. As a result, $q\leq R_*$ if and only if $m\leq aR_*$. Hence the restriction of the given QTR function to $(-\infty,R_*]$ is again QTR with the same data, with support $U$; in particular, inserting multiples of $N$ zeros into long gaps, which leaves $m$ unchanged, maps points of $U$ to points of $U$. By \Cref{lem:long-gap-criterion} applied to $U$ and to $V$, some point $q_0=(m_0-\normd)/a$ of $U$ has at least $r$ long gaps, and no point of $V$ has more than $r$; thus $q_0$ has exactly $r$ long gaps. Let $\xi_1<\cdots<\xi_\rho$ be the nonzero positions of $\uld$, let $\kappa_1<\cdots<\kappa_r$ be the indices such that the gap preceding $\xi_{\kappa_i}$ is long, and put $\kappa_{r+1}=\rho+1$ and $\xi_*=\xi_{\kappa_1}$. We set
$$h\coloneqq\max\{cM,\xi_\rho\},\  s\coloneqq\xi_\rho-\xi_*+1,$$
and for $1\leq i\leq r$ we let $\alpha^{(i)}$ be the digit pattern of length $s$ whose entry in the coordinate $\xi_k-\xi_*+1$ is $d_{\xi_k}$ for $\kappa_i\leq k<\kappa_{i+1}$, and whose other entries are zero. In other words, $\alpha^{(i)}$ records the digits of $\uld$ lying between the $i$-th and the $(i+1)$-th long gap, at their original positions relative to $\xi_*$; in particular each $\alpha^{(i)}$ is nonzero. Finally, let $\Delta$ be any positive multiple of $N$ satisfying the inequality \eqref{eq:block-spacing}, and let $\beta\in\{1,\cdots,N\}$ be congruent to $\xi_*-h$ modulo $N$.

Now let integers $1\leq j_1<\cdots<j_r$ be given. Insert
$$h+\beta+j_1\Delta-\xi_*,\ (j_2-j_1)\Delta,\ \cdots,\ (j_r-j_{r-1})\Delta$$
zeros into the first, second, ..., $r$-th long gap of $\uld$, respectively. These numbers are nonnegative multiples of $N$ by the choice of $\beta$ and $\Delta$; hence the resulting digit vector $\uld'$ satisfies $q'\coloneqq(m_0-\norm{\uld'})/a\in U$. Let us locate the digits of $\uld'$. The digits $d_{\xi_k}$ with $k<\kappa_1$ are not moved and stay in positions at most $\xi_\rho\leq h$. For $\kappa_i\leq k<\kappa_{i+1}$, the digit $d_{\xi_k}$ is moved by the first $i$ insertions, i.e. by $h+\beta+j_i\Delta-\xi_*$ positions, to the position $h+\beta+j_i\Delta+(\xi_k-\xi_*)$. Note that deleting the first $h$ digits of $\uld'$ yields the digit vector $T$-representing $q'$, and the resulting digit vector is precisely the digit vector with $\alpha^{(i)}$ placed on $B_{j_i}$ for all $i$ and zeros elsewhere.
\end{proof}

The above discussion has direct consequences for our $\bfQ_p$-algebraic $p$-adic Hahn series $f$:
\begin{enumerate}
\item By \Cref{lem:tail-clusters}, the upper bound $\opn{ot}(\opn{Supp}(f_{\leq B}))<\omega^{\bfr(f)+1}$ implies that the support of every $\uld$ with $A_{\uld}\neq0$ form at most $\bfr(f)$ groups with diameter bounded by $cM$. 
\item By \Cref{lem:separated-placements}, the lower bound $\opn{ot}(\opn{Supp}(f_{\leq\bfB(f)}))\geq\omega^{\bfr(f)}$ provides a digit vector with exactly $\bfr(f)$ such groups, of valuation less than $\bfB(f)+1$ by the inequality \eqref{eq:small-truncation-coefficient}, whose groups can moreover be moved freely along a fixed grid of blocks. These blocks will be so far apart that neither lower powers of $g$ nor carries can interfere (cf. \Cref{lem:top-block-carry-free}), which is exactly the $(g,R,T,n)$-rigid condition.\end{enumerate}

\subsection{From separated blocks to interleavings}
\label{sec:block-coefficients}

We now combine the algebraic device of \Cref{sec:coefficient-reduction} with the combinatorics of \Cref{sec:digit-blocks}. Since all the constructions of this subsection and the next take place in one and the same situation, namely the one arising in the proof of \Cref{prop:uniform-polynomial-bound}, we state it once and for all.

\begin{assumption}\label{ass:standing}
The integers $n,r\geq1$ and $0\leq R_*\leq R$ are given, and $g\in\bfL_p$ is an element whose coefficient function $\opn{Coeff}(g,\cdot)$ is QTR with data $(a,b,c,M,N)$ and whose support $V\coloneqq\opn{Supp}(g)$ satisfies
$V\subseteq[0,R]$, $\opn{ot}(V)<\omega^{r+1}$, and $\opn{ot}(V\cap[0,R_*])\geq\omega^r$.
The integers $h,s,\beta,\Delta$, the nonzero digit patterns $\alpha^{(1)},\cdots,\alpha^{(r)}$ of length $s$ and the blocks $B_j=[\beta+j\Delta,\beta+j\Delta+s-1]_{\bfZ}$ ($j\geq1$) are those provided by \Cref{lem:separated-placements} applied to $V$, $R_*$ and $n$, and $T=ap^h$.
\end{assumption}

Under \Cref{ass:standing}, we have $V\subseteq S_{a,b,c}\subseteq a^{-1}\bfZ[1/p]\subseteq T^{-1}\bfZ[1/p]$, so the family $(A_{\uld})_{\uld\in\bbP}$ in the sum \eqref{eq:aggregate-valuation} is defined. Two facts will be used repeatedly. First, by the inequality \eqref{eq:block-spacing}, the blocks $B_j$ are pairwise disjoint and we have
\begin{equation}\label{eq:block-gap}
 \min B_{j+1}-\max B_j=\Delta-(s-1)>n+2cM+M,\ j=1,2,\cdots.
\end{equation}
Second, if $A_{\uld}\neq0$, then there exists a point of $V$ that is $T$-represented by $\uld$. So by \Cref{lem:tail-clusters} the support of $\uld$ is the union of at most $r$ nonempty sets of diameter at most $cM$.

\begin{lemma}\label{lem:top-block-carry-free}
Under \Cref{ass:standing}, let $\ulu\in\bbP$ be a digit vector whose support is contained in the union of exactly $nr$ of the blocks $B_j$ and meets each of them. Then $\ulu$ satisfies the $(g,R,T,n)$-rigid condition (cf. \Cref{def:grtn}). Moreover, if $\uld_1,\cdots,\uld_n\in\bbP$ satisfy $A_{\uld_i}\neq0$ for all $i$ and $\uld_1+\cdots+\uld_n=\ulu$, then each $\uld_i$ is supported on exactly $r$ of these $nr$ blocks, meeting each of them, and different $\uld_i$ occupy different blocks.
\end{lemma}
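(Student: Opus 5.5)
The plan is to analyse an arbitrary tuple $\uld_1,\cdots,\uld_k$ ($0\leq k\leq n$) with $A_{\uld_i}\neq0$ and $\tau(\uld_1+\cdots+\uld_k)=\ulu$, using only two inputs: the cluster structure from \Cref{lem:tail-clusters}, and the block separation \eqref{eq:block-gap}. The case $k=0$ is immediate, since $\tau(0)=0\neq\ulu$ (the support of $\ulu$ meets $nr\geq1$ blocks). For $k\geq1$, each $\uld_i$ has support a union of at most $r$ nonempty clusters of diameter at most $cM$. So the union $D$ of the supports is covered by at most $kr\leq nr$ clusters.

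\emph{Step 1: carries travel only a short distance to the left.} Normalizing $\uld_1+\cdots+\uld_k$ replaces the coordinatewise sum, whose entries are $<kp\leq np$, by its base-$p$ normal form. A carry moves from position $i$ to position $i-1$, and an incoming carry is always at most $n-1$. Through a run of zero coordinates such a carry shrinks by a factor $p$ at each step, so it dies within $\log_p n<n$ positions. Group the clusters into \emph{components}: maximal chains in which consecutive clusters are at distance $\leq n$. Then the normalization acts independently on each component. The output of a component $K$ lives in the window $[\min K-n,\max K]$, and nothing is carried into the integer part, because $\min B_1=\beta+\Delta>n$.

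The output of $K$ is also nonzero. Its digit value equals the (positive) value $\sum_{i\in K}(\text{coordinate sum})_ip^{-i}$, which is strictly less than $np^{-\min K+1}\leq p^{-(\min K-n-1)}$. Hence this value is not an integer multiple of $p^{-(\min K-n-1)}$, and must leave a nonzero digit inside the window.

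\emph{Step 2: a counting squeeze.} Since $\tau(\uld_1+\cdots+\uld_k)=\ulu$ is supported in the $nr$ chosen blocks and meets each of them, every component's output must land inside the blocks, and every block must receive output from some component. Two distinct blocks are separated by more than $n+2cM+M$ by \eqref{eq:block-gap}, while a cluster together with its carry window has diameter at most $cM+n$. It follows that a component whose output meets $t$ blocks contains at least $t$ clusters. Summing over components,
\[
nr\leq\#\{\text{blocks met}\}\leq\#\{\text{clusters}\}\leq kr\leq nr .
\]
All these inequalities are therefore equalities. This gives $k=n$; each $\uld_i$ has exactly $r$ clusters; and each cluster is the unique cluster near its block, with distinct clusters attached to distinct blocks.

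\emph{Step 3: no carries, and conclusion.} Distinct clusters lie near distinct, widely separated blocks, so the supports of $\uld_1,\cdots,\uld_n$ are pairwise disjoint. Hence every coordinate of $\uld_1+\cdots+\uld_n$ is a single digit $<p$. The sum then already lies in $\bbP$, and $\tau$ does nothing: $\uld_1+\cdots+\uld_n=\ulu$. This proves the rigid condition. For the second assertion, apply the same argument with $k=n$ directly. Since the sum equals $\ulu$, each cluster lies inside its block. So each $\uld_i$ is supported on exactly $r$ of the $nr$ blocks, meets each of them, and different $\uld_i$ use different blocks.

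The main obstacle is Step 1: making the carry-locality and the nonvanishing of each component's output precise when $n\geq p$, where a column sum may be a multiple of $p$. The first thing I would try is the digit-value argument sketched there, bounding the component's value strictly between $0$ and the next power $p^{-(\min K-n-1)}$. The counting in Step 2 is then routine bookkeeping with \eqref{eq:block-gap}.
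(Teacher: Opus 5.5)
Your proof is correct and is essentially the paper's argument. The carries are bounded by $k-1\leq n-1<p^n$, so every nonzero digit of $\ulu$ lies within distance $n$ to the left of a nonzero coordinate of $\uld_1+\cdots+\uld_k$; the paper phrases this as: each set $[\min B,\max B+n]_{\bfZ}$ attached to an occupied block meets the support of the sum. The squeeze $nr\leq\#\{\text{pieces}\}\leq kr\leq nr$ against \eqref{eq:block-gap} then gives $k=n$ and pairwise disjoint pieces, hence no carries at all.

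The only blemish is in your Step 1. The decomposition into components and the nonvanishing of each component's output are never used in the squeeze; equality there already forces every cluster to be attached to a block. Also, your justification that nothing is carried into the integer part is not available at that stage, because the clusters of the $\uld_i$ are not yet known to lie near the blocks. Once you drop that claim, the obstacle you anticipated for $n\geq p$ disappears.
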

\begin{proof}
Let $0\leq k\leq n$, and let $\uld_1,\cdots,\uld_k\in\bbP$ satisfy $A_{\uld_i}\neq0$ for all $i$ and $\tau(\uld_1+\cdots+\uld_k)=\ulu$. We have to show that $k=n$ and $\uld_1+\cdots+\uld_k=\ulu$; the assertion on the supports will be obtained along the way. Since $\ulu\neq0$, we have $k\geq1$. Put $\uly\coloneqq\uld_1+\cdots+\uld_k\in\bigoplus_{\bfZ_{\geq1}}\bfN$, with coordinates $y_i$, and choose an integer $K$ such that $y_i=0$ for $i>K$. The digits of $\tau(\uly)=\ulu$ are obtained from those of $\uly$ by carrying: setting $e_i\coloneqq0$ for $i>K$ and, for $i=K,K-1,\cdots,1$,
$$e_i\coloneqq\left\lfloor\frac{y_i+e_{i+1}}{p}\right\rfloor,\  u_i\coloneqq y_i+e_{i+1}-pe_i,$$
one has $0\leq u_i<p$ and $\sum_{i=1}^Ku_ip^{-i}=\norm{\uly}-e_1$, so that $(u_1,\cdots,u_K,0,\cdots)\in\bbP$ has digit value congruent to $\norm{\uly}$ modulo $\bfZ$ and therefore equals $\tau(\uly)=\ulu$ by \Cref{lem:20585}. Note that the two displayed formulas remain valid for $i>K$ if we put $u_i\coloneqq0$ there, so we may use them for all $i\geq1$. Since $y_i\leq k(p-1)$ for all $i$, a descending induction on $i$ shows that $0\leq e_i\leq k-1$ for all $i$.

For any integer $j\geq 1$, let $B^+_j\coloneqq[\min B_j,\max B_j+n]_{\bfZ}$ and $B_j^{++}\coloneqq [\min B_j-cM,\max B_j+n+cM]_{\bfZ}$ associated to the block $B_j$. By the inequality \eqref{eq:block-gap}, $B_j^+$ (resp. $B_j^{++}$) are pairwise disjoint. Let $B$ be one of the $nr$ blocks occupied by $\ulu$.
We claim that $y_i\neq0$ for some $i\in B^+$. Suppose not, and let $i_0$ be the largest element of $B$ with $u_{i_0}\neq0$. Then $y_i=0$ for $i_0\leq i\leq i_0+n$, and also $u_i=0$ for $i_0<i\leq i_0+n$: for $i\leq\max B$ by the choice of $i_0$, and for $\max B<i\leq i_0+n\leq\max B+n$ because such $i$ lie strictly between $B$ and the next block by the inequality \eqref{eq:block-gap}, whereas $\ulu$ is supported on blocks. The recursion at $i=i_0+1,\cdots,i_0+n$ therefore reads $e_{i+1}=pe_i$, whence $e_{i_0+n+1}=p^ne_{i_0+1}$. As $e_{i_0+n+1}\leq k-1\leq n-1<p^n$, this forces $e_{i_0+1}=0$, and then the recursion at $i=i_0$ gives $u_{i_0}=-pe_{i_0}\leq0$, a contradiction.

Now, by the second fact following \Cref{ass:standing}, the support of each $\uld_i$ is the union of at most $r$ nonempty sets of diameter at most $cM$; we choose such a decomposition for each $i$ and call the resulting sets, at most $kr$ in number, the \textbf{pieces} with respect to $\uld_1,\cdots,\uld_k$. By the inequality \eqref{eq:block-gap}, the sets $\{B_j^+\}_{1\leq j\leq nr}$ attached to the $nr$ occupied blocks by $\ulu$ are pairwise at distance greater than $2cM$, so a piece meets at most one of them, and if a piece meets $B^+$, then it is contained in $B^{++}$. Since each of the $nr$ sets $B^+$ meets some piece, there must be exactly $nr$ pieces, each meeting exactly one set $B^+$ and contained in $B^{++}$; hence $k=n$, each $\uld_i$ has exactly $r$ pieces, and the pieces are in bijection with the occupied blocks. In particular, since $B_j^{++}$ are pairwise disjoint, pieces are pairwise disjoint. Consequently, at every position at most one of $\uld_1,\cdots,\uld_n$ has a nonzero digit, so that $0\leq y_i<p$ for all $i$; thus $\uly\in\bbP$ and $\uly=\tau(\uly)=\ulu$. This proves that $\uld$ satisfies the $(g,R,T,n)$-rigid condition.

Finally, let $\uld_1,\cdots,\uld_n$ be as in the last assertion of the lemma. Then $\tau(\uld_1+\cdots+\uld_n)=\tau(\ulu)=\ulu$, so everything above applies with $k=n$. Since all digits are nonnegative, $\uld_1+\cdots+\uld_n=\ulu$ implies that each $\uld_i$ is supported in the union of the occupied blocks. The piece attached to an occupied block $B$ is contained in $[\min B-cM,\max B+n+cM]_{\bfZ}$, which by the inequality \eqref{eq:block-gap} meets no occupied block other than $B$; hence this piece is contained in $B$. Therefore each $\uld_i$, whose $r$ pieces are attached to $r$ distinct blocks, is supported on exactly these $r$ blocks and meets each of them, and the blocks attached to different $\uld_i$ are different.

\end{proof}

\begin{lemma}\label{lem:block-position-invariance}
Under \Cref{ass:standing}, let $\gamma^{(1)},\cdots,\gamma^{(r)}$ be nonzero digit patterns of length $s$. For integers $1\leq j_1<\cdots<j_r$, let $\uld$ be the digit vector obtained by placing the digit patterns $\{\gamma^{(i)}\}$ on $\{B_{j_i}\}$ for $1\leq i\leq r$ and zeros elsewhere. Then $A_{\uld}$ does not depend on the choice of $j_1,\cdots,j_r$.
\end{lemma}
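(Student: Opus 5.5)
The plan is to write $A_{\uld}$ out explicitly in terms of values of the QTR function $x\coloneqq\opn{Coeff}(g,\cdot)$ at points whose $a$-representing digit vectors are built from $\uld$. Then moving the blocks becomes a sequence of insertions and deletions of runs of $N$ zeros inside long zero runs. Condition (2) of the QTR definition says that such insertions, and hence also the inverse deletions, do not change the value of $x$.

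\textbf{Step 1 (unfolding $A_{\uld}$).} Let $q=(l-\normd)/T$ with $T=ap^h$ and $0\leq l\leq TR$. Put $m\coloneqq\lceil aq\rceil$ and $k\coloneqq mp^h-l$. From $aq=(l-\normd)/p^h$ and $0\leq\normd<1$ one checks that $0\leq k<p^h$. Let $\ulw=(w_1,\cdots,w_h)$ be the base-$p$ digits of $k$, so that $k=\sum_{i=1}^h w_ip^{h-i}$. Then the digit vector $a$-representing $q$ is the concatenation $\ulw\cdot\uld$, meaning $\ulw$ in positions $1,\cdots,h$ followed by $\uld$ shifted by $h$. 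This is the converse of the computation in the proof of \Cref{lem:tail-clusters}. The correspondence $l\leftrightarrow(m,\ulw)$ is a bijection, and it depends only on $h$, not on $\uld$. Hence
$$A_{\uld}=\sum_{(m,\ulw)}\left[x\left(\tfrac{1}{a}\bigl(m-\norm{\ulw\cdot\uld}\bigr)\right)\right]p^{(p^hm-k(\ulw))/T},$$
where the index set of pairs $(m,\ulw)$ (those with $0\leq p^hm-k(\ulw)\leq TR$) is independent of $\uld$. It therefore suffices to show the following: for two placements $\uld,\uld'$ of the same patterns $\gamma^{(i)}$ on different blocks, and for every $m$ and $\ulw$,
$$x\left(\tfrac{1}{a}(m-\norm{\ulw\cdot\uld})\right)=x\left(\tfrac{1}{a}(m-\norm{\ulw\cdot\uld'})\right).$$

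\textbf{Step 2 (trivial cases).} If $m<-b$, or if $\Psi(\ulw\cdot\uld)>c$, then both sides vanish, because the support of $x$ lies in $S_{a,b,c}$. Here we use that $\Psi$ is the same for both placements. So from now on we may assume $m\geq-b$ and $\Psi\leq c$, which is exactly the range where condition (2) of the QTR definition applies.

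\textbf{Step 3 (elementary moves).} Every increasing tuple $(j_1,\cdots,j_r)$ can be joined to $(1,\cdots,r)$ by moves of the form $j_i\mapsto j_i-1$ that keep the tuple strictly increasing. By transitivity, it is enough to treat one such move. In $\ulw\cdot\uld$, the zero run immediately before block $B_{j_i}$ has length at least $\min B_{j_i}-\max B_{j_{i-1}}-1$. For $i=1$ this run starts after position $h$ and has length at least $\beta+j_1\Delta-1$. Since $j_i-1>j_{i-1}$, this length is at least $2\Delta-s$. Now delete $\Delta$ zeros from this run, in $\Delta/N$ steps of $N$ zeros each. After the deletion the run still has length at least $\Delta-s\geq M$ by \eqref{eq:block-spacing}. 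Each single deletion is the inverse of an admissible insertion of $N$ zeros into a run of at least $M$ zeros, so by condition (2) it preserves the value of $x$. This deletion moves block $i$ and also all later blocks one step to the left. If $i<r$, we then insert $\Delta$ zeros into the run before block $i+1$; that run has length at least $\Delta-(s-1)>M$. This restores blocks $i+1,\cdots,r$ to their original positions. The leading prefix $\ulw$ and the value of $m$ are unchanged throughout.

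The main point of care is Step 1: one must verify that the $h$ prefix digits and the integer $m$ are determined by $l$ alone, independently of the placement. One must also check that the first long run begins only after position $h$, so that the moves never touch the prefix $\ulw$.
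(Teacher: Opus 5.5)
Your proposal is correct and follows essentially the paper's route. You unfold $A_{\uld}$ through the bijection $l\leftrightarrow(m,\text{prefix})$, with $m=\lceil l/p^h\rceil$ independent of the placement; you discard the case $\Psi>c$ using $V\subseteq S_{a,b,c}$; and you then apply the QTR zero-insertion identity inside the long zero runs between the shifted blocks.

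The only difference is cosmetic. You move one block at a time, treating deletions as inverse insertions, whereas the paper compares any placement directly with $j_i=i$ using insertions only. There is also a harmless off-by-one slip: the zero run strictly between consecutive blocks has length $\Delta-s$, not $\Delta-(s-1)$. This is still at least $M$ by \eqref{eq:block-spacing}, so the argument is unaffected.
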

\begin{proof}
It suffices to compare an arbitrary choice $1\leq j_1<\cdots<j_r$ with the choice $j_i=i$ for all $i$. Let $\uld^*$ correspond to the latter and $\uld'$ to the former. Then $\uld'$ is obtained from $\uld^*$ by inserting $\Delta(j_1-1)$ zeros immediately before the block $B_1$, and then $\Delta(j_i-j_{i-1}-1)$ zeros immediately before the block carrying $\gamma^{(i)}$, for $i=2,\cdots,r$ successively. Note that all these numbers are nonnegative multiples of $N$.

For any integer $l$ and $\uld\in\bbP$, let
$$A_{\uld,l}\coloneqq \left[\opn{Coeff}\left(g,\frac{l-\norm{\uld}}{T}\right)\right]p^{l/T}.$$
Then $A_{\uld}=\sum_{l=0}^{TR} A_{\uld,l}$, and we only need to prove that $A_{\uld^*,l}=A_{\uld',l}$ for every $l=0,\cdots,TR$. Fix such $l$ and put $m\coloneqq\lceil l/p^h\rceil\geq0$, so that $0\leq(p^hm-l)/p^h<1$. Let $\ule^*\in\bbP$ be the digit vector whose first $h$ digits are those of the digit vector $1$-representing $(l-p^hm)/p^h$, and whose digit in the position $h+i$ is $d_i^*$ for $i\geq1$. We define $\ule'$ in the same way from $\uld'$. Then one can calculate that
\begin{equation}\label{eq:5468}
\frac{l-\norm{\uld^*}}{T}=\frac{m-\norm{\ule^*}}{a} \text{ and } \frac{l-\norm{\uld'}}{T}=\frac{m-\norm{\ule'}}{a}.
\end{equation}

Note that $\Psi(\ule^*)=\Psi(\ule')$. %
If $\Psi(\ule^*)>c$, since $\ule^*$ (resp. $\ule'$) is the unique digit vector that $a$-represents $(m-\norm{\ule^*})/a$ (resp. $(m-\norm{\ule'})/a$), we know that $(m-\norm{\ule^*})/a$ and $(m-\norm{\ule'})/a$ do not belong to $S_{a,b,c}\supseteq V$. The equalities \eqref{eq:5468} implies that $A_{\uld^*,l}=A_{\uld',l}=0$.

Now we suppose $\Psi(\ule^*)\leq c$. The consecutive $\beta+\Delta-1$ digits of $\ule^*$ preceding the shifted block $B_1+h$ are zeros, and so do the $\Delta-s$ digits between two consecutive shifted blocks. Note that both $\beta+\Delta-1$ and $\Delta-s$ are at least $M$ by inequality \eqref{eq:block-spacing}, and $\ule'$ is obtained from $\ule^*$ by the same insertions of zeros as $\uld'$ from $\uld^*$, shifted by h positions. The element $\ule'$ is thus obtained from $\ule^*$ by successively inserting $N$ zeros, each time at a position immediately preceded by at least $M$ consecutive zeros. As a result, the QTR identity implies $\opn{Coeff}(g,(m-\norm{\ule^*})/a)=\opn{Coeff}(g,(m-\norm{\ule'})/a)$. The result follows from the equalities \eqref{eq:5468} again.
\end{proof}

\begin{definition}\label{def:words}
Let $\calL$ be a finite nonempty set, called an \textbf{alphabet}. For an integer $d\geq0$, a \textbf{word} of length $d$ over $\calL$ is a tuple $w=(w_1,\cdots,w_d)\in\calL^d$, whose entries are called its \textbf{letters}. For $I=\{i_1<\cdots<i_e\}\subseteq\{1,\cdots,d\}$, we write $w|_I\coloneqq(w_{i_1},\cdots,w_{i_e})$, a word of length $e$ (the empty word if $I=\varnothing$). If $z$ is a word of length $nr$ and $(I_1,\cdots,I_n)$ is an ordered partition of $\{1,\cdots,nr\}$ into $n$ sets of cardinality $r$, we say that $z$ is an \textbf{interleaving} of the words $z|_{I_1},\cdots,z|_{I_n}$, and that the ordered partition $(I_1,\cdots,I_n)$ realizes it as such.
\end{definition}

By \Cref{lem:block-position-invariance}, the coefficient $A_{\uld}$ of a digit vector built from $r$ patterns depends only on the sequence of patterns, that is, on a word in the patterns; and \Cref{lem:top-block-carry-free} shows that the products contributing to $\calS_n(\ulu)$ for $\ulu$ on $nr$ blocks correspond to ways of splitting a word of length $nr$ into $n$ words of length $r$. 

We apply this to the alphabet $\calL$ of all nonzero digit patterns of length $s$. For a word $w=(w_1,\cdots,w_r)\in\calL^r$, we denote by $A_w$ the common value, by \Cref{lem:block-position-invariance}, of $A_{\uld}$ for the digit vectors $\uld$ with $w_i$ placed on $B_{j_i}$ for $1\leq i\leq r$ and zeros elsewhere, where $1\leq j_1<\cdots<j_r$ are arbitrary. For a word $z=(z_1,\cdots,z_{nr})\in\calL^{nr}$, we denote by $\ulu_z\in\bbP$ the digit vector with $z_j$ placed on $B_j$ for $1\leq j\leq nr$ and zeros elsewhere; its support is contained in $B_1\cup\cdots\cup B_{nr}$ and meets each of these blocks, so that \Cref{lem:top-block-carry-free} applies to $\ulu_z$.

\begin{lemma}\label{lem:block-interleaving-identity}
Under \Cref{ass:standing}, for every $z\in\calL^{nr}$ one has
\begin{equation}\label{eq:block-interleaving-identity}
 \calS_n(\ulu_z)=\sum_{(I_1,\cdots,I_n)}\prod_{i=1}^nA_{z|_{I_i}},
\end{equation}
where the sum runs over all ordered partitions $(I_1,\cdots,I_n)$ of $\{1,\cdots,nr\}$ into $n$ sets of cardinality $r$.
\end{lemma}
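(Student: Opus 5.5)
We prove the identity by setting up a bijection between the nonzero terms on the two sides. Start from the definition \eqref{eq:carry-free-coefficient}: $\calS_n(\ulu_z)$ is the sum of $A_{\uld_1}\cdots A_{\uld_n}$ over $n$-tuples $(\uld_1,\cdots,\uld_n)\in\bbP^n$ with $\uld_1+\cdots+\uld_n=\ulu_z$. Tuples in which some $A_{\uld_i}=0$ contribute nothing, so we may restrict to tuples with all $A_{\uld_i}\neq0$.

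For such a tuple, the last assertion of \Cref{lem:top-block-carry-free} (applicable since $\ulu_z$ is supported on exactly the $nr$ blocks $B_1,\cdots,B_{nr}$ and meets each) gives the following. Each $\uld_i$ is supported on exactly $r$ of these blocks and meets each of them. Different $\uld_i$ occupy different blocks. Hence the sets $I_i\coloneqq\{j\colon \opn{supp}(\uld_i)\cap B_j\neq\varnothing\}$ form an ordered partition $(I_1,\cdots,I_n)$ of $\{1,\cdots,nr\}$ into sets of cardinality $r$.

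Since the $\uld_i$ have disjoint supports and sum to $\ulu_z$, the restriction of $\uld_i$ to $B_j$ for $j\in I_i$ is exactly $z_j$ placed on $B_j$, and $\uld_i$ vanishes elsewhere. Writing $I_i=\{j_1<\cdots<j_r\}$, the vector $\uld_i$ is therefore the placement of $z|_{I_i}$ on $B_{j_1},\cdots,B_{j_r}$. The letters of $z$ are nonzero patterns, so this is a word in $\calL^r$, and by the definition of $A_w$ via \Cref{lem:block-position-invariance} we get $A_{\uld_i}=A_{z|_{I_i}}$. Thus each surviving tuple yields the term $\prod_i A_{z|_{I_i}}$ indexed by its partition, and distinct tuples yield distinct partitions.

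Conversely, given any ordered partition $(I_1,\cdots,I_n)$, define $\uld_i$ as the placement of $z|_{I_i}$ on the blocks indexed by $I_i$. These lie in $\bbP$, have disjoint supports, and sum coordinatewise to $\ulu_z$ with no carries, so $(\uld_1,\cdots,\uld_n)$ is an admissible tuple in \eqref{eq:carry-free-coefficient}. Its product is $\prod_i A_{z|_{I_i}}$.

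If this product is nonzero, the tuple is among those analysed above, and its associated partition is $(I_1,\cdots,I_n)$; so the two maps are mutually inverse on the nonzero terms. If the product is zero, the partition contributes zero to the right-hand side and the corresponding tuple contributes zero to $\calS_n(\ulu_z)$. Summing gives \eqref{eq:block-interleaving-identity}.

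The only delicate point is this last bookkeeping. The right-hand side ranges over all partitions, while \Cref{lem:top-block-carry-free} only controls tuples with nonzero $A$'s. The mismatch is harmless because every partition with a nonzero product comes from a tuple that the lemma does control, and every other partition contributes zero on both sides.
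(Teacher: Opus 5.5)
Your proposal is correct and follows essentially the same argument as the paper. You apply the last assertion of \Cref{lem:top-block-carry-free} to tuples with all $A_{\uld_i}\neq0$, read off the ordered partition, and identify $A_{\uld_i}=A_{z|_{I_i}}$; conversely, you place the letters of $z|_{I_i}$ on the blocks indexed by $I_i$. Your explicit check that $\uld_i$ is exactly the placement of $z|_{I_i}$ (via disjoint supports), and your treatment of partitions whose product vanishes, only spell out bookkeeping the paper leaves implicit.
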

\begin{proof}
Let $(\uld_1,\cdots,\uld_n)$ be a tuple with $\uld_1+\cdots+\uld_n=\ulu_z$ and $A_{\uld_i}\neq0$ for all $i$. Such a tuple corresponds to a nonzero term in the summation in equality \eqref{eq:carry-free-coefficient}, i.e., the definition of $\calS_n(\ulu_z)$. By \Cref{lem:top-block-carry-free}, each $\uld_i$ is supported on exactly $r$ of the blocks $B_1,\cdots,B_{nr}$, meeting each of them, and different $\uld_i$ occupy different blocks. Since every block $B_j$ carries a nonzero digit of $\ulu_z=\uld_1+\cdots+\uld_n$, the sets $I_i\coloneqq\{j\colon\opn{supp}(\uld_i)\cap B_j\neq\varnothing\}$ form an ordered partition $(I_1,\cdots,I_n)$ of $\{1,\cdots,nr\}$ into sets of cardinality $r$. Thus $(I_1,\cdots,I_n)$ realizes $z$ as an interleaving of the words $z|_{I_1},\cdots,z|_{I_n}$, where $z|_{I_i}$ records the patterns
of $\uld_i$ on its occupied blocks in increasing order.
Hence $A_{\uld_i}=A_{z|_{I_i}}$. Conversely, for every ordered partition $(I_1,\cdots, I_n)$ as in the statement and any $i=1,\cdots,n$, placing the letters of $z|_{I_i}$ on the blocks $B_j$ for every $j\in I_i$ defines digit vectors $\uld_i$, and one has $\uld_1+\cdots+\uld_n=\ulu_z$. Since distinct partitions $(I_1,\cdots, I_n)$ give distinct tuples $(\uld_1,\cdots,\uld_n)$, the two sides of the equality \eqref{eq:block-interleaving-identity} therefore have the same nonzero terms.
\end{proof}

\subsection{A uniformly bounded nonzero coefficient}
\label{sec:bounded-coefficient}

By \Cref{lem:block-interleaving-identity}, the coefficient $\calS_n(\ulu_z)$ is a sum, indexed by interleavings, of products of the quantities $A_w$, $w\in\calL^r$. The following lemma, a statement of pure algebra, shows that cancellation in the summation $\calS_n(\ulu_z)$ cannot occur for all $z$ simultaneously, and that it costs, in terms of valuation, at most a constant depending on $n$ and $r$ but not on the alphabet.

\begin{lemma}\label{lem:shuffle-power-coefficient}
Let $n,r\geq1$ be integers. Let $\calL$ be a finite nonempty alphabet and let $(\lambda_w)_{w\in\calL^r}$ be a family of elements of $\breve{\bfZ}_{p,T}$, not all zero. For $z\in\calL^{nr}$ put
$$\sigma_z\coloneqq\sum_{(I_1,\cdots,I_n)}\prod_{i=1}^n\lambda_{z|_{I_i}},$$
where the sum runs over all ordered partitions of $\{1,\cdots,nr\}$ into $n$ sets of cardinality $r$.  There exists an integer $C(n,r)\geq0$, depending only on $n$, $r$ and $p$, and a word $z\in\calL^{nr}$ of length $nr$, such that
$$\sigma_z\neq0\ \text{and}\  v_p(\sigma_z)\leq n\cdot\min\{v_p(\lambda_w)\colon w\in\calL^r,\ \lambda_w\neq0\}+C(n,r).$$
\end{lemma}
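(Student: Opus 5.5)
The plan is to read $\sigma_z$ as a coefficient in a shuffle algebra and then prove a Gauss-lemma-type statement with a bounded defect. Let $\calO=\breve{\bfZ}_{p,T}$ and let $\mathrm{Sh}$ be the free $\calO$-module on the words over $\calL$, with the shuffle product $\sqcup\!\sqcup$. For $x=\sum_w x_w w$ put $v_p(x)\coloneqq\min_w v_p(x_w)$. Set $\lambda\coloneqq\sum_{w\in\calL^r}\lambda_w w$. Expanding $\lambda^{\sqcup\!\sqcup n}$, a product of $n$ words of length $r$ contributes one term for each ordered partition $(I_1,\cdots,I_n)$ of $\{1,\cdots,nr\}$ into $r$-sets. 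Hence $\sigma_z$ is exactly the coefficient of $z$ in $\lambda^{\sqcup\!\sqcup n}$. The statement to prove is therefore
$$v_p\bigl(\lambda^{\sqcup\!\sqcup n}\bigr)\leq n\,v_p(\lambda)+C(n,r),$$
with $C$ independent of $\calL$ and of $T$. Any $z$ achieving this minimum has $\sigma_z\neq0$. We may divide by a coefficient of minimal valuation and assume $v_p(\lambda)=0$.

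The first step is the ``unit'' case. Totally order $\calL$ and order words lexicographically. Choose $u$ lexicographically largest among the words with $v_p(\lambda_u)=0$, and let $z_0$ be the greedy maximal merge of $n$ copies of $u$. The coefficient of $z_0$ in $u^{\sqcup\!\sqcup n}$ is a positive integer at most $(nr)!$, so its valuation is at most $v_p((nr)!)$. Next I check that the greedy top merge is monotone: if $(w_1,\cdots,w_n)$ merges to $z_0$, then some $w_i\geq u$, and equality forces all $w_i=u$. Consequently, tuples consisting of minimal-valuation words contribute to $z_0$ only through $u^{\sqcup\!\sqcup n}$. Thus, modulo the maximal ideal, the coefficient of $z_0$ is controlled up to the factor $(nr)!$.

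The main obstacle is the interference of words of small positive valuation. Such words may be lexicographically larger than $u$ and still reach $z_0$. Because $v_p((nr)!)$ may exceed their valuation, the cancellation is not excluded by a naive leading-term argument. To handle this, I would replace the single valuation level by a stratification. Put $K\coloneqq v_p((nr)!)$ and consider the levels $S_t=\{w\colon v_p(\lambda_w)\leq t(K+1)\}$ for $t\geq0$. Let $u_t$ be the lexicographically largest element of $S_t$. I would then run the leading-term argument at level $t$, using as weights the valuations truncated at the next threshold.

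The difficulty is to choose $t$ so that the contribution of $u_t^{\sqcup\!\sqcup n}$, of valuation at most $nt(K+1)+K$, beats every tuple involving a word outside $S_t$. Because the $u_t$ can keep increasing with $t$ and the alphabet is unbounded, this is exactly where uniformity in $|\calL|$ must come from. If this bookkeeping fails, my fallback is to prove a two-factor inequality $v_p(x\sqcup\!\sqcup y)\leq v_p(x)+v_p(y)+D(\ell)$ for homogeneous $x,y$ of total length $\ell$, and then induct via $\lambda^{\sqcup\!\sqcup n}=\lambda\sqcup\!\sqcup\lambda^{\sqcup\!\sqcup(n-1)}$, obtaining $C(n,r)=\sum_{k<n}D((k+1)r)$. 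To make $D$ alphabet-free, I would reduce to at most $\ell$ letters by a letter-to-letter substitution $\calL\to\calL'$ with $|\calL'|\leq\ell$. Such a substitution is a shuffle morphism, and I would choose it injective on the letters of the two extremal words realizing $v_p(x)$ and $v_p(y)$. Then $D(\ell)$ is a maximum over finitely many alphabet sizes, and on each fixed finite alphabet it is finite. This finiteness holds because $\mathrm{Sh}\otimes\bfQ$ is an integral domain, being a polynomial algebra on Lyndon words, together with a compactness (finite lattice index) argument.
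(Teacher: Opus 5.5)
Your reformulation is correct: $\sigma_z$ is the coefficient of $z$ in $\lambda^{\sqcup\!\sqcup n}$. However, the proposal does not establish the uniform bound. By your own account the stratification line is left open, so everything rests on the fallback, and both of its reductions have problems.

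First, a letter-to-letter substitution $\phi\colon\calL\to\calL'$ with $|\calL'|\leq\ell$ that is merely injective on the letters of the two extremal words does not preserve $v_p(x)$. The coefficients of $\phi(x)$ are sums over preimages and can cancel. Take $x=a-c$, $y=b-c$, extremal words $a,b$ and $|\calL'|=2$: the letter $c$ must go to $\phi(a)$ or $\phi(b)$, which kills $x$ or $y$. The correct reduction sends every word containing a letter outside the chosen set to $0$. This is a shuffle morphism and leaves the surviving coefficients untouched. It is exactly the paper's first step, which restricts to $z\in\calL_0^{nr}$, where $\calL_0$ is the set of at most $r$ letters of one minimal-valuation word $w_0$.

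Second, and more seriously, you have not justified that the defect is finite on a fixed alphabet. That $\mathrm{Sh}\otimes\bfQ$ is a domain is a characteristic-$0$ fact. By itself it says nothing about the size of $p$-adic cancellation; modulo $p$ there are zero divisors, e.g.\ $a^{\sqcup\!\sqcup p}=p!\,(aa\cdots a)$. A compactness argument is not available. The ring $\breve{\bfZ}_{p,T}$ has residue field $\overline{\bfF}_p$, so the unit-content vectors do not form a compact set. Moreover, the constant has to be independent of $T$: in the application, $T=ap^h$, like the alphabet, changes with the truncation point $B$.

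The missing idea is to turn ``no common nonzero zero over $\overline{\bfQ}$'' into an identity over $\bfZ$ via the Nullstellensatz. The paper proves that the forms $\Sigma_z$ have only the trivial common zero. It uses the same lexicographically-largest-interleaving argument as your ``unit case'', but run over $\overline{\bfQ}$ rather than modulo the maximal ideal. It then obtains $D\,X_w^e=\sum_zQ_{w,z}\Sigma_z$ with $Q_{w,z}\in\bfZ[X]$. Evaluating at $\mu_w=\lambda_w/\lambda_{w_0}$ gives some $z$ with $v_p(\Sigma_z(\mu))\leq v_p(D)$. This works over every coefficient ring at once. It also absorbs the interference from positive-valuation words that blocked your stratification, since those words simply enter as integral coordinates of $\mu$.

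Your two-factor route could be repaired the same way. The coefficients of $X\sqcup\!\sqcup Y$ cut out $\{X=0\}\cup\{Y=0\}$, so a power of each $X_uY_v$ lies in the ideal they generate over $\bfQ$. As written, however, that step is missing.
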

\begin{proof}
Choose $w_0\in\calL^r$ with $\lambda_{w_0}\neq0$ and $v_p(\lambda_{w_0})$ minimal, and let $\calL_0\subseteq\calL$ be the set of letters occurring in $w_0$, so that $\calL_0$ has at least one and at most $r$ elements. For $z\in\calL_0^{nr}$, every word $z|_{I_i}$ lies in $\calL_0^r$, so $\sigma_z$ only involves the subfamily $(\lambda_w)_{w\in\calL_0^r}$, whose minimal valuation among nonzero members is still $v_p(\lambda_{w_0})$; we may therefore replace $\calL$ by $\calL_0$. Next, if $\calL_0$ has fewer than $r$ elements, we enlarge $\calL_0$ to a set $\calL_1$ with exactly $r$ elements and set $\lambda_w\coloneqq0$ for every $w\in\calL_1^r\setminus\calL_0^r$. Then $\sigma_z=0$ for every $z\in\calL_1^{nr}\setminus\calL_0^{nr}$, since every ordered partition assigns a letter of $\calL_1\setminus\calL_0$ to one of the words $z|_{I_i}$, and the minimal valuation is unchanged. Altogether, we may and do assume that $\calL=\{1,\cdots,r\}$. We order the words of a given length over $\calL$ lexicographically.

Let $X=(X_w)_{w\in\calL^r}$ be a family of indeterminates and, for $z\in\calL^{nr}$, let
$$\Sigma_z(X)\coloneqq\sum_{(I_1,\cdots,I_n)}\prod_{i=1}^nX_{z|_{I_i}}\in\bfZ[X],$$
a homogeneous polynomial of degree $n$ with nonnegative integer coefficients. We claim that the polynomials $\Sigma_z$, $z\in\calL^{nr}$, have no common zero in $\overline{\bfQ}^{\,\calL^r}$ other than the origin. Let indeed $x=(x_w)_{w\in\calL^r}$ be a nonzero point of $\overline{\bfQ}^{\,\calL^r}$, let $w$ be the lexicographically largest word with $x_w\neq0$, and let $z$ be the lexicographically largest interleaving of $n$ copies of $w$. Consider an ordered partition $(I_1,\cdots,I_n)$ whose term $\prod_ix_{z|_{I_i}}$ is nonzero. By the choice of $w$, we have $z|_{I_i}\leq w$ for all $i$. If $z|_{I_i}<w$ for some $i$, then replacing the letters of $z$ in the positions $I_i$ by those of $w$ produces a word $z'>z$, because the first modified position carries a larger letter; replacing successively the letters in the other positions $I_{i'}$ by those of $w$ can only increase the word further, and ends with an interleaving of $n$ copies of $w$ that is larger than $z$, a contradiction. Hence $z|_{I_i}=w$ for all $i$, and $\Sigma_z(x)$ equals $x_w^n$ times the number of ordered partitions realizing $z$ as an interleaving of $n$ copies of $w$, which is a positive integer. Thus $\Sigma_z(x)\neq0$, proving the claim.

By Hilbert's Nullstellensatz (cf. \cite[Tag 00FV]{stacks-project}), the radical of the ideal of $\bfQ[X]$ generated by the $\Sigma_z$'s is therefore the maximal ideal generated by the $X_w$'s. Hence there exists an integer $e\geq1$ such that each $X_w^e$ is a $\bfQ[X]$-linear combination of the $\Sigma_z$, and clearing denominators we find an integer $D\geq1$ and polynomials $Q_{w,z}\in\bfZ[X]$ such that
\begin{equation}\label{eq:nullstellensatz-identity}
 D\,X_w^e=\sum_{z\in\calL^{nr}}Q_{w,z}(X)\,\Sigma_z(X)\ \text{for every }w\in\calL^r.
\end{equation}
We fix such $e$, $D$ and $Q_{w,z}$ once and for all for each pair $(n,r)$, and put $C(n,r)\coloneqq v_p(D)$.

Put $\mu_w\coloneqq\lambda_w/\lambda_{w_0}$ for $w\in\calL^r$ and $\mu\coloneqq(\mu_w)_{w\in\calL^r}$. By the choice of $w_0$, every $\mu_w$ has nonnegative valuation, and $\mu_{w_0}=1$. Evaluating the equality \eqref{eq:nullstellensatz-identity} for $w=w_0$ at $\mu=(\mu_w)$ gives 
$$D=\sum_zQ_{w_0,z}(\mu)\Sigma_z(\mu).$$ 
Since $v_p(Q_{w_0,z}(\mu))\geq0$ for all $z$, the ultrametric inequality implies that there exists a $z$ such that $v_p(\Sigma_z(\mu))\leq v_p(D)=C(n,r)$; in particular $\Sigma_z(\mu)\neq0$. As the polynomial $\Sigma_z$ is homogeneous of degree $n$, we have $\sigma_z=\Sigma_z(\lambda)=\lambda_{w_0}^n\Sigma_z(\mu)$, whence $\sigma_z\neq0$ and $$v_p(\sigma_z)=n\,v_p(\lambda_{w_0})+v_p(\Sigma_z(\mu))\leq n\,v_p(\lambda_{w_0})+C(n,r).$$
\end{proof}

We can now construct the digit vector that satisfies the condition in \Cref{lem:truncation-coset}.  %

\begin{lemma}\label{lem:bounded-separated-coefficient}
Under \Cref{ass:standing}, there exists $\ulu\in\bbP$ satisfying the $(g,R,T,n)$-rigid condition such that
$$\calS_n(\ulu)\neq0\ \text{and}\  v_p(\calS_n(\ulu))\leq n(R_*+1)+C(n,r),$$
where $C(n,r)$ is the constant of \Cref{lem:shuffle-power-coefficient}.
\end{lemma}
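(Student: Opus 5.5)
The plan is to combine \Cref{lem:block-interleaving-identity} with \Cref{lem:shuffle-power-coefficient}, applied to the family $\lambda_w\coloneqq A_w$ for $w\in\calL^r$, where $\calL$ is the alphabet of all nonzero digit patterns of length $s$. This alphabet is finite and nonempty.

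First we check that this family is not identically zero, with a good valuation bound. Let $w^*\coloneqq(\alpha^{(1)},\cdots,\alpha^{(r)})\in\calL^r$ be the word of patterns furnished by \Cref{lem:separated-placements}. By that lemma, for $j_i=i$ there is a point of $U=V\cap(-\infty,R_*]$ that is $T$-represented by the digit vector with $\alpha^{(i)}$ placed on $B_i$. Since $V\subseteq[0,R]$, this point lies in $V\cap[0,R_*]$. The inequality \eqref{eq:small-truncation-coefficient} then gives $A_{w^*}\neq0$ and $v_p(A_{w^*})<R_*+1$. Consequently
$$\min\{v_p(A_w)\colon w\in\calL^r,\ A_w\neq0\}<R_*+1.$$
All $A_w$ lie in $\breve{\bfZ}_{p,T}$, because the coefficients are Teichmüller lifts and $l\geq0$. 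So the hypotheses of \Cref{lem:shuffle-power-coefficient} hold.

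\Cref{lem:shuffle-power-coefficient} then yields a word $z\in\calL^{nr}$ such that $\sigma_z\neq0$ and $v_p(\sigma_z)\leq n(R_*+1)+C(n,r)$. Set $\ulu\coloneqq\ulu_z$. Every letter of $z$ is a nonzero pattern, so the support of $\ulu_z$ lies in $B_1\cup\cdots\cup B_{nr}$ and meets each of these blocks. Hence \Cref{lem:top-block-carry-free} shows that $\ulu$ satisfies the $(g,R,T,n)$-rigid condition. Finally, \Cref{lem:block-interleaving-identity} identifies $\calS_n(\ulu_z)$ with $\sigma_z$ for the family $\lambda_w=A_w$. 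This gives $\calS_n(\ulu)\neq0$ together with the stated valuation bound.

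The main point to verify carefully is the identification of $A_{w^*}$, the common value from \Cref{lem:block-position-invariance}, with the aggregate coefficient of a digit vector actually representing a point of $V\cap[0,R_*]$. This is exactly what the placement statement in \Cref{lem:separated-placements} provides, since it works for every choice $j_1<\cdots<j_r$, in particular $j_i=i$. The remaining steps are direct citations.
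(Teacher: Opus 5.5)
Your proposal is correct and follows the paper's proof essentially step for step. You bound $A_{w_*}$ via \Cref{lem:separated-placements} and \eqref{eq:small-truncation-coefficient}, apply \Cref{lem:shuffle-power-coefficient} with $\lambda_w=A_w$, and then conclude with \Cref{lem:block-interleaving-identity} and \Cref{lem:top-block-carry-free} applied to $\ulu_z$.
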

\begin{proof}
Let $\calL$ be the alphabet of nonzero digit patterns of length $s$, and consider the family $(A_w)_{w\in\calL^r}$ introduced after \Cref{def:words}. Let $w_*\coloneqq(\alpha^{(1)},\cdots,\alpha^{(r)})\in\calL^r$ be the word formed by the patterns of \Cref{ass:standing}. By \Cref{lem:separated-placements}, the digit vector with $\alpha^{(i)}$ placed on $B_i$ for $1\leq i\leq r$ $T$-represents a point of $V\cap(-\infty,R_*]=V\cap[0,R_*]$, so the inequality \eqref{eq:small-truncation-coefficient} gives $A_{w_*}\neq0$ and $v_p(A_{w_*})<R_*+1$. In particular, the family $(A_w)_{w\in\calL^r}$ is not identically zero, and the minimum of $v_p(A_w)$ over the words $w$ with $A_w\neq0$ is less than $R_*+1$. \Cref{lem:shuffle-power-coefficient}, applied with $\lambda_w=A_w$, provides $z\in\calL^{nr}$ such that
$$\sigma_z=\sum_{(I_1,\cdots,I_n)}\prod_{i=1}^nA_{z|_{I_i}}$$
is nonzero of valuation at most $n(R_*+1)+C(n,r)$. Put $\ulu\coloneqq\ulu_z$. Then $\calS_n(\ulu)=\sigma_z$ by \Cref{lem:block-interleaving-identity}, and $\ulu$ satisfies the $(g,R,T,n)$-rigid condition by \Cref{lem:top-block-carry-free}.
\end{proof}

Now we are ready to prove \Cref{prop:uniform-polynomial-bound}.
\begin{proof}[Proof of \Cref{prop:uniform-polynomial-bound}]
Let $f$, $P$ and $B$ be as in the statement, and put
$$n\coloneqq\deg(P),\  r\coloneqq\bfr(f),\  g\coloneqq f_{\leq B},\  V\coloneqq\opn{Supp}(g),\  R\coloneqq B,\  R_*\coloneqq\bfB(f),$$
and let $C(n,r)$ be the constant of \Cref{lem:shuffle-power-coefficient}. 
We check that \Cref{ass:standing} is satisfied. Since $f$ is integral and nonzero, $v_p(f)\geq0$, so that $V\subseteq[0,B]$ and $\bfB(f)\geq v_p(f)\geq0$ by \Cref{def:essential-truncation}; together with the hypothesis $B>\bfB(f)$, this gives $0\leq R_*\leq R$. The coefficient function of $g=f_{\leq B}$ is QTR by \Cref{lem:local-qtr-full}. Finally, $\opn{ot}(V)<\omega^{r+1}$ by \Cref{lem:14033}, and $V\cap[0,R_*]=\opn{Supp}(f_{\leq\bfB(f)})$ has order type at least $\omega^r$ by \Cref{def:essential-truncation}.

By \Cref{lem:bounded-separated-coefficient}, there exists $\ulu\in\bbP$ satisfying the $(g,R,T,n)$-rigid condition with $\calS_n(\ulu)\neq0$ and $v_p(\calS_n(\ulu))\leq n(\bfB(f)+1)+C(n,r)$. And the result follows from applying \Cref{lem:truncation-coset} to the monic polynomial $P\in\bfZ_p[X]$:%
$$v_p(P(f_{\leq B}))=v_p(P(g))\leq v_p(\calS_n(\ulu))\leq n(\bfB(f)+1)+C(n,r),$$
\end{proof}

\section{Main theorem and its application}\label{sec:main}
\subsection{Finite-rank exclusion}\label{sec:finite-rank}

We apply the uniform estimate of $v_p(P(f_{\leq B}))$ to a growing truncation of a
$\bfQ_p$-algebraic element $f\in\bfL_p$. A smaller truncation fixes the upper bound; the polynomial
relation forces the valuation to exceed that bound.

\begin{theorem}\label{lem:finite-rank-exclusion}
Let $f\in\bfL_p$ be algebraic over $\bfQ_p$. If
$\operatorname{ot}(\operatorname{Supp}(f))<\omega^\omega$, then
$\operatorname{ot}(\operatorname{Supp}(f))\leq\omega$.
\end{theorem}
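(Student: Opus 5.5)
We argue by contradiction. If $f=0$ there is nothing to prove. So assume $f\neq0$ with $\operatorname{ot}(\operatorname{Supp}(f))<\omega^\omega$. By \Cref{lem:14033}(3) it suffices to show $\bfr(f)=0$, so suppose $\bfr(f)>0$.

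First we reduce to the setting of \Cref{prop:uniform-polynomial-bound}, namely $f\in\calO_{\overline{\bfQ}_p}$ with monic minimal polynomial in $\bfZ_p[X]$. Multiply $f$ by $p^{m}$ for a large integer $m$: this shifts the support by $m$, preserves order types, and preserves algebraicity over $\bfQ_p$. Taking $m$ with $m+v_p(f)\geq0$ does not by itself make $f$ integral over $\bfZ_p$. Instead we take $m$ large enough to clear denominators of the minimal polynomial: if $P_0$ is the monic minimal polynomial of $f$ over $\bfQ_p$, then for $m\gg0$ the element $p^mf$ has monic minimal polynomial $p^{mn}P_0(X/p^m)\in\bfZ_p[X]$. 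Since $\bfZ_p$ is integrally closed and $p^mf$ is integral, $p^mf\in\calO_{\overline{\bfQ}_p}$. The truncations shift accordingly, so $\bfr(p^mf)=\bfr(f)$ and $\bfB(p^mf)=\bfB(f)+m$. We therefore replace $f$ by $p^mf$ and let $P\in\bfZ_p[X]$ be its monic minimal polynomial, of degree $n\geq1$.

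Next we apply \Cref{prop:uniform-polynomial-bound}. For every integer $B>\bfB(f)$,
\[
 v_p(P(f_{\leq B}))\leq n(\bfB(f)+1)+C(n,\bfr(f))=:K,
\]
and $K$ is independent of $B$. On the other side, $P(f)=0$, so
\[
 P(f_{\leq B})=P(f_{\leq B})-P(f)=(f_{\leq B}-f)\,Q(f_{\leq B},f)
\]
with $Q$ having $\bfZ_p$-coefficients. Both $f$ and $f_{\leq B}$ have nonnegative valuation, since $v_p(f)\geq0$, so $v_p(Q(f_{\leq B},f))\geq0$. Also $v_p(f-f_{\leq B})>B$, because every exponent in the support of $f-f_{\leq B}$ exceeds $B$. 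Hence $v_p(P(f_{\leq B}))>B$. Choosing an integer $B>\max(\bfB(f),K)$ contradicts the uniform bound, so $\bfr(f)=0$.

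The main obstacle is the integrality reduction: the proposition requires $f$ integral with a monic $\bfZ_p$-minimal polynomial, and one must check that rescaling by $p^m$ transforms $\bfr$ and $\bfB$ compatibly. The paper's definitions index truncations by integers $B$, which integer shifts respect. Everything else follows directly from \Cref{prop:uniform-polynomial-bound} together with the elementary lower bound on $v_p(P(f_{\leq B}))$.
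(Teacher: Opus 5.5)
Your proof is correct and follows the paper's argument: you rescale by a power of $p$ to make $f$ integral, then play the uniform bound of \Cref{prop:uniform-polynomial-bound} against the lower bound $v_p(P(f_{\leq B}))\geq v_p(f-f_{\leq B})>B$ that comes from $P(X)-P(Y)=(X-Y)Q(X,Y)$. One small correction: your remark that $v_p(p^mf)\geq0$ does not by itself make $p^mf$ integral over $\bfZ_p$ is false, since the integral closure of $\bfZ_p$ in $\overline{\bfQ}_p$ is exactly the set of elements of nonnegative valuation (this is what the paper uses), but your denominator-clearing detour is also valid, so the argument is unaffected.
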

\begin{proof}
The case $f=0$ is immediate. Multiplying a nonzero $f$ by a sufficiently
large integral power of $p$ translates its support into $[0,\infty)$
without changing its order type or algebraicity. We may therefore
assume $f\in\calO_{\overline{\bfQ}_p}$.
Its monic minimal polynomial $P\in\bfQ_p[X]$ then belongs to
$\bfZ_p[X]$.

If $\bfr(f)=0$, the conclusion follows from \Cref{lem:14033}.
Otherwise, choose an integer $B$ such that
$$B>\deg(P)\cdot(\bfB(f)+1)+C(\deg(P),\bfr(f)),$$ 
where $C(\cdot,\cdot)$ is the function in \Cref{prop:uniform-polynomial-bound}.
Since $\bfB(f)\geq0$ and $C(\deg(P),\bfr(f))\geq0$, we have $B>\bfB(f)$.

By \Cref{prop:uniform-polynomial-bound}, we have 
\begin{equation}\label{eq:29507}
 v_p(P(f_{\leq B}))\leq \deg(P)(\bfB(f)+1)+C(\deg(P),\bfr(f))<B.
\end{equation}
On the other hand, write
$P(X)-P(Y)=(X-Y)Q(X,Y)$ with $Q\in\bfZ_p[X,Y]$.
Both $f$ and $f_{\leq B}$ are integral, and $P(f)=0$, whence
\[
 v_p(P(f_{\leq B}))
 =v_p\bigl((f_{\leq B}-f)Q(f_{\leq B},f)\bigr)
 \geq v_p(f_{\leq B}-f)>B.
\]
This contradicts the inequality \eqref{eq:29507}, proving the theorem.
\end{proof}

Together with \Cref{prop:external-bounds}, this proves \Cref{thm:main}. As a quick corollary, \Cref{lem:finite-rank-exclusion} implies the following consequence,
which was previously discussed in \cite{wang2026padichahnseriessparse}.%
\begin{proposition}\label{prop:19985}
    Let $f$ be a $\bfQ_p$-algebraic element in $\bfL_p$. If the support of $f$ is bounded, then the support of $f$ is finite.
\end{proposition}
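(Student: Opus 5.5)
We plan to deduce the statement from \Cref{lem:finite-rank-exclusion} together with \Cref{prop:external-bounds}, by showing that a bounded support of order type at most $\omega$ must be finite. The case $f=0$ is trivial, so assume $f\neq0$ is $\bfQ_p$-algebraic with $\opn{Supp}(f)\subseteq(-\infty,R]$ for some $R$.

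First, by \Cref{prop:external-bounds}, $\opn{ot}(\opn{Supp}(f))\leq\omega^\omega$. We need to exclude the value $\omega^\omega$ in the bounded case before invoking \Cref{lem:finite-rank-exclusion}. Choose an integer $B\geq R$; then $f=f_{\leq B}$. By \Cref{lem:local-qtr-full}, $\opn{Coeff}(f_{\leq B},\cdot)$ is QTR with some data $(a,b,c,M,N)$. Since every point of a QTR support has digit sum at most $c$, it has at most $c$ nonzero digits and hence at most $c$ long gaps. Then \Cref{lem:long-gap-criterion}, applied to the bounded QTR set $\opn{Supp}(f)$ with $k=c+1$, gives $\opn{ot}(\opn{Supp}(f))<\omega^{c+1}<\omega^\omega$.

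Second, apply \Cref{lem:finite-rank-exclusion} to obtain $\opn{ot}(\opn{Supp}(f))\leq\omega$. The key point is that the order type cannot equal $\omega$ exactly. Suppose it did; then $\opn{Supp}(f)$ is an increasing sequence $q_1<q_2<\cdots$ bounded by $R$. Every initial segment of type $\omega$ of a bounded set has a supremum, so it converges to some real number $\le R$. Since the order type is $\omega$, every integer truncation $f_{\leq B'}$ has finite support if $B'<\sup$, and the whole support lies below $R$. Thus $\opn{ot}(\opn{Supp}(f_{\leq R}))=\omega\geq\omega^1$, so $\bfr(f)\geq1$. But \Cref{lem:finite-rank-exclusion} was proved by showing, via \Cref{prop:uniform-polynomial-bound}, that $\bfr(f)>0$ is impossible for algebraic $f$ with order type $<\omega^\omega$.

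Concretely, rather than relying on the statement of \Cref{lem:finite-rank-exclusion}, we will rerun its proof. After multiplying by a power of $p$ we may assume $f$ is integral, and the minimal polynomial $P\in\bfZ_p[X]$. The argument there derives a contradiction from $\bfr(f)>0$ and uses only that the order type is $<\omega^\omega$. Hence $\bfr(f)=0$, so every integer truncation has finite support. Taking the truncation at $R$, which equals $f$, shows that $\opn{Supp}(f)$ is finite.

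The only subtle step is making sure that $\bfr(f)=0$, rather than merely $\opn{ot}\leq\omega$, is what the proof of \Cref{lem:finite-rank-exclusion} actually yields. This is indeed the case: its contradiction is obtained precisely under the assumption $\bfr(f)>0$.
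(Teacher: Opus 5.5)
Your argument is correct, but both of its steps differ from the paper's proof. The paper treats \Cref{lem:finite-rank-exclusion} as a black box. It replaces $f$ by $f+p^{R+1}$ (with $R$ your bound on the support), whose support is $\opn{Supp}(f)\cup\{R+1\}$ and therefore has successor order type. \Cref{prop:external-bounds} then gives order type $<\omega^\omega$, since $\omega^\omega$ is a limit ordinal. \Cref{lem:finite-rank-exclusion} gives $\leq\omega$, and a successor ordinal $\leq\omega$ is finite.

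Your route is different in two places. First, you get $\opn{ot}(\opn{Supp}(f))<\omega^{c+1}$ directly from \Cref{lem:local-qtr-full} and \Cref{lem:long-gap-criterion}, so your appeal to \Cref{prop:external-bounds} is not actually needed. Second, you extract from the proof of \Cref{lem:finite-rank-exclusion} the stronger conclusion $\bfr(f)=0$: every integer truncation of $f$ has finite support, and $f=f_{\leq B}$ for an integer $B\geq R$. This is legitimate, because that proof derives its contradiction exactly from $\bfr(f)>0$, and scaling by $p^k$ with $k\in\bfZ$ does not change $\bfr(f)$. In the bounded case the contradiction is even more direct after scaling: for an integer $B\geq R$ with $B>\bfB(f)$ one has $P(f_{\leq B})=P(f)=0$, whereas \Cref{prop:uniform-polynomial-bound} forces $P(f_{\leq B})\neq0$.

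As for what each buys: the paper's route uses only the theorem's statement plus a one-line trick. Yours avoids Kedlaya's $\omega^\omega$ bound and yields the stronger fact $\bfr(f)=0$, at the cost of relying on the proof rather than the statement.

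Two small points. Your paragraph showing that order type $\omega$ would force $\bfr(f)\geq1$ is redundant once you have $\bfr(f)=0$. Also, the truncation should be taken at an integer $B\geq R$ rather than at $R$, since $W_f$ and $\bfr(f)$ are defined through integer truncation points.
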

\begin{proof}
    Suppose $f\in\bfL_p$ is $\bfQ_p$-algebraic and $\opn{Supp}(f)\subseteq (-\infty,M]$ for some $M\in\bfQ$. Then $f+p^{M+1}$ is also $\bfQ_p$-algebraic, and $\opn{ot}(\opn{Supp}\left(f+p^{M+1}\right))$ is a successor ordinal. By \Cref{prop:external-bounds}, this successor ordinal is at most $\omega^\omega$, hence strictly less than $\omega^\omega$. By \Cref{lem:finite-rank-exclusion}, it is at most $\omega$; being a successor, it must be finite. Hence $\opn{Supp}\left(f+p^{M+1}\right)$ is finite, and so is $\opn{Supp}(f)$.
\end{proof}

By Krasner's lemma, we have an explicit description of $\bfQ_p$-algebraic elements with small support ordinals.
\begin{proposition}\label{prop:57854}
Let $f\in\bfL_p$ be algebraic over $\bfQ_p$. Then
$\operatorname{ot}(\operatorname{Supp}(f))\leq\omega$ if and only if
\[
 f\in\bigcup_{T=1}^{\infty}\bfQ_p^{\operatorname{un}}
                  \left(p^{1/T}\right).
\]
\end{proposition}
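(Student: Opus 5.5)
Both directions come down to a single structural fact: for a fixed $T\geq1$, every element of $\bfQ_p^{\opn{un}}(p^{1/T})$ has a Hahn expansion whose support lies in $T^{-1}\bfZ$ and is bounded below.

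\emph{The ``if'' direction.} Let $x\in\bfQ_p^{\opn{un}}(p^{1/T})$. The field $\breve{\bfQ}_p(p^{1/T})$ is a complete discretely valued field with uniformizer $p^{1/T}$ and residue field $\overline{\bfF}_p$. Hence $x$ has a convergent expansion
\[
x=\sum_{l\geq l_0}[c_l]\,p^{l/T},\qquad c_l\in\overline{\bfF}_p .
\]
We obtain it by successive approximation: at each step, subtract the Teichm\"uller lift of the leading residue times the appropriate power $p^{l/T}$. Here $p^{l/T}$ is the fixed root that embeds as $p^{l/T}\in\bfL_p$. By uniqueness of the standard expansion in $\bfL_p$, this is the Hahn expansion of $x$. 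Its support sits inside $T^{-1}\bfZ\cap[l_0/T,\infty)$, whose order type is at most $\omega$.

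\emph{The ``only if'' direction.} Assume $\opn{ot}(\opn{Supp}(f))\leq\omega$. If the support is finite, then $f$ is a finite sum $\sum[r_q]p^q$. Each $r_q$ lies in a finite field, so $[r_q]\in\bfQ_p^{\opn{un}}$, and $f$ lies in $\bfQ_p^{\opn{un}}(p^{1/T})$ with $T$ the lcm of the denominators.

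Now suppose the support is an increasing sequence $q_1<q_2<\cdots$.

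\textbf{Step 1: the sequence is unbounded.} If it were bounded, \Cref{prop:19985} would make the support finite, a contradiction. So $q_k\to\infty$.

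\textbf{Step 2: approximate by finite truncations.} Let $f_k=\sum_{i\leq k}[r_{q_i}]p^{q_i}$. Each $f_k$ lies in some $\bfQ_p^{\opn{un}}(p^{1/T_k})$, and
\[
v_p(f-f_k)=q_{k+1}\to\infty .
\]

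\textbf{Step 3: apply Krasner's lemma.} Since $f$ is algebraic over $\bfQ_p$, it is separable of some degree $d$. Let $\delta=\min_{\sigma(f)\neq f}v_p(f-\sigma(f))$, the minimum over conjugates of $f$ different from $f$. Choose $k$ with $q_{k+1}>\delta$. Then $|f-f_k|<|f-\sigma(f)|$ for every such conjugate, so Krasner's lemma gives $f\in\bfQ_p(f_k)\subseteq\bfQ_p^{\opn{un}}(p^{1/T_k})$.

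One technical point in Step 3: Krasner's lemma needs conjugates over the base field. We take them over $\bfQ_p$ inside $\overline{\bfQ}_p\subset\bfC_p\hookrightarrow\bfL_p$. Since the embedding is isometric, valuations computed in $\bfL_p$ agree with those in $\bfC_p$, and $f_k$ is algebraic, lying in $\overline{\bfQ}_p$.

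\textbf{Main obstacle.} The hard input is the unboundedness in Step 1, which rests entirely on \Cref{prop:19985}. A second point to verify is that the Teichm\"uller lifts $[r]$ and the chosen roots $p^{q}$ really lie in the image of $\bfQ_p^{\opn{un}}(p^{1/T})$ under the fixed embedding. For the roots this is the paper's convention. For the Teichm\"uller lifts of elements of $\overline{\bfF}_p$, it holds because they are roots of unity of order prime to $p$.
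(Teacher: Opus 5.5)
Your argument is correct and is essentially the paper's proof. The ``if'' direction uses the expansion in a discretely valued field with uniformizer $p^{1/T}$; you work in $\breve{\bfQ}_p(p^{1/T})$ where the paper uses the finite extension $\operatorname{Frac}W(\bfF_{p^s})(p^{1/T})$, which makes no difference here. The ``only if'' direction goes via \Cref{prop:19985}, finite truncations and Krasner's lemma, exactly as in the paper.

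There is one slip in Step 3. Krasner's hypothesis $|f-f_k|<|f-\sigma(f)|$ for all conjugates $\sigma(f)\neq f$ means $v_p(f-f_k)>v_p(f-\sigma(f))$ for every such conjugate. So $\delta$ must be the \emph{maximum} of $v_p(f-\sigma(f))$, not the minimum. Since $q_{k+1}\to\infty$, a suitable $k$ still exists, and the rest of the argument goes through unchanged.
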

\begin{proof}
Let $F\coloneqq \bigcup_{T=1}^{\infty}\bfQ_p^{\operatorname{un}}\left(p^{1/T}\right)$. Note that $$\bfQ_p^{\operatorname{un}}
 =\bigcup_{s\geq1}\operatorname{Frac}W(\bfF_{p^s})
,$$
 an element of $F$ belongs to
$\operatorname{Frac}W(\bfF_{p^s})(p^{1/T})$ for some integers $s,T\geq1$.
This finite extension of $\bfQ_p$ has uniformizer $p^{1/T}$ and residue field
$\bfF_{p^s}$, so its elements have standard expansions
\[
 \sum_{i=i_0}^{\infty}[a_i]p^{i/T},\  i_0\in\bfZ,\quad a_i\in\bfF_{p^s}.
\]
Their supports have order type at most $\omega$.

Conversely, it makes no harm to assume that the support has order type $\omega$. By \Cref{prop:19985}, the support is unbounded, hence for every integer $m$, the bounded truncation $f_{\leq m}$ is finite, belongs to $F$, and $v_p(f-f_{\leq m})>m$. Since $f_{\leq m}$ tends to $f$ with $m\lto\infty$,  Krasner's lemma (cf. \cite[Chapter 3, Section 3.4.2]{boschNonArchimedeanAnalysisSystematic1984a}) implies that $\bfQ_p(f)\subseteq\bfQ_p(f_{\leq m})\subset F$, and the result follows.
\end{proof}

\subsection{Bi-algebraicity of the shadow map}
Recall the shadow map of \Cref{def:shadow-map} and the homeomorphism
of completed algebraic closures in \Cref{prop:25416}.
Note that the image of $\bfF_p\pparen{t}$ under $\Theta$ is exactly the field $\bfQ_p$. \Cref{prop:25416} shows that, up to taking completions, $\Theta$ preserves algebraicity over the respective fields $\bfF_p\pparen{t}$ and $\bfQ_p$. It is therefore natural to ask whether $\Theta$ preserves algebraicity without taking completions. 

In \cite{wang2025padictranscendencesumk1inftyp1pk}, we give a negative answer to this question in full generality by constructing an element $f\in\bfC_p\backslash\overline{\bfQ}_p$ such that $\Theta^{-1}(f)$ is algebraic over $\bfF_p\pparen{t}$. It remains to determine the exact set of elements $f\in\bfL_p^\flat$ such that $f$ is algebraic over $\bfF_p\pparen{t}$ and $\Theta(f)$ is algebraic over $\bfQ_p$. The following proposition gives a complete answer to this question.

\begin{proposition}\label{prop:bialgebraic}
    Let $f\in\bfL_p^\flat$ be a Hahn series. Then the following are equivalent:
    \begin{enumerate}
        \item $f$ is algebraic over $\bfF_p\pparen{t}$ and $\Theta(f)$ is algebraic over $\bfQ_p$;
        \item $\Theta(f)$ belongs to the field $\bigcup_{T=1}^\infty\bfQ_p^{\opn{un}}\left(p^{1/T}\right)$;
        \item $\Theta(f)$ is algebraic over $\bfQ_p$ and $\opn{ot}(\opn{Supp}(\Theta(f)))\leq \omega$.
    \end{enumerate}
\end{proposition}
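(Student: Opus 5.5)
I will prove the cycle of implications (1)$\Rightarrow$(3)$\Rightarrow$(2)$\Rightarrow$(1). The first two steps rest on earlier results; the third uses Lisinski's theorem.

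For (1)$\Rightarrow$(3), assume $\Theta(f)$ is $\bfQ_p$-algebraic. By \Cref{thm:main}, the order type of $\opn{Supp}(\Theta(f))$ is finite, $\omega$, or $\omega^\omega$, so it suffices to exclude $\omega^\omega$. Since $\Theta$ preserves supports, $\opn{Supp}(\Theta(f))=\opn{Supp}(f)$. As $f$ is algebraic over $\bfF_p\pparen{t}$, it is algebraic over $\overline{\bfF}_p\pparen{t}$, so by \Cref{prop:54845} its coefficient function is QTR with some data $(a,b,c,M,N)$. The support is contained in $S_{a,b,c}$, so every point has at most $c$ nonzero digits and hence at most $c$ long gaps. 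To bound the order type, I would rerun the class-partition argument from the converse direction of \Cref{lem:long-gap-criterion}, now without the boundedness assumption. On each bounded truncation $\opn{Supp}(f)\cap(-\infty,R]$ there are finitely many classes, each of order type at most $\omega^c$. The whole set is the increasing union over $R$ of these truncations, so its order type is at most $\omega^{c}\cdot\omega=\omega^{c+1}<\omega^\omega$. Hence the order type is not $\omega^\omega$, and by \Cref{thm:main} it is at most $\omega$.

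The implication (3)$\Rightarrow$(2) is exactly \Cref{prop:57854}.

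For (2)$\Rightarrow$(1), $\Theta(f)$ lies in a finite extension of $\bfQ_p$, so it is algebraic. It remains to show that $f$ is algebraic over $\bfF_p\pparen{t}$. Write $\Theta(f)\in \opn{Frac}W(\bfF_{p^s})(p^{1/T})$. Its standard expansion is $\sum_{i\geq i_0}[a_i]p^{i/T}$ with $a_i\in\bfF_{p^s}$, so $f=\sum_{i\geq i_0}a_it^{i/T}$ lies in $\bfF_{p^s}\pparen{t^{1/T}}$. This field is a finite extension of $\bfF_p\pparen{t}$, being obtained by adjoining $t^{1/T}$ and the constants of $\bfF_{p^s}$. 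Therefore $f$ is algebraic over $\bfF_p\pparen{t}$.

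This direct argument appears sufficient, so Lisinski's theorem \cite[Theorem 4.5]{lisinskiApproximationAlgebraicityPositive2023} is not needed for (2)$\Rightarrow$(1). I would keep it in reserve as an alternative route for (1)$\Rightarrow$(3), in case the order-type bound for unbounded QTR supports needs more care.

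The main obstacle is that bound in (1)$\Rightarrow$(3). \Cref{lem:long-gap-criterion} is stated only for bounded sets, so I must check that the number of long gaps is bounded uniformly in $m$. This holds because the number of nonzero digits is bounded by $c$ independently of $m$. I must also check that a well-ordered union of countably many bounded pieces, each of order type less than $\omega^{c+1}$, stays below $\omega^{c+2}$. Both checks are routine ordinal arithmetic.
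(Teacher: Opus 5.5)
Your proof is correct. For (3)$\Rightarrow$(2) and (2)$\Rightarrow$(1) it coincides with the paper's proof: the paper cites \Cref{prop:57854} for the first and uses the same direct argument via Laurent series in $t^{1/T}$ over $\mathbf{F}_{p^s}$ for the second.

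The difference is in (1)$\Rightarrow$(3).
\begin{itemize}
\item The paper gets $\operatorname{ot}(\operatorname{Supp}(f))<\omega^\omega$ by citing Lisinski's theorem, and then applies \Cref{lem:finite-rank-exclusion}.
\item You derive the bound yourself from Kedlaya's QTR criterion (\Cref{prop:54845}) and the class-partition half of \Cref{lem:long-gap-criterion}. You then exclude $\omega^\omega$ via the trichotomy of \Cref{thm:main}, which amounts to invoking \Cref{lem:finite-rank-exclusion}.
\end{itemize}

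Your derivation works, and the steps you flag as needing care do go through:
\begin{itemize}
\item Points of $S_{a,b,c}$ have at most $c$ nonzero digits, hence at most $c$ long gaps.
\item Each integer truncation $\operatorname{Supp}(f)\cap(-\infty,R]$ is again a bounded QTR set with the same data, since $q\leq R$ if and only if $m\leq aR$, as observed in the proof of \Cref{lem:separated-placements}. In any case, that direction of the lemma only uses containment in $S_{a,b,c}$ and boundedness. So each truncation has order type $<\omega^{c+1}$.
\item These truncations are initial segments exhausting the support. The order type of the support is therefore their supremum, hence at most $\omega^{c+1}$.
\end{itemize}
Your first estimate $\omega^{c+1}$ is thus the correct one; the weaker $\omega^{c+2}$ in your last paragraph also suffices.

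Your route makes the argument self-contained and gives an explicit bound in terms of the QTR data. It is exactly the ``quick corollary of Kedlaya's Theorem 15'' that the paper's footnote alludes to. The paper's route is shorter, at the price of an external reference.
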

\begin{proof}
    The equivalence of the second and the third assertions is exactly \Cref{prop:57854}. Now assume the first assertion. Lisinski proved\footnote{Although this result was first explicitly stated by Lisinski in \cite{lisinskiApproximationAlgebraicityPositive2023} in 2023, it is actually a quick corollary of \cite[Theorem 15]{kedlayaAlgebraicClosurePower2001} by Kedlaya in 2001. The $p$-adic analogue of this result (cf. \Cref{prop:external-bounds}) was asserted by Kedlaya in \cite[Section 4]{kedlayaPowerSeriesPAdic2001}.} in \cite[Theorem 4.5]{lisinskiApproximationAlgebraicityPositive2023} that $\opn{ot}(\opn{Supp}(f))=\opn{ot}(\opn{Supp}(\Theta(f)))$ is strictly less than $\omega^\omega$. Since $\Theta(f)$ is algebraic over $\bfQ_p$, \Cref{lem:finite-rank-exclusion} implies that $\opn{ot}(\opn{Supp}(\Theta(f)))\leq \omega$. Hence the first assertion implies the third one. 
    
    Conversely, if the second assertion holds, then $\Theta(f)$ can be written in the form
    $$\Theta(f)=\sum_{i=s}^\infty[f_i]p^{i/T},$$
    where $T>0$, $s$ are integers and there exists an integer $r\geq1$ such that $f_i\in \bfF_{p^r}$ for every $i\geq s$. As a result, $f=\sum_{i=s}^\infty f_i t^{i/T}$ lies in the field $\bfF_{p^r}\pparen{t^{1/T}}$, which is algebraic over $\bfF_p\pparen{t}$. Hence the second assertion implies the first one.
\end{proof}

\printbibliography
\end{document}